\numberwithin{equation}{section}
\def \E{\mbox{E}}
\newtheorem{theo}{Theorem}[section]
\newtheorem{lemma}[theo]{Lemma}
\newtheorem{cor}[theo]{Corollary}
\newtheorem{prop}[theo]{Proposition}
\newtheorem{Assumption}{Assumption}
\def\beq{\begin{eqnarray*}}
\def\eeq{\end{eqnarray*}}
\newcommand{\Dkonv}{\stackrel{\mathcal{D}}{\longrightarrow}}
\newcommand{\Pkonv}{\stackrel{P}{\longrightarrow}}
\newcommand{\e}{\text{E}}
\newcommand{\nn}{\nonumber}
\newcommand{\ov}{\overline}
\newcommand{\Iwkd}{I\{ \hat w_k \geq \alpha \}}
\newenvironment{proof}{\noindent\sc Proof.\quad\rm}{
    \hspace*{\fill} $\Box$ \vspace{2ex} }
\begin{document}

\title{Testing exogeneity in the functional linear regression model}

\author{Manuela Dorn \\ 
{\small \sl Department of Mathematics, Physics, and Computer Science, University of Bayreuth}\\{\small \sl manuela.dorn@uni-bayreuth.de }\and Melanie Birke\\
{\small \sl Department of Mathematics, Physics, and Computer Science, University of Bayreuth}\\{\small \sl melanie.birke@uni-bayreuth.de} \and Carsten Jentsch\\{\small \sl Department of Statistics, TU Dortmund University}\\{\small \sl jentsch@statistik.tu-dortmund.de}
}

\maketitle

\begin{abstract}
We propose a novel test statistic for testing exogeneity in the functional linear regression model. In contrast to Hausman-type tests in finite dimensional linear regression setups, a direct extension to the functional linear regression model is not possible. Instead, we propose a test statistic based on the sum of the squared difference of projections of the two estimators  for testing the null hypothesis of exogeneity in the functional linear regression model. We derive asymptotic normality under the null and consistency under general alternatives. Moreover, we prove bootstrap consistency results for residual-based bootstraps. In simulations, we investigate the finite sample performance of the proposed testing approach and illustrate the superiority of bootstrap-based approaches. In particular, the bootstrap approaches turn out to be much more robust with respect to the choice of the regularization parameter.
\end{abstract}

AMS 2010 Classification: Primary: 62R10, 62F05, 62E20, 62J20; Secondary: 62F40

Keywords and Phrases:  {asymptotic theory, bootstrap infe

\section{Introduction}

In functional linear regression models, goodness-of-fit tests are much more complicated to construct then e.g.~in the multiple linear setting. This, among others stems to the fact that in functional linear regression models the $L_2$-distance of the slope function estimator to the true function has no proper limiting distribution. This was shown in \cite{Car06} and \cite{Ruy00} for two estimators in the classical functional linear regression model under exogeneity. It turns out, that the lack of a proper limiting distribution also applies for other estimators using different model assumptions. This phenomenon inherent to functional data setups is probably one of the reasons why goodness-of-fit testing is generally not that widely developed for functional regression models yet. In particular, desirable counterparts of standard tests that are well-established in the multiple linear model are still missing in the functional linear setting.

In functional data settings, existing goodness-of-fit tests are described in \cite{MSM05}, who use a suitable scalar product to transform the functions to a different space using the autocovariance operator to obtain a test statistic having a proper limiting distribution. Further approaches are given in \cite{Cues18} and \cite{Gar14,Gar20}, 
who use random projections together with empirical process techniques.

In practice, one important model assumption is the exogeneity of the regressor. Especially in economics, this assumption is often violated such that the regressors are correlated with the error terms which leads to endogeneity issues. Estimating in such a model is an inverse problem. Neglecting endogeneity generally results in inconsistent estimators. Hence, it is important to test the data for exogeneity first. If the null hypothesis of exogeneity is rejected, different estimators such as e.g.~instrumental variable (IV) estimators are required to achieve consistent estimation. See \cite{Joh16}, \cite{Flo11} or \cite{FV14}, who consider such IV estimators in functional regression setups and derive asymptotic theory. While in the multiple linear regression model the Hausman test (see \cite{Hau78} und \cite{Wu73})
is a standard and natural approach for testing exogeneity, this method cannot be transferred directly to the functional linear model since it is based on the $L_2$-distance of two slope function estimators due to the following proposition which transfers the results in \cite{Car06} and \cite{Ruy00} to the present setting.

In the following, let $\hat\beta$ denote the estimator
 of the slope function in the exogeneous model described in \cite{Joh13} and \cite{Joh13add}, which is consistent under exogeneity, but inconsistent under endogeneity, and by $\hat\beta_{IV}$
 the IV estimator in the endogeneous functional linear model given in \cite{Joh16}, which is consistent in both cases. Then, we get the following result.

\begin{prop}
\label{noasyd}
In the functional linear regression model \eqref{eq:model} defined below, even under exogeneity, there is no random variable $Z$ with non-degenerate distribution, such that
\begin{equation*}
s_n \| \hat\beta_{IV} - \hat \beta \| \stackrel{\mathcal D}{\to} Z
\end{equation*}
for some real sequence $(s_n)_{n\in \mathbb N}$ with $lim_{n \to \infty} s_n = \infty$, where $||\cdot||$ denotes the norm of the Hilbert space.
\end{prop}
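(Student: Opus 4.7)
The plan is to argue by contradiction, adapting the strategy used in \cite{Car06} and \cite{Ruy00} to preclude a non-degenerate weak limit of the $L^2$ error norm of a single regularized estimator in a functional inverse problem. Suppose there exist a sequence $s_n\to\infty$ and a non-degenerate random variable $Z$ such that $s_n\|\hat\beta_{IV}-\hat\beta\| \Dkonv Z$. Then $s_n\|\hat\beta_{IV}-\hat\beta\|$ is tight in $\mathbb{R}_{\geq 0}$ and, by non-degeneracy, bounded away from zero with positive limiting probability.

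The first step is to expand both estimators in the orthonormal eigenbasis $(\phi_k)_{k\geq 1}$ of the theoretical covariance operator of the regressor process, with eigenvalues $\lambda_k\downarrow 0$. The estimator $\hat\beta$ of \cite{Joh13}, \cite{Joh13add} and the IV estimator $\hat\beta_{IV}$ of \cite{Joh16} both arise as regularized solutions of ill-posed integral equations built from empirical covariance (respectively, empirical cross-covariance) operators. Substituting the spectral representations of these operators yields
\begin{equation*}
\hat\beta_{IV}-\hat\beta=\sum_{k=1}^{\infty}d_{n,k}\,\phi_k,\qquad \|\hat\beta_{IV}-\hat\beta\|^{2}=\sum_{k=1}^{\infty}d_{n,k}^{\,2},
\end{equation*}
where each coefficient $d_{n,k}$ decomposes into a deterministic regularization/bias term and a stochastic term. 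Since both estimators target $\beta$ under exogeneity, the deterministic parts cancel up to negligible order, and $d_{n,k}$ is asymptotically driven by its stochastic component.

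The second step is to establish, for each fixed $k$, a componentwise variance lower bound of the form $\Var(d_{n,k})\geq c_k/(n\lambda_{k}^{2})$ via a direct second-moment computation exploiting the distinct asymptotic covariance structures of $\hat\beta-\beta$ and $\hat\beta_{IV}-\beta$ at the $k$-th mode. Applying the continuous mapping theorem to the bounded linear projection $\langle\,\cdot\,,\phi_k\rangle$ and Chebyshev's inequality, tightness of $s_n\|\hat\beta_{IV}-\hat\beta\|$ forces $s_n^{2}/(n\lambda_{k}^{2})=O(1)$ for every $k\geq 1$. Since $\lambda_k\downarrow 0$, any such rate $s_n\to\infty$ fails this constraint along suitable indices $k=k(n)$; equivalently, the partial sums $\sum_{k=1}^{K}s_n^{2}\,d_{n,k}^{2}$ cannot remain tight as $K$ grows, which contradicts tightness of $s_n^2\|\hat\beta_{IV}-\hat\beta\|^{2}$ and rules out a non-degenerate limit $Z$.

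The main obstacle will be to make the componentwise variance lower bound $\Var(d_{n,k})\geq c_k/(n\lambda_{k}^{2})$ rigorous for the \emph{difference} of two regularized estimators that depend on distinct empirical operators and, possibly, distinct regularization sequences. A perturbation argument is required to align the empirical eigensystems of each estimator with the theoretical basis $(\phi_k,\lambda_k)$, and one must carefully control the cross-covariance between the stochastic parts of $\hat\beta-\beta$ and $\hat\beta_{IV}-\beta$ to exclude exact cancellation at any given mode under the null of exogeneity. This interaction, together with the bias cancellation uniformly in $k$, constitutes the technically demanding part of the argument.
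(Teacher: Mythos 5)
Your overall instinct --- pass to the spectral coefficients $d_{n,k}=\langle\hat\beta_{IV}-\hat\beta,\phi_k\rangle$, note that the truncation biases cancel exactly (both estimators use the same indicator $I\{\hat\lambda_k\geq\alpha\gamma_k^\nu\}$, so they share the same active set $\mathcal K_n$), and exploit that the remaining stochastic term at mode $k$ is essentially $\frac1n\sum_iU_iD_{i,k}$ with variance $\frac{\sigma^2}{n}\bigl(\frac{w_k}{|c_k|^2}-\frac1{x_k}\bigr)$ --- is the right starting point and is consistent with the Cardot--Mas--Sarda strategy that the paper defers to. Two smaller corrections first: the per-mode variance scales like $1/\lambda_k$ (times $1/n$), not $1/\lambda_k^{2}$; and since only the real random variable $s_n\|\hat\beta_{IV}-\hat\beta\|$ is assumed to converge (not the random element $s_n(\hat\beta_{IV}-\hat\beta)$ in $L^2$), you cannot invoke the continuous mapping theorem to obtain convergence of projections --- you may only transfer \emph{tightness} via the inequality $s_n|d_{n,k}|\leq s_n\|\hat\beta_{IV}-\hat\beta\|$.

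The genuine gap is in your concluding step. Tightness of $s_n|d_{n,k}|$ for each fixed $k$, combined with $\sqrt n\,d_{n,k}\Dkonv\mathcal N(0,v_k)$ where $v_k\asymp\frac{w_k}{|c_k|^2}-\frac1{x_k}$, only yields $\limsup_n s_n^2v_k/n<\infty$ for each $k$, hence at best $s_n^2/n\to0$. This does not contradict $s_n\to\infty$: a rate such as $s_n=\log n$ survives every componentwise constraint, and for such slow rates the correct conclusion is not that tightness fails but that the limit is \emph{degenerate} (a point mass at $0$ or at a positive constant), which is exactly what the proposition asserts must happen. Likewise, the claim that ``$\sum_{k\le K}s_n^2d_{n,k}^2$ cannot remain tight as $K$ grows'' is unsubstantiated: a diverging expectation of these partial sums is compatible with tightness unless you supply an anti-concentration (lower) bound, and whether $\frac{s_n^2}{n}\sum_{k\in\mathcal K_n}v_k$ diverges at all depends on the unknown interplay of $s_n$, the eigenvalue decay and the growth of $\mathcal K_n$. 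What is actually needed --- and what the argument of \cite{Car06} provides --- is a concentration statement for the full quadratic form: $\|\hat\beta_{IV}-\hat\beta\|^2=m_n\,(1+o_P(1))$ with a deterministic sequence $m_n\asymp\frac1n\sum_{k\in\mathcal K_n}\bigl(\frac{w_k}{|c_k|^2}-\frac1{x_k}\bigr)$, valid because the centering term dominates the stochastic fluctuation of order $t_n/n$ (the same mean-dominates-standard-deviation phenomenon as $\mathfrak R_n\gg t_n/n$ underlying Theorem \ref{CLTteststatistik}). Given this, one concludes by trichotomy on $\lim_n s_n^2m_n\in\{0\}\cup(0,\infty)\cup\{\infty\}$ that any distributional limit of $s_n\|\hat\beta_{IV}-\hat\beta\|$ is a constant. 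Without this concentration step your argument cannot be closed.
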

The proof of this result mainly goes along the lines of the one in \cite{Car06}, see also \cite{Dor21} for further details. This is why we just state the result here and use it as motivation for a different approach in the following. Motivated by the fact, that in contrast to the $L_2$-distance, the projection error typically has an asymptotic distribution (see e.g.~\cite{MSM05} and \cite{FV14}),
we propose to use the sum of the squared difference of
projections of the two estimators as test statistic.

The rest of the paper is organized as follows. In Section \ref{se:modtest}, we state the model assumptions, construct the test statistic and derive its asymptotic distribution. {As the limiting distribution turns out to depend on unknown functional nuisance parameters, which are difficult to estimate, we propose residual-based bootstrap methods in Section \ref{se:boot} and prove their consistency.} The finite sample performance of all discussed tests is  {investigated} in Section \ref{se:finite}. 
All longer proofs are deferred to the Appendix and additional auxiliary results to a supplement.

\section{Model and test statistic}
\label{se:modtest}
We consider the  {functional linear regression} model
\begin{equation}Y=\int_{[0,1]}X(t)\beta(t)dt+ U=\left\langle \beta,X\right\rangle+ U,\label{eq:model}
\end{equation}
where $Y$ is a real-valued random variable, $U$ is a real-valued error term with $E[U]=0$ and $E[U^2]=\sigma^2 {\in(0,\infty)}$, $X$ is a functional random variable with values in $L_2([0,1])$ such that $\int_0^1 E|X(t)|^2 \,dt < \infty $.  In this setup, the error variance $\sigma^2$ is unknown, and $\beta$ is an unknown function from the Sobolev space of periodically extendable square integrable functions denoted by
\begin{equation} 
\mathcal W_\nu := \Big\{ f \in L^2[0,1] :\; \|f
\|_\nu^2 := \sum_{k \in \mathbb Z} \gamma_k^\nu |\langle f,
\phi_k\rangle|^2 < \infty \Big\},
\label{Sobolevraum}
\end{equation}
where $(\phi_k)_{k\in\mathbb Z}$ is the Fourier basis of $L^2([0,1])$, $\nu\in\mathbb R$ and
$\gamma_k=1+|2\pi k|$, $k\in\mathbb Z$, see e.g. \cite{Neuba,Neubb}, \cite{MR96} or \cite{Tsyb04}. {In the setup of
\eqref{eq:model},} we will speak of \emph{exogeneity}  {(and call $X$ an exogenous regressor)}, if
\begin{equation}
\label{eq:exo}
H_0:\ \ E[X(t)U]=0\mbox{ for all }t\in[0,1].
\end{equation}
 {Otherwise, we will speak of \emph{endogeneity} (and call $X$ an endogeneous regressor), if}
\begin{equation}
\label{eq:endo}
H_1:\ \ E[X(t)U]\neq0\mbox{ for at least one }t\in[0,1].
\end{equation}
For consistent estimation in the endogeneous case, we assume to additionally have a functional instrumental variable $W$ with values in $L_2([0,1])$ such that $\int_0^1 E|W(t)|^2 \,dt < \infty $ and $E[UW(t)]=0$ for all $t\in[0,1]$. For the sake of simplicity, it is often assumed in the literature, that $E[X(t)]=E[W(t)]=0$ holds for all $t\in[0,1]$. However, the general case can be handled along the same lines by centering with the sample mean in a first step and our results are stated for the general case. For estimating the cross-covariance operator, we also assume that $(X,W)$ is second-order stationary, see \cite{Joh16}.

\begin{Assumption}
\label{ass:secstat}
There exist functions $c_X,c_W,c_{WX}:\mathbb [-1,1]\to\mathbb R$, such that
\begin{align*}
Cov(X(s),X(t)) &= c_{X}(t-s),	\\	
Cov(W(s),W(t)) &= c_{W}(t-s),	\\	
Cov(W(s),X(t)) &= c_{WX}(t-s),	
\end{align*}
for all $s,t \in [0,1]$, respectively, where $c_X$ is assumed to be continuous.
\end{Assumption}

By imposing continuity of $c_X$, we have that whenever \eqref{eq:endo} holds for one $t\in[0,1]$, this immediately implies $E[X(t)U]\neq0$ on some set with positive Lebesgue measure. This condition ensures, that the test statistic proposed in the following can be used to consistently test for the null hypothesis \eqref{eq:exo} against alternatives \eqref{eq:endo}.

Note, that $c_X$ and $c_W$ define the kernels of the covariance operators $\Gamma_X$ of $X$ and $\Gamma_W$ of $W$, respectively, and $c_{WX}$ is the kernel of the cross covariance operator $\Gamma_{WX}$ of $X$ and $W$. The (joint) weak stationarity of $(X,W)$ ensures, that both covariance operators as well as the cross covariance operator have the same exponential system of eigenfunctions, which we denote by $(\phi_k)_{k\in\mathbb N}$. Hence, let $(x_k,\phi_k)_{k\in\mathbb N}$ be the eigensystem of $\Gamma_X$, $(w_k,\phi_k)_{k\in\mathbb N}$ the eigensystem of $\Gamma_{W}$ and $(c_k,\phi_k)_{k\in\mathbb N}$ the eigensystem of $\Gamma_{WX}$. Furthermore, denote $\lambda_k=\frac{|c_k|^2}{w_k}$.

\begin{Assumption}
\label{ass:eigenvalues}
Throughout the article, we assume that all eigenvalues are strictly positive and that
\[\sum_{k\in\mathbb Z}\frac{|E[Y\langle X,\phi_k\rangle]|^2}{x_k^2}<\infty.\]
Furthermore, we denote by $\mu_X=\sum_{k\in\mathbb Z}\langle\mu_X,\phi_k\rangle\phi_k$ and $\mu_W=\sum_{k\in\mathbb Z}\langle\mu_W,\phi_k\rangle\phi_k$ the expectations of $X$ and $W$, respectively. Additionally, we assume that there exists some $0<\tau<\infty$ such that
\begin{align}
& \sup_{k \in \mathbb Z} \left|
\frac{\lambda_k}{w_k} \right| \leq \tau. \label{Progwohldef}
\end{align}
\end{Assumption}

The last assumptions ensures, that the linear prediction of $X$ with respect to $W$ is well defined.

In principle, if they were available, IV estimation would be based on the optimal instrument $\tilde W$ defined by
\begin{align*}
\tilde W=\Gamma_{WX}\Gamma_{W}^{-1} W  
= \sum_{k \in \mathbb Z} \frac{\overline{c_k}}{w_k} \langle W,\phi_k
\rangle \phi_k
\end{align*}
and the eigenvalues $(\lambda_k)_{k\in\mathbb N}$ of the corresponding cross covariance operator $\Gamma_{\tilde W X}$. However, this is usuall not the case and the optimal instrument respectively the corresponding eigenvalues of the cross covariance operator have to be estimated. Note, that $\tilde W$ could be exactly computed from $X$ and $W$, if the (cross) covariance operators were known and remark that $\lambda_k=\frac{|c_k|^2}{w_k}\leq x_k$ for all $k\in\mathbb Z$.

In the following, let $\{(X_i,W_i,Y_i)\}_{i=1,\ldots,n}$ be independent and identically distributed (i.i.d.)
copies of $(X,W,Y)$ and suppose \eqref{eq:exo} is valid. Then, we can consistently estimate the unknown slope function $\beta$ due to \cite{Joh13} and \cite{Joh16} in two different ways. For this purpose, let $(\alpha_n)_{n\in\mathbb N}$ be a sequence of regularization parameters such that $\alpha_n>0$ for all $n\in\mathbb N$ and $\lim_{n\to\infty}\alpha_n=0$. To simplify notation, we will write $\alpha$ for the regularization keeping in mind that it still depends on $n$. Since the covariance operators and therefore the corresponding eigenvalues are unknown, they have to be estimated in a first step.  Further, let $\Gamma_{WX,n},\Gamma_{X,n},\Gamma_{W,n}:\, L_2([0,1]) \to L_2([0,1])$ denote the empirical versions of $\Gamma_{WX},\Gamma_X$ and $\Gamma_W$, respectively, defined by
\begin{align*}
\Gamma_{WX,n} f := \frac1n \sum_{i=1}^n  \langle W_i ,f\rangle X_i,	\quad	\Gamma_{X,n }f &:= \frac1n \sum_{i=1}^n  \langle X_i,f\rangle X_i,	\quad	\text{and}	\quad	\Gamma_{W,n }f &:= \frac1n \sum_{i=1}^n  \langle W_i,f\rangle W_i
\end{align*}
for $f\in L_2([0,1])$. These estimators as well as the deduced estimators
\begin{align*}
&\hat w_k := \frac1n \sum_{i=1}^n |\langle W_i, \phi_k \rangle|^2, \,
&\hat x_k := \frac1n \sum_{i=1}^n |\langle X_i, \phi_k \rangle|^2,\\
&\hat c_k := \frac1n \sum_{i=1}^n \langle \phi_k, X_i
\rangle \langle W_i,\phi_k \rangle,\,
&\hat \lambda_k := \frac{|\hat c_k|^2}{\hat w_k}I\{\hat w_k \geq
\alpha\}
\end{align*}
for the eigenvalues $w_k$, $x_k$, $c_k$ and $\lambda_k$, respectively, are consistent for all $k\in\mathbb Z$. Hence, observations of the optimal linear instrument $\widetilde W$ can be estimated by
\begin{align*}
\widetilde W_{n,i} :=  \sum_{k \in \mathbb Z} \frac{\overline{\hat
c_k}}{\hat w_k}I\{ \hat w_k \geq \alpha \} \langle W_i,\phi_k \rangle
\phi_k, \quad i=1,\ldots,n,
\end{align*}
and the corresponding cross covariance operator by 
\begin{align}
\widetilde \Gamma_n := \frac1n \sum_{i=1}^n \langle\widetilde W_{n,i},\cdot\rangle X_i
    = \frac1n\sum_{k \in \mathbb Z} \frac{\overline{\hat c_k}}{\hat w_k} I\{
      \hat w_k \geq \alpha \}\sum_{i=1}^n \langle \cdot, X_i \rangle
      \langle W_i, \phi_k \rangle \phi_k. \label{tildegamman}
\end{align}
This allows to construct the IV-based estimator $\hat \beta_{IV} $ of the slope function $\beta$ defined by
\begin{equation}
\hat \beta_{IV} := \sum_{k \in \mathbb Z} \frac{\hat
g_k}{\hat \lambda_k} I \{\hat \lambda_k \geq \gamma_k^\nu \alpha
\}\phi_k=\sum_{k \in \mathbb Z} 
\frac{\frac1n \sum_{i=1}^n \langle W_i, \phi_k \rangle Y_i}{\hat c_k}
I\{\hat \lambda_k \geq \gamma_k^\nu \alpha \} I\{ \hat w_k
\geq \alpha \} \phi_k,	\label{eq:betaivdach}
\end{equation}
where
\begin{align*}
\hat g_k &= \frac{1}{n} \sum_{i=1}^n Y_i \langle \tilde
W_{n,i},\phi_k \rangle \nn .
\end{align*}
As shown in \cite{Joh16}, under Assumptions \ref{ass:secstat} and \ref{ass:eigenvalues} the estimator $\hat \beta_{IV}$ is consistent under the exogeneity assumption \eqref{eq:exo} as well as under endogeneity of \eqref{eq:endo}. 
In contrast, again under Assumptions \ref{ass:secstat} and \ref{ass:eigenvalues}, the estimator
\begin{equation}
\hat \beta = 
\sum_{k \in \mathbb Z} 
\frac{\frac1n \sum_{i=1}^n \langle X_i, \phi_k \rangle Y_i}{\hat x_k} I \{ \hat\lambda_k \geq \alpha \gamma_k^\nu \} 
\phi_k  \label{eq:betadach}
\end{equation}  
is only consistent under the exogeneity assumption (\ref{eq:exo}) (see \cite{Joh13}) and inconsistent under endogeneity of (\ref{eq:endo}). Note, that in comparison to the original definition in \cite{Joh13}, for $\hat \beta$, we
use the same indicator function $I \{ \hat\lambda_k \geq \alpha \gamma_k^\nu \}$ as in $\hat\beta_{IV}$. It turned out, that the tests perform better if the same regularization is used in both estimators although it might not be the best choice for estimating $\beta$ by $\hat\beta$ under assumption \eqref{eq:exo}.

Based on the two estimators (\ref{eq:betaivdach}) and (\ref{eq:betadach}), we construct the test statistic as
\begin{align}
T_n &= \frac1n \sum_{i=1}^n 
\left| \left\langle \hat\beta_{IV}-\hat \beta , X_i \right\rangle \right|^2=\left\langle  
\hat\beta_{IV}-\hat \beta,
\Gamma_{X,n}\left( \hat\beta_{IV}-\hat \beta \right)
\right\rangle.
\end{align}
The last  {representation above} corresponds to the idea used in \cite{MSM05} to construct
a goodness-of-fit test. The equivalence of both approaches can be seen by using the singular value decomposition for the estimators and for the covariance operator.

\begin{Assumption}
\label{ass:regulari}
For the sequence of regularization parameters, we assume
\begin{align*}
& \alpha_n=\alpha >0 \; \forall\, n \in \mathbb N,
\  \alpha = o(1) \text{ and }\  \frac{1}{n \alpha^2} = o(1).
\end{align*}
\end{Assumption}

For the next results, different moment conditions for $X$, $W$ and $U$ are required. To simplify the notation, we introduce the following sets. In doing so, we assume, that all conditions on $X$ and $W$ mentioned above are fulfilled and define
\begin{align}
\mathcal F_{\eta}^m 
&:= \Big\{ (X,W)\Big|\sup_{k \in \mathbb Z} \e \left| \frac{\langle
X, \phi_k \rangle}{\sqrt{x_k}} \right|^m \leq \eta\text{ and }
\sup_{k \in \mathbb Z}\e \left| \frac{\langle W, \phi_k \rangle}{\sqrt{w_k}} \right|^m
\leq \eta\Big\},   \label{MengeF} \\
\mathcal G_{\eta}^m 
&:= 
\Big\{ X\Big|\Gamma_X>0
\text{ and } \sup_{k \in
\mathbb Z} \e \left|
\frac{\langle X, \phi_k \rangle}{\sqrt{x_k}} \right|^m \leq \eta \Big\}.   \label{MengeG}
\end{align}
In the following, for an operator $\Delta$, we denote by $\Delta^\dag$ the regularized inverse of the operator, that is
\[\Delta^\dag=\sum_{k\in\mathbb Z}\frac1{\delta_k}I\{|\delta_k|>\alpha\gamma_k^\nu\langle \cdot,\phi_k\rangle\phi_k\}\] 
and we define
\begin{equation}
t_{n}^2 
:= \| (\tilde\Gamma_{X,n}^\dag - \Gamma_X^\dag) \Gamma_X \|_{HS}^2
= \sum_{k \in \mathcal K_n} \left( \frac{x_k w_k}{|c_k|^2} -1  \right)^2,
\label{tkn}
\end{equation}
where $\|\cdot\|_{HS}$ denotes the Hilbert-Schmidt norm and we set
\begin{equation*}
\mathcal K_n := \{ k \in \mathbb Z \mid \lambda_k\geq \alpha \gamma_k^\nu \}.
\end{equation*}

Now, we are in a position to state an asymptotic result for the test statistic.
\begin{theo}\label{CLTteststatistik}
In model (\ref{eq:model}), under Assumptions \ref{ass:secstat}-\ref{ass:regulari}, let $\{(X_i,W_i,Y_i)\}_{i=1,\ldots,n}$ be i.i.d.~copies of $(X,W,Y)$ with $\left(X,W\right)\in \mathcal F_{\eta}^{128}$ and $\e|U|^{128} \leq\eta<\infty$. Furthermore, let $t_n\to\infty$ as $n\to\infty$, and
\begin{align*}
\frac1{t_n^4}\sum_{k\in\mathcal K_n}\left(\frac{x_kw_k}{|c_k|^2}-1\right)^4 = o(1),	\quad 
\sum_{k \in \mathbb Z} | \langle \beta, \phi_k \rangle |
\frac{x_k^{3/2} w_k}{|c_k|^2} < \infty,	\quad \sum_{k \in \mathbb Z} \frac{x_k^2w_k}{|c_k|^2} < \infty.
\end{align*}
Then, under $H_0$, we have
\[ \frac{n}{t_{n}}\left( T_n - \mathfrak B_n -\mathfrak R_n \right)
\stackrel{\mathcal D}{\to} \mathcal N(0, \mathfrak V), \]
where
\begin{align*}
\mathfrak B_n&=\frac {n}{2t_n}\langle\beta,\mu_X\rangle^2\sum_{k\in\mathbb Z}\left(\frac{\langle\mu_W,\phi_k\rangle}{c_k}-\frac{\langle\mu_X,\phi_k\rangle}{x_k}\right)^2x_kI\{\lambda_k\geq \alpha\gamma_k^\nu\},\\
\mathfrak R_n&=\frac{1}{n} 
\left( \sigma^2 + \sum_{m \in \mathbb Z} |\langle \beta, \phi_m \rangle |^2 x_m \right)
\sum_{k \in \mathbb Z} 
\left(\frac{x_k w_k}{|c_k|^2} -1 \right) I\{ \lambda_k \geq \alpha \gamma_k^\nu \},\\
\mathfrak V&=\left(\sigma^2 + \sum_{m \in \mathbb Z} | \langle \beta, \phi_m \rangle|^2 x_m \right)^2.
\end{align*}
Additionally, if $X$ is centered, that is, $E[X(t)]=0$ for all $t\in[0,1]$, we have $\mu_X=0$ leading to $\mathfrak B_n=0$.
\end{theo}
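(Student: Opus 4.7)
The plan is to substitute $Y_i = \langle\beta, X_i\rangle + U_i$ (valid under $H_0$) into both estimators and expand coefficient-wise in the Fourier basis. Using the second representation of $\hat\beta_{IV}$ in (\ref{eq:betaivdach}), and noting that both estimators share the common indicator $\hat\pi_k := I\{\hat\lambda_k \geq \alpha\gamma_k^\nu\}$, I would write
\begin{equation*}
\langle \hat\beta_{IV} - \hat\beta, \phi_k\rangle
= \hat\pi_k\!\left(\frac{\frac{1}{n}\sum_i \langle W_i,\phi_k\rangle U_i}{\hat c_k} - \frac{\frac{1}{n}\sum_i \langle X_i,\phi_k\rangle U_i}{\hat x_k}\right) + \hat\pi_k\, B_{k,n},
\end{equation*}
where $B_{k,n}$ collects the terms coming from the signal part $\langle\beta, X_i\rangle$. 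A Taylor expansion of $1/\hat c_k$ and $1/\hat x_k$ around $1/c_k$ and $1/x_k$ shows that the accumulated contribution of $\sum_k x_k|B_{k,n}|^2$ reduces, at leading order, to the bias expression $\mathfrak B_n$ driven by the mismatch between $\mu_X/x_k$ and $\mu_W/c_k$; this vanishes in the centered case $\mu_X = 0$.

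Next, since $(\phi_k)$ is, up to $O_P(n^{-1/2})$ perturbations, the approximate eigenbasis of $\Gamma_{X,n}$ with approximate eigenvalues $\hat x_k$, the off-diagonal cross terms in $\langle\hat\beta_{IV}-\hat\beta, \Gamma_{X,n}(\hat\beta_{IV}-\hat\beta)\rangle$ are of lower order. After replacing $\hat\pi_k, \hat c_k, \hat w_k, \hat x_k$ with the population counterparts $\pi_k := I\{\lambda_k \geq \alpha\gamma_k^\nu\}, c_k, w_k, x_k$, I would reduce the test statistic to
\begin{equation*}
T_n = \mathfrak B_n + \sum_{k \in \mathcal K_n} \frac{1}{x_k}\left|\frac{\frac{1}{n}\sum_i \langle W_i,\phi_k\rangle U_i}{c_k/x_k} - \frac{1}{n}\sum_i \langle X_i,\phi_k\rangle U_i\right|^2 + o_P(t_n/n).
\end{equation*}
Expanding the square produces the double-sum
\begin{equation*}
\frac{1}{n^2}\sum_{i,j=1}^n U_iU_j\, K_n(X_i, W_i; X_j, W_j), \qquad K_n = \sum_{k \in \mathcal K_n}\left(\frac{x_k w_k}{|c_k|^2} - 1\right) \frac{\langle \cdot,\phi_k\rangle\overline{\langle \cdot,\phi_k\rangle}}{x_k} + \cdots,
\end{equation*}
where the omitted pieces are structurally analogous. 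The diagonal $i=j$ contribution, after conditional expectation using $\e[U_i^2] = \sigma^2$ and $\e|\langle X_i,\phi_k\rangle|^2 = x_k$, produces exactly $\mathfrak R_n$; the off-diagonal $i\neq j$ part is a degenerate second-order $U$-statistic whose variance, when scaled by $(n/t_n)^2$, converges to $\mathfrak V = (\sigma^2 + \sum_m|\langle\beta,\phi_m\rangle|^2 x_m)^2$.

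A martingale CLT applied to the Hoeffding projection of this $U$-statistic then yields the Gaussian limit, with the condition $t_n^{-4}\sum_k(x_kw_k/|c_k|^2-1)^4 = o(1)$ serving as the Lindeberg/negligibility hypothesis on the kernel weights. The main obstacle is the uniform replacement of the empirical spectral quantities $\hat c_k, \hat w_k, \hat x_k, \hat\pi_k$ by their population counterparts, because these objects appear in denominators and the set $\mathcal K_n$ is defined through the boundary $\lambda_k \approx \alpha\gamma_k^\nu$. The assumptions $\e|U|^{128} \leq \eta$ and $(X,W) \in \mathcal F_\eta^{128}$, combined with Rosenthal-type inequalities applied uniformly over $k \in \mathcal K_n$, are needed to push the pointwise approximation errors below $o_P(t_n/n)$, and the summability hypotheses $\sum_k |\langle\beta,\phi_k\rangle| x_k^{3/2}w_k/|c_k|^2 < \infty$ and $\sum_k x_k^2 w_k/|c_k|^2 < \infty$ ensure that these errors aggregate in a summable fashion across the index set.
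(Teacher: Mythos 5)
Your overall architecture---substitute $Y_i=\langle\beta,X_i\rangle+U_i$ under $H_0$, replace the empirical spectral quantities $\hat c_k,\hat w_k,\hat x_k,\hat\pi_k$ by their population counterparts at cost $o_P(t_n/n)$, split the resulting quadratic form into a diagonal part giving $\mathfrak R_n$ and a degenerate off-diagonal part handled by a martingale CLT---is the same route the paper takes. However, there is a genuine gap in how you treat the signal component, and it is not a technicality: as written, your decomposition cannot produce the stated $\mathfrak R_n$ and $\mathfrak V$.

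You assign the entire contribution of the signal part $\langle\beta,X_i\rangle$ to a deterministic bias $B_{k,n}$ whose accumulated effect is $\mathfrak B_n$. That accounts only for the mean-driven piece (which indeed vanishes when $\mu_X=0$). But expanding $\langle\beta,X_i\rangle=\sum_m\langle\beta,\phi_m\rangle\langle\phi_m,X_i\rangle$ inside both estimators, the cross-frequency terms with $|m|\neq|k|$ do not cancel and are not deterministic: they produce, per coordinate $k$, the mean-zero random quantity $\frac1n\sum_{i}D_{i,k}\sum_{|m|\neq|k|}S_{i,m}$ with $D_{i,k}=\langle W_i,\phi_k\rangle/c_k-\langle X_i,\phi_k\rangle/x_k$ and $S_{i,m}=\langle\beta,\phi_m\rangle\langle\phi_m,X_i\rangle$ (the term the paper isolates as $A_n$). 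This is of exactly the same stochastic order as your noise term $\frac1n\sum_i D_{i,k}U_i$, so the effective per-observation innovation is $\sigma U_i+\sum_{|m|\neq|k|}S_{i,m}$, not $U_i$ alone. With your kernel $\frac{1}{n^2}\sum_{i,j}U_iU_jK_n(\cdot)$ the diagonal would yield $\sigma^2\sum_k(x_kw_k/|c_k|^2-1)\pi_k/n$ and the off-diagonal variance would be $\sigma^4$, whereas the theorem requires the factor $\sigma^2+\sum_m|\langle\beta,\phi_m\rangle|^2x_m$ in $\mathfrak R_n$ and its square in $\mathfrak V$; the missing summand is precisely $\mathrm{E}\,|\sum_{|m|\neq|k|}S_{1,m}|^2=\sum_{|m|\neq|k|}|\langle\beta,\phi_m\rangle|^2x_m$, which enters because $D_{1,k}$ is uncorrelated with $S_{1,k}$ but not independent of the off-frequency coefficients. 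You state the correct limit, but your derivation as described does not deliver it; you need to carry the $A_n$-type term through the martingale CLT alongside the $U$-noise (the rest of your argument---negligibility of off-diagonal $\Gamma_{X,n}$ cross terms, Rosenthal-type uniform control of the empirical denominators, the role of the fourth-moment condition on the weights as a Lindeberg condition---matches the paper's remainder analysis).
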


\bigskip

\begin{proof}
For the sake of simplicity, we assume, that $X$ is centered. If not, the additional bias term has to be taken into account as well as stated in the assertion of the theorem. We give a short overview of the proof. The used propositions and lemmas are stated and proven in the appendix. For the employed decomposition of the test statistic, we need several (modified) correlation operators of the instruments and $X$. We define $\mathcal U_n$, $\Delta_{W,n}:L_2([0,1]) \to \mathbb R$ by
\begin{align*}
\mathcal U_n f=\frac{1}{n} \sum_{i=1}^n (W_i \otimes U_i)f	\quad	\text{and}	\quad	\Delta_{W,n}f =\frac{1}{n} \sum_{i=1}^n (W_i \otimes Y_i)f,
\end{align*}
and set
\begin{align*}
\widetilde{\mathcal U}_n 
&:=  
\frac{1}{n} \sum_{i=1}^n \langle \cdot, \tilde W_i \rangle U_i
= 
\frac{1}{n} \sum_{k \in \mathbb Z} \frac{\overline{c_k}}{w_k}\sum_{i=1}^n \langle \phi_k, W_i \rangle \langle \cdot, \phi_k \rangle U_i, \\
\widehat{\widetilde{\mathcal U}}_n 
&:=  
\frac{1}{n} \sum_{i=1}^n \langle \cdot, \tilde W_{n,i} \rangle U_i
= 
\frac{1}{n} \sum_{k \in \mathbb Z} \frac{\overline{\hat c_k}}{\hat w_k}
\Iwkd \sum_{i=1}^n \langle \phi_k, W_i \rangle \langle \cdot, \phi_k
\rangle U_i.      
\end{align*} 
For the test statistic, the following decomposition holds
\begin{align}
\frac{n}{t_n} T_n 
&=
\frac{1}{t_n} \sum_{j=1}^n 
\left| \left\langle 
T_{n,1} + T_{n,2} + T_{n,3}, X_j
\right\rangle \right|^2
+
\frac{1}{t_n} \sum_{j=1}^n 
\langle 
T_{n,1} + T_{n,2}+ T_{n,3}, X_j \rangle
\langle X_j, R_n\rangle \nonumber\\
&\phantom{=}
+
\frac{1}{t_n} \sum_{j=1}^n 
\langle 
X_j, T_{n,1} + T_{n,2} + T_{n,3} \rangle
\langle R_n, X_j \rangle
+
\frac{1}{t_n} \sum_{j=1}^n 
\left| \left\langle 
R_n, X_j
\right\rangle \right|^2,\label{ZerlTeststat}
\end{align}
where
\begin{align*}
T_{n,1} 
&= 
\left(\tilde{\Gamma}_n^\dag \hat{\tilde{\mathcal U}}_n -
\Gamma_{X,n}^\dag\mathcal U_{X,n} \right)
-
\hat\Pi_{\mathcal K_n}\left(\tilde{\Gamma}^\dag \tilde{\mathcal U}_n - \Gamma_{X}^\dag
\mathcal U_{X,n}\right)\\
T_{n,2} 
&=
\left( \tilde{\Gamma}_n^\dag \tilde{\Gamma}_n - \Gamma_{X,n}^\dag \Gamma_{X,n}\right)
\beta  - \hat\Pi_{\mathcal K_n} A_n\\
T_{n,3} 
&=
\hat\Pi_{\mathcal K_n}
\left(\tilde{\Gamma}^\dag \tilde{\mathcal U}_n - \Gamma_{X}^\dag
\mathcal U_{X,n} + A_n\right) 
- 
\left(\tilde{\Gamma}^\dag \tilde{\mathcal U}_n - \Gamma_{X}^\dag
\mathcal U_{X,n} + A_n\right) \\
R_{n}
&=
\tilde{\Gamma}^\dag \tilde{\mathcal U}_n-\Gamma_{X}^\dag \mathcal U_{X,n} + A_n
\end{align*}
and
\begin{equation}
A_n = 
\frac{1}{n} \sum_{i=1}^n \sum_{k \in \mathbb Z} 
D_{i,k}
I\{ \lambda_k \geq \alpha \gamma_k^\nu \}
\sum_{\substack{m \in \mathbb Z, \\ |m| \neq |k|}} S_{i,m} \phi_k. \label{An}
\end{equation}
When subtracting $\mathfrak R_{n}$, the last term in \eqref{ZerlTeststat} can be further decomposed to get 
\begin{equation*}
\frac{1}{t_n} \sum_{j=1}^n 
\left| \left\langle 
R_n, X_j
\right\rangle \right|^2 -\frac{n}{t_n}\mathfrak R_{n}
=
\frac{n}{t_n} R_{n,3} + \frac{n}{t_n} \left( R_{n,2} - \mathfrak R_n \right)
+ \frac{n}{t_n} \left( R_{n,1} + R_{n,4} + R_{n,5} \right),
\end{equation*}
where $R_{n,i}$, $i=1,\ldots,5$ are defined in the appendix. There, we will also see that
\begin{align*}
\frac{n}{t_n} R_{n,3}&{=}
\frac{1}{nt_n} \sum_{k \in \mathbb Z} x_k I\{ \lambda_k \geq \alpha \gamma_k^\nu
\}
\sum_{\substack{i,j=1,\\ i \neq j}}^n 
D_{i,k} 
\Big( \sigma U_i + \sum_{\substack{ m \in \mathbb Z, \\ |m| \neq |k|}} S_{i,m}\Big)
\overline{D_{j,k}} 
\Big( \sigma U_j + \sum_{\substack{ m \in \mathbb Z, \\ |m| \neq |k|}} \overline{S_{j,m}}\Big)
 \nn 
\end{align*} 
converges weakly due to Theorem \ref{CLTRn3} to a normal distribution with mean 0 and variance $\mathfrak V$, while all remaining terms are discussed to be asymptotically negligible using Proposition
\ref{Rn1245}, \ref{Tn1}, \ref{Tn2} and \ref{Tn3} together with standard estimation techniques for the mixed terms. With the Lemma of Slutsky, the assertion follows.
\end{proof}

To apply the above result for testing,  {the bias and variance term have to be estimated}. To this end, note that $\sigma^2$ can be  {consistently} estimated by
\begin{equation}{\hat\sigma}_n^2=\frac{1}{n}\sum_{i=1}^n(Y_i-\langle\hat\beta_{IV},X_i\rangle)^2\label{eq:sigmahat}\end{equation}
 {due to} the law of large numbers and since $\frac1n\sum_{i=1}^n\langle\beta-\hat\beta_{IV},X_i\rangle^2=o_P(1)$ by similar calculations as in the derivation of the asymptotic distribution of $T_n$.

\begin{cor}
\label{CLTteststatistikemp}
Suppose all assumptions of Theorem \ref{CLTteststatistik} hold. Then, under $H_0$, we have
\[ \frac{n}{\hat t_{n}}\frac{T_n -\hat{\mathfrak B}_n-\hat{\mathfrak R}_n}{\sqrt{\hat{\mathfrak V}_n}}
\stackrel{\mathcal D}{\to} \mathcal N(0,1), \]
where ${\hat\sigma}_n^2$ is defined in \eqref{eq:sigmahat} and
\begin{align*}
\hat{t}_n^2 &= 
\sum_{k \in \mathbb Z}
\left( \frac{\hat x_k \hat w_k}{|\hat c_k|^2} - 1 \right)^2
I\{ \hat \lambda_k \geq \alpha \gamma_k^\nu \}, \\
\hat{\mathfrak B}_n&=\frac {n}{2\hat t_n}\langle\hat\beta_{IV},\hat\mu_X\rangle^2\sum_{k\in\mathbb Z}\left(\frac{\langle\hat\mu_W,\phi_k\rangle}{c_k}-\frac{\langle\hat\mu_X,\phi_k\rangle}{x_k}\right)^2\hat x_kI\{\hat\lambda_k\geq \alpha\gamma_k^\nu\},\\
\hat{\mathfrak R}_n&=\frac1n
\left( {\hat\sigma}_n^2 + \| \Gamma_{X,n}^{1/2} \hat\beta_{IV} \|^2 \right)
\sum_{k \in \mathbb Z}
\left( \frac{\hat x_k \hat w_k}{|\hat c_k|^2} - 1 \right)
I\{ \hat \lambda_k \geq \alpha \gamma_k^\nu \},\\
\hat{\mathfrak V}_n&=\left( {\hat\sigma}^2 + \| \Gamma_{X,n}^{1/2} \hat\beta_{IV} \|^2 \right)^2.	\\
\end{align*}
\end{cor}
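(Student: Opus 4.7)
The proof is essentially an application of Slutsky's lemma, translating the convergence of Theorem \ref{CLTteststatistik} into the form stated in the corollary. Writing the target quantity as
\[
\frac{n}{\hat t_n}\frac{T_n - \hat{\mathfrak B}_n - \hat{\mathfrak R}_n}{\sqrt{\hat{\mathfrak V}_n}} = \frac{t_n}{\hat t_n\sqrt{\hat{\mathfrak V}_n}}\left[\frac{n}{t_n}(T_n - \mathfrak B_n - \mathfrak R_n) - \frac{n}{t_n}(\hat{\mathfrak B}_n - \mathfrak B_n) - \frac{n}{t_n}(\hat{\mathfrak R}_n - \mathfrak R_n)\right],
\]
it suffices to establish (i) $\hat t_n / t_n \Pkonv 1$, (ii) $\hat{\mathfrak V}_n \Pkonv \mathfrak V$, and (iii) both $\frac{n}{t_n}(\hat{\mathfrak R}_n - \mathfrak R_n) = o_P(1)$ and $\frac{n}{t_n}(\hat{\mathfrak B}_n - \mathfrak B_n) = o_P(1)$. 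Combined with Theorem \ref{CLTteststatistik} and Slutsky's lemma, these imply the asserted $\mathcal N(0,1)$ limit.

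For step (ii), I would first verify $\hat\sigma_n^2 \Pkonv \sigma^2$: substituting $Y_i = \langle\beta,X_i\rangle + U_i$ in \eqref{eq:sigmahat} produces $\frac{1}{n}\sum_{i=1}^n U_i^2$, which converges to $\sigma^2$ by the law of large numbers, plus two cross terms that are controlled by the same $L^2$-consistency of $\hat\beta_{IV}$ used in the proof of Theorem \ref{CLTteststatistik}, exactly as noted after \eqref{eq:sigmahat}. Next, since $\Gamma_{X,n}$ converges to $\Gamma_X$ in Hilbert--Schmidt norm and $\hat\beta_{IV}$ is consistent for $\beta$ in the appropriate weighted norm, one obtains $\|\Gamma_{X,n}^{1/2}\hat\beta_{IV}\|^2 \Pkonv \sum_{m\in\mathbb Z}|\langle\beta,\phi_m\rangle|^2 x_m$, and hence $\hat{\mathfrak V}_n \Pkonv \mathfrak V$ by the continuous mapping theorem.

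The technical heart of the proof is step (i), and step (iii) for $\hat{\mathfrak R}_n$ shares the same structure. With $\hat{\mathcal K}_n = \{k : \hat\lambda_k \geq \alpha\gamma_k^\nu\}$, I split
\[
\hat t_n^2 - t_n^2 = \sum_{k \in \mathcal K_n \cap \hat{\mathcal K}_n}\left[\Big(\tfrac{\hat x_k\hat w_k}{|\hat c_k|^2}-1\Big)^2 - \Big(\tfrac{x_k w_k}{|c_k|^2}-1\Big)^2\right] + \sum_{k \in \hat{\mathcal K}_n \setminus \mathcal K_n}\Big(\tfrac{\hat x_k\hat w_k}{|\hat c_k|^2}-1\Big)^2 - \sum_{k \in \mathcal K_n \setminus \hat{\mathcal K}_n}\Big(\tfrac{x_k w_k}{|c_k|^2}-1\Big)^2.
\]
The first sum is controlled term-by-term through concentration inequalities for $\hat x_k, \hat w_k, \hat c_k$ (available under the moment condition $(X,W) \in \mathcal F_\eta^{128}$ and analogous to those underlying the results in \cite{Joh13,Joh16}), combined with the summability assumptions of Theorem \ref{CLTteststatistik}. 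The two boundary sums are treated by noting that on the event $\{k \in \mathcal K_n \triangle \hat{\mathcal K}_n\}$ one has $|\lambda_k - \alpha\gamma_k^\nu| \leq |\hat\lambda_k - \lambda_k|$, so a Markov-type bound together with Assumption \ref{ass:regulari} keeps the resulting contribution of order $o_P(t_n^2)$; dividing by $t_n^2 \to \infty$ then yields (i).

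The argument for $\hat{\mathfrak R}_n$ mirrors this splitting, applied to the first-power sum instead of the squared one, using the consistency of the scalar prefactor already established for $\hat{\mathfrak V}_n$. For $\hat{\mathfrak B}_n$ one additionally invokes $\hat\mu_X \Pkonv \mu_X$ and $\hat\mu_W \Pkonv \mu_W$ by the law of large numbers, together with the consistency of $\hat\beta_{IV}$ and the same sum-splitting. The main obstacle I expect lies precisely in the two boundary sums above: because $\alpha\gamma_k^\nu \to 0$ and $|\hat c_k|^2$ can be very small near the regularization threshold, indices in $\mathcal K_n \triangle \hat{\mathcal K}_n$ must be shown to be negligible with high probability, and this is where the joint force of Assumption \ref{ass:regulari} ($1/(n\alpha^2) = o(1)$), the high-moment bounds $(X,W) \in \mathcal F_\eta^{128}$, $\e|U|^{128} \leq \eta$, and the summability hypotheses of Theorem \ref{CLTteststatistik} must be brought to bear simultaneously.
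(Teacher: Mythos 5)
Your proposal follows essentially the same route the paper takes: Corollary \ref{CLTteststatistikemp} is stated there without an explicit proof beyond the remark that $\hat\sigma_n^2$ from \eqref{eq:sigmahat} is consistent via the law of large numbers and $\frac1n\sum_{i=1}^n\langle\beta-\hat\beta_{IV},X_i\rangle^2=o_P(1)$, the intended argument being exactly Slutsky's lemma combined with consistency of the plug-in quantities, which is what your steps (i)--(iii) spell out. Your identification of the symmetric difference $\mathcal K_n\triangle\hat{\mathcal K}_n$ and of the ratio $\hat t_n/t_n$ as the technical crux is correct and in fact more explicit than anything the paper provides; just note that your treatment of those boundary sums (and of the divergent bias comparison $\frac{n}{t_n}(\hat{\mathfrak B}_n-\mathfrak B_n)$) remains a sketch rather than a completed bound, which matches the level of detail the paper itself offers.
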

Using Corollary \ref{CLTteststatistikemp}, it is possible to construct a test for the null hypothesis
\begin{equation}
H_0:\ \E[X(t)U]=0\mbox{ for all }t\in[0,1]
\end{equation}
against
\begin{equation}
H_1:\ \E[X(t)U]\neq0\mbox{ for at least one }t\in[0,1].\label{nullhyp}
\end{equation}
For given size $\gamma\in(0,1)$, we can reject $H_0$ if
\begin{equation}
\frac{n}{\hat t_{n}}\frac{T_n -\hat{\mathfrak B}_n-\hat{\mathfrak R}_n}{\sqrt{\hat{\mathfrak V}_n}}>u_{1-\gamma}\label{eq:stat_center}
\end{equation}
where $u_{1-\gamma}$ denotes the $(1-\gamma)$-quantile of the standard normal distribution. That is, we get a one-sided test for $H_0$ against $H_1$. In the special case $\mu_X=0$ we can neglect the additional bias term which avoids the use of its plug-in estimator such that the test has the simpler structure $I\{n(T_n -\hat{\mathfrak R}_n)/{{\hat t_{n}}\sqrt{\hat{\mathfrak V}_n}}>u_{1-\gamma}\}$.

\begin{cor}
Suppose all assumptions of Corollary \ref{CLTteststatistikemp} hold. Then, under the alternative $H_1$, the test constructed in \eqref{eq:stat_center} is consistent.
\end{cor}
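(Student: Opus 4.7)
The plan is to show that under $H_1$ the left-hand side of \eqref{eq:stat_center} diverges to $+\infty$ in probability, so that $P(\text{reject }H_0\mid H_1)\to 1$ for every fixed level $\gamma\in(0,1)$. The argument rests on three ingredients: a positive lower bound on $T_n$, negligibility of the centering $\hat{\mathfrak B}_n+\hat{\mathfrak R}_n$, and divergence of the scaling factor $n/\hat t_n$.

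For the lower bound on $T_n$, I would use that under $H_1$ the IV-estimator $\hat\beta_{IV}$ remains consistent for $\beta$ in $L^2$ by \cite{Joh16}, while $\hat\beta$ is inconsistent: its regularized population analog solves the misspecified normal equation $\Gamma_X\beta^{\ast}=E[XY]=\Gamma_X\beta+E[XU]$, so $\hat\beta$ approaches a limit $\beta^{\ast}$ differing from $\beta$ by $\Gamma_X^\dag E[XU]$ on the truncation set dictated by the thresholding. The continuity of $c_X$ in Assumption~\ref{ass:secstat} implies, as already observed after that assumption, that under $H_1$ the function $t\mapsto E[X(t)U]$ is nonzero on a set of positive Lebesgue measure, so $E[XU]\neq 0$ in $L^2([0,1])$. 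Since all eigenvalues of $\Gamma_X$ are strictly positive by Assumption~\ref{ass:eigenvalues}, the difference $\beta-\beta^{\ast}$ is nonzero, and combining the operator-norm convergence $\Gamma_{X,n}\to\Gamma_X$ with the two estimator limits yields
\[
T_n=\langle\hat\beta_{IV}-\hat\beta,\,\Gamma_{X,n}(\hat\beta_{IV}-\hat\beta)\rangle\;\Pkonv\;\langle\beta-\beta^{\ast},\,\Gamma_X(\beta-\beta^{\ast})\rangle\;=:\;c_\star\;>\;0.
\]

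For the other two ingredients, $\hat{\mathfrak V}_n\Pkonv(\sigma^2+\langle\beta,\Gamma_X\beta\rangle)^2\in(0,\infty)$ follows from consistency of $\hat\sigma_n^2$ and $\hat\beta_{IV}$; the term $\hat{\mathfrak R}_n$ is $O_P(\hat t_n/n)$ by its explicit form combined with the assumption $\sum_k x_k^2w_k/|c_k|^2<\infty$ and a Cauchy--Schwarz bound against the definition of $\hat t_n$; and $\hat{\mathfrak B}_n$ vanishes when $\mu_X=0$ and is $o_P(1)$ otherwise by the analogous estimate. The hypotheses of Theorem~\ref{CLTteststatistik}, inherited via Corollary~\ref{CLTteststatistikemp}, give $\hat t_n=O_P(t_n)$ with $t_n\to\infty$ and $t_n=o(n)$, hence $\hat t_n/n\to 0$ in probability. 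On events of probability tending to one we therefore have $|\hat{\mathfrak B}_n|+|\hat{\mathfrak R}_n|\leq c_\star/2$ and $\hat{\mathfrak V}_n\leq 2(\sigma^2+\langle\beta,\Gamma_X\beta\rangle)^2$, so that
\[
\frac{n}{\hat t_n}\cdot\frac{T_n-\hat{\mathfrak B}_n-\hat{\mathfrak R}_n}{\sqrt{\hat{\mathfrak V}_n}}\;\geq\;\frac{n}{\hat t_n}\cdot\frac{c_\star/2}{\sqrt{2}\,(\sigma^2+\langle\beta,\Gamma_X\beta\rangle)}\;\nto\;+\infty
\]
in probability, exceeding $u_{1-\gamma}$ eventually and proving consistency.

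The main obstacle is the rigorous identification of the probability limit of $\hat\beta$ under $H_1$, because the data-driven threshold $I\{\hat\lambda_k\geq\alpha\gamma_k^\nu\}$ interacts with the eigenstructure of $\Gamma_X$ nontrivially, and the candidate limit $\Gamma_X^\dag E[XU]$ need not lie in $L^2$ without further smoothness of $E[XU]$. A cleaner route avoiding an explicit description of $\beta^{\ast}$ is to exploit the identity $T_n=\|\Gamma_{X,n}^{1/2}(\hat\beta_{IV}-\hat\beta)\|^2\geq \|\Gamma_{X,n}(\hat\beta_{IV}-\hat\beta)\|^2/\|\Gamma_{X,n}\|_{\text{op}}$ and to establish the limit $\Gamma_{X,n}(\hat\beta_{IV}-\hat\beta)\Pkonv -E[XU]\neq 0$ directly from the series representations of the two estimators together with $n^{-1}\sum_{i=1}^n X_iY_i\Pkonv E[XY]$ and $\Gamma_{X,n}\hat\beta_{IV}\Pkonv \Gamma_X\beta$. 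This yields the same lower bound $T_n\geq \|E[XU]\|^2/(2\|\Gamma_X\|_{\text{op}})$ on events of probability tending to one, while sidestepping any discussion of the limiting object $\beta^{\ast}$ itself.
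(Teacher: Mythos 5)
Your proof is correct and rests on the same mechanism as the paper's: under $H_1$ the IV estimator stays consistent while $\hat\beta$ acquires a non-vanishing bias, so $T_n$ is bounded away from zero in probability and the divergent factor $n/\hat t_n$ forces rejection. The paper packages this a little differently --- it writes $b$ for the ($n$-dependent) asymptotic bias of $\hat\beta$, expands $T_n$ around $\hat\beta-b$, reuses the $H_0$-analysis for the recentred quadratic form, and notes that the cross term $O_P(\sqrt{t_n/n})$ and the term $\frac1n\sum_i|\langle b,X_i\rangle|^2=O_P(1)$ diverge after multiplication by $n/t_n$. Your version is marginally more economical in that no CLT is needed under the alternative (a positive lower bound on $T_n$ together with $\hat{\mathfrak R}_n=o_P(1)$ and $n/\hat t_n\to\infty$ suffices), and your second route via $\Gamma_{X,n}(\hat\beta_{IV}-\hat\beta)\Pkonv -E[XU]$ neatly sidesteps identifying $\plim\hat\beta$. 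Three small remarks. (i) The integrability concern you raise about $\Gamma_X^\dag E[XU]$ is already resolved by Assumption \ref{ass:eigenvalues}: since $E[Y\langle X,\phi_k\rangle]=x_k\langle\beta,\phi_k\rangle+E[U\langle X,\phi_k\rangle]$ and $\beta\in L^2$, the summability condition there forces $\sum_k|E[U\langle X,\phi_k\rangle]|^2/x_k^2<\infty$, so the limiting bias is an honest $L^2$ element --- this is exactly the $b$ appearing in the paper's proof. (ii) Your Cauchy--Schwarz bound $\hat{\mathfrak R}_n=O_P(\hat t_n/n)$ does not come out as stated (Cauchy--Schwarz against $\hat t_n$ leaves an extra factor of the square root of the number of active indices); the bound actually used in the paper is $\sum_{k\in\mathcal K_n}(x_kw_k/|c_k|^2-1)\le\alpha^{-1}\sum_kx_k$ together with $1/(\sqrt n\,\alpha)=o(1)$, giving $\mathfrak R_n=o(n^{-1/2})$ --- but since only $\hat{\mathfrak R}_n=o_P(1)$ enters your lower bound, nothing breaks. (iii) Like the paper, you only genuinely handle $\mu_X=0$; the assertion that $\hat{\mathfrak B}_n=o_P(1)$ when $\mu_X\neq0$ would need a separate argument because of the $n/(2\hat t_n)$ prefactor in its definition.
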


\begin{proof}
We only consider the special case $\mu_X=0$ here. The general case can be proven by similar arguments. Under $H_1$, $\hat\beta$ is not consistently estimating $\beta$ such that it converges in probability to $\beta+b$ for some $b\in L^2[0,1]$ with
\begin{equation*}
b=\sigma\sum_{k\in\mathbb Z}\frac{E[U_1\langle X_1,\phi_k\rangle]}{x_k}I\{\lambda_k\geq \alpha\gamma_k^\nu\}\phi_k(t),
\end{equation*}
which is in general not equal to $0\in L^2[0,1]$ under endogeneity (by the continuity imposed in Assumption \ref{ass:secstat}). Hence, we have
\begin{align*}
T_n&=\frac1n\sum_{i=1}^n|\langle\hat\beta_{IV}-(\hat\beta-b),X_i\rangle|^2-\frac2n\sum_{i=1}^n\langle\hat\beta_{IV}-(\hat\beta-b),X_i\rangle\langle b,X_i\rangle+\frac1n\sum_{i=1}^n|\langle b,X_i\rangle|^2\\
&=\frac1n\sum_{i=1}^n|\langle\hat\beta_{IV}-(\hat\beta-b),X_i\rangle|^2-O_p\left(\sqrt{\frac {t_n}{n}}\right)+O_p\left(1\right).
\end{align*}
The standardized version of the first part converges in distribution to a standard normal distribution by similar arguments as in Theorem \ref{CLTteststatistik} and Corollary \ref{CLTteststatistikemp} while the sum of the remainder terms multiplied with $\frac n{t_n}$ goes to infinity for $n\to\infty$. Consequently, we have
\begin{align*}
P\left(\frac{n}{\hat t_{n}}\frac{T_n -\hat{\mathfrak R}_n}{\sqrt{\hat{\mathfrak V}_n}}>u_{1-\gamma}\right)\to1
\end{align*}
for $n\to\infty$.
\end{proof}

{In practice, we do not know, if $\mu_X=0$ such that a naive application of the asymptotic test without estimating ${\mathfrak B}_n$ could result in wrong decisions. In addition, asymptotic tests based on plug-in methods as above usually exhibit a smaller power compared to other methods. This is due to the additional estimation step. The bootstrap version of the test discussed in the next section is expected to have better finite sample behavior, since it is not {required} to estimate the unknown bias and variance. This has additionally the effect, that we need not distinguish between the cases $\mu_X=0$ and $\mu_X\neq 0$ which is a clear advantage of the bootstrap test.}

\section{Bootstrap Consistency}

\label{se:boot}
In this section, we use  {residual-based} bootstrap procedures to estimate the distribution of $$\frac{n}{t_{n}}\left( T_n - \mathfrak B_n -\mathfrak R_n \right)$$ under the null of exogeneity . To this end, we first estimate the residuals from the original data set and define
\begin{align*}
\hat U_i=Y_i-\langle\hat\beta_{IV},X_i\rangle,	\quad	i=1,\ldots,n,
\end{align*}
where we use the IV-based estimator, because it is consistent under the null hypothesis as well as under the alternative. However, using the classical estimator $\hat \beta$ would also result in a proper bootstrap scheme to approximate the distribution of the test statistics under the null of exogeneity, since the independence of error and regressor in the bootstrap sample is achieved by the  (fixed-design) bootstrap procedure itself. However, to get bootstrap data that mimics the true distribution {under the null hypothesis of exogeneity given the} original sample as close as possible, the IV-based estimator turns out to be more natural and performs better in simulations. In the sequel, different versions of residual-based bootstraps are considered. All bootstrap methods will follow these steps 

\begin{description}
\item[Step 1.)] Given i.i.d.~observations $(X_i,W_i,Y_i)$, $i=1,\ldots,n$, we generate a bootstrap sample $(X_i,W_i,Y_i^*)$, $i=1,\ldots,n$, by
\begin{align*}
Y_i^*=\langle\hat\beta_{IV},X_i\rangle+U_i^*,
\end{align*}
where  {the bootstrap errors} $U_i^*$ are generated from  {the residuals} $\hat U_1,\ldots,\hat U_n$ in such a way that the conditional independence of $U_i^*$ and $(X_i,W_i)$ is ensured. A thorough discussion, which types of bootstrap are appropriate  {in this sense} follows in the next subsection.
\item[Step 2.)] From $(X_i,W_i,Y_i^*)$, $i=1,\ldots,n$, a bootstrap test statistic $T_n^*$ is calculated.
\item[Step 3.)] Repeat Steps 1.) and 2.) $B$ times, where $B$ is large, to get bootstrap realizations $T_n^{*,1},\ldots,T_n^{*,B}$ of the test statistic and denote by $q_{1-\gamma}^*=T_n^{*,(\lfloor B(1-\gamma)\rfloor)}$ the corresponding empirical $(1-\gamma)$-quantile.
\end{description}

As the bootstrap errors are generated such that conditional independence of $U_i^*$ and $(X_i,W_i)$ is ensured, the bootstrap automatically adopts the exogeneity assumption. For the naive (Efron-type)residual bootstrap, this is trivially the case, because the bootstrap errors are drawn independently with replacement from the residuals, and for the wild bootstrap, since suitable bootstrap multiplier variables $V_i$ will also be drawn independently from $X_i$ and $W_i$.

\begin{theo}
\label{CLTboot}
Under the assumptions of Theorem \ref{CLTteststatistik} let $\mathcal S_n=\{(X_i,W_i,Y_i)\}_{i=1,\ldots,n}$ be a set of i.i.d.~copies of $(X,W,Y)$ with $\left( X,W \right) \in \mathcal F_{\eta}^{128}$ and $\e|U|^{128} \leq\eta<\infty$ and let $(t_n)_{n\in \mathbb N}$ from \eqref{tkn} fulfill $\lim_{n \to \infty} t_n = \infty$. Additionally, suppose that
\begin{align*}
& \frac1{t_n^4}\sum_{k\in\mathcal K_n}\left(\frac{x_kw_k}{|c_k|^2}-1\right)^4 =o(1),	\quad	\sum_{k \in \mathcal K_n} 
\left(x_k^2 \mathrm E |\langle \beta - \hat \beta_{IV}, \phi_k \rangle|^{4}\right)^{1/4}\frac{x_k^4 w_k^4}{|c_k|^8} = O(1),	\\
& \sum_{k \in \mathbb Z} 
\frac{x_k w_k^{1/2}}{|c_k|} < \infty,	\quad	\text{and}	\quad
\frac{1}{t_n} 
\sum_{k \in \mathcal K_n} 
\frac{x_k^{3/2} w_k}{|c_k|^2} = O(1) 
\end{align*} 
hold. Then, under both $H_0$ and $H_1$, we have
\begin{align*}
\sup_{t \in \mathbb R}
\left| P \left( \frac{n}{t_n} \left(T_n^* -\mathfrak B_n^* - \mathfrak R_n^* \right) \leq t \mid \mathcal S_n \right) 
- P_{H_0} \left( \frac{n}{t_n} \left(T_n -\mathfrak B_n - \mathfrak R_n \right) \leq t 
\right) \right|
\stackrel{\mathbb P}{\longrightarrow} 0,
\end{align*}
where $\mathfrak B_n^*$ and $\mathfrak R_n^*$ denote the bootstrap versions of $\mathfrak B_n$ and $\mathfrak R_n$ defined in Theorem \ref{CLTRn3} and $P_{H_0}$ is the distribution of $\frac{n}{t_n} \left(T_n -\mathfrak B_n - \mathfrak R_n \right) $ under $H_0$.
\end{theo}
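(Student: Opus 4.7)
The plan is to establish bootstrap consistency by mirroring the decomposition used in the proof of Theorem \ref{CLTteststatistik} in the bootstrap world. The central conceptual point, which explains why the result holds under both $H_0$ and $H_1$, is that the bootstrap sample $(X_i,W_i,Y_i^*)$ satisfies exogeneity by construction: the bootstrap errors $U_i^*$ are drawn conditionally independently of $(X_i,W_i)$, so the conditional law of the bootstrap data always imitates the exogenous model regardless of whether the original data are exogenous or endogenous. Moreover, because the bootstrap keeps $(X_i,W_i)$ fixed, the empirical (cross-)covariance operators $\Gamma_{X,n},\Gamma_{W,n},\Gamma_{WX,n}$ and all derived quantities $\hat x_k,\hat w_k,\hat c_k,\hat\lambda_k,\widetilde\Gamma_n$ coincide with their non-bootstrap versions, so only the objects that used $U_i$ and $\beta$ need to be replaced by their starred counterparts (with $U_i^*$ playing the role of $U_i$ and $\hat\beta_{IV}$ playing the role of $\beta$).

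First I would write down the bootstrap analog of the decomposition \eqref{ZerlTeststat}, obtaining terms $T_{n,1}^*,T_{n,2}^*,T_{n,3}^*$ and $R_n^*=\tilde\Gamma^{\dag}\tilde{\mathcal U}_n^* -\Gamma_X^{\dag}\mathcal U_{X,n}^*+A_n^*$. Next I would verify conditional versions of Propositions \ref{Rn1245}, \ref{Tn1}, \ref{Tn2} and \ref{Tn3}, redoing the moment estimates with $U_i$ replaced by $U_i^*$ and $\beta$ replaced by $\hat\beta_{IV}$. Since $\mathrm{E}^*[|U_i^*|^{128}]=\frac{1}{n}\sum_{i=1}^n|\hat U_i|^{128}=O_P(1)$ by consistency of $\hat\beta_{IV}$ and the $128$th moment assumption on $U$, the moment inputs required in those propositions survive the transition to the bootstrap. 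The new assumption $\sum_{k\in\mathcal K_n}(x_k^2\,\mathrm{E}|\langle\beta-\hat\beta_{IV},\phi_k\rangle|^4)^{1/4}x_k^4w_k^4/|c_k|^8=O(1)$ is precisely what is needed to absorb the cross terms created by replacing $\beta$ with $\hat\beta_{IV}$ in the bias-like components.

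The main step is a conditional CLT for $\frac{n}{t_n}R_{n,3}^*$, which is a quadratic form of the form
\[
\frac{1}{nt_n}\sum_{k\in\mathbb Z} \hat x_k I\{\hat\lambda_k\geq\alpha\gamma_k^\nu\}\sum_{i\neq j}D_{i,k}^*\Big(\hat\sigma U_i^*+\sum_{|m|\neq|k|} S_{i,m}^*\Big)\overline{D_{j,k}^*}\Big(\hat\sigma U_j^*+\sum_{|m|\neq|k|}\overline{S_{j,m}^*}\Big)
\]
in the conditionally independent multipliers $U_i^*$. I would apply a degenerate U-statistic CLT conditional on $\mathcal S_n$ by verifying a conditional fourth-moment (Lindeberg-type) condition, which, after taking expectations with respect to $U_i^*$, reduces to the imposed assumption $t_n^{-4}\sum_{k\in\mathcal K_n}(x_kw_k/|c_k|^2-1)^4=o(1)$ up to moment bounds on $\hat U_i$. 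The conditional variance converges to $\mathfrak V$ because $E^*[U_i^{*2}]=\frac{1}{n}\sum_i\hat U_i^2\xrightarrow{P}\sigma^2$ (which holds under both $H_0$ and $H_1$ as $\hat\beta_{IV}$ is consistent) and $\|\Gamma_{X,n}^{1/2}\hat\beta_{IV}\|^2\xrightarrow{P}\sum_m|\langle\beta,\phi_m\rangle|^2 x_m$ by the law of large numbers and consistency of $\hat\beta_{IV}$. A Slutsky argument analogous to the end of Theorem \ref{CLTteststatistik} then yields conditional asymptotic normality of $\frac{n}{t_n}(T_n^*-\mathfrak B_n^*-\mathfrak R_n^*)$ with the same limiting variance $\mathfrak V$ as under $H_0$, from which the uniform-in-$t$ statement follows by Polya's theorem.

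The hard part is this conditional CLT: one must control an infinite sum over Fourier modes $k\in\mathcal K_n$ of data-dependent, random coefficients $\hat x_k I\{\hat\lambda_k\geq\alpha\gamma_k^\nu\}$ while keeping the quadratic form degenerate in the bootstrap randomness. The indicator sets $\mathcal K_n$ must be replaced in the analysis by their empirical counterparts $\{\hat\lambda_k\geq\alpha\gamma_k^\nu\}$, and the difference between the two sets of indices must be shown to be negligible, using Assumption \ref{ass:regulari} and standard concentration estimates for the estimated eigenvalues. Once this set-replacement step is justified, the remaining conditional CLT is a routine but lengthy verification in the spirit of the proof of Theorem \ref{CLTRn3}.
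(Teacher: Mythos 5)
Your proposal is correct and follows essentially the same route as the paper: mirror the decomposition \eqref{ZerlTeststat} in the bootstrap world, prove a conditional CLT for $\frac{n}{t_n}R_{n,3}^*$ along the lines of Theorem \ref{CLTRn3}, show the remainder terms are conditionally negligible by redoing the moment bounds with $U_i^*$ in place of $U_i$ and $\hat\beta_{IV}$ in place of $\beta$, and pass to the uniform statement via Polya's theorem (the paper additionally routes both distribution functions through $\Phi_{\mathfrak V}$ by a triangle inequality, which is implicit in your argument). In fact your write-up makes explicit several points the paper's terse proof leaves unstated, such as the built-in exogeneity of the bootstrap sample, the $O_P(1)$ bound on the conditional $128$th moments of $U_i^*$, and the role of the extra summability assumptions in absorbing the $\beta\mapsto\hat\beta_{IV}$ replacement.
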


Based on this result, we can again construct a one-sided test for the hypotheses \eqref{nullhyp} which rejects the null hypothesis if $T_n>q_{1-\gamma}^*$ from Step 3 since $T_n,T_n^\ast\geq 0$ and both asymptotically have the same bias and variance.

\section{Generalization to other estimators and measuring goodness-of-fit}
While the above results are stated for the spectral-cut-off estimators as proposed in \cite{Joh13} and \cite{Joh16}, it is also possible to derive analogue results for other types of estimators like cut-off as in \cite{MSM05} or ones based on Tikhonov or ridge-type regularization. A quite general approach is given in \cite{Car06} with a sequence of regularization functions $f_n:[c_n,\infty)\to\mathbb R_0^+$ such that $f_n$ is decreasing on $[c_n,2z_1-z_2]$ where $(z_j)_{j\in\mathbb Z}$ are the eigenvalues of the relevant covariance operator and $(c_n)_{n\in\mathbb N}$ is a decreasing sequence of positive values with $c_n<z_1$. Furthermore $\lim_{n\to\infty}\sup_{z\geq c_n}|zf_n(z)-1|=o(1/\sqrt n)$ and $f_n$ is differentiable on $[c_n,\infty)$ which replaces Assumption \ref{ass:regulari}. While the estimator $\hat\beta$ from (\ref{eq:betadach}) above does not completely fit this situation it is not neccessary to consider this modification if one is only interested in testing goodness-of-fit in an exogeneous model (\ref{eq:model}). For the sake of shorter notation we assume here $\mu_X\equiv0$. If $\tilde\beta$ denotes the estimator proposed in \cite{Car06} we obtain under Assumption \ref{ass:secstat} and the moment conditions in Theorem \ref{CLTteststatistik} the following result
\[\frac{n}{s_{n}}\left(\frac{1}{n}\sum_{i=1}^n\left|\left\langle\tilde\beta-\beta,X_i\right\rangle\right|^2 -\mathfrak R_n \right)
\stackrel{\mathcal D}{\to} \mathcal N(0, \mathfrak V)\]
with $s_n=\sum_{k\in\mathbb Z}x_k^4f_n^4(x_k)$, $\mathfrak V$ as in Theorem \ref{CLTteststatistik} and
\[\mathfrak R_n=\frac{1}{n} 
\left( \sigma^2 + \sum_{m \in \mathbb Z} |\langle \beta, \phi_m \rangle |^2 x_m \right)
\sum_{k \in \mathbb Z} 
f_n(x_k)=O\left(\frac1{\sqrt n}\right).\]
If is straight forward to also generalize the instrumental variable estimator to other regularization schemes. We get an estimator
\[\tilde\beta_{IV}=\sum_{k\in\mathbb Z}\hat g_kf_n(\hat\lambda_k)\]
and, if we are willing to assume exogeneity here, derive by the same arguments as above
\[\frac{n}{s_{n,IV}}\left(\frac{1}{n}\sum_{i=1}^n\left|\left\langle\tilde\beta_{IV}-\beta,X_i\right\rangle\right|^2 -\mathfrak R_n \right)
\stackrel{\mathcal D}{\to} \mathcal N(0, \mathfrak V)\]
with $s_{n,IV}=\sum_{k\in\mathbb Z}x_k^2\lambda_k^2f_n^4(\lambda_k)$, $\mathfrak V$ as in Theorem \ref{CLTteststatistik} and
\[\mathfrak R_n=\frac{1}{n} 
\mathfrak V^{1/2}
\sum_{k \in \mathbb Z} 
x_k\lambda_kf_n(\lambda_k)=O\left(\frac1{\sqrt n}\right).\]
The assumption of exegeneity is in this case not realistic because one would only use the instrumental variable estimator under endogeneity. Proving an analogue result under endogeneity is in principle possible but the proof differs in several points from the one presented here.\\

Using the estimators $\tilde\beta$ and $\tilde\beta_{IV}$ we can construct a test statistic similar to the one above. To this end we need a similar regularization scheme for both estimators. If we allow for a second argument in $f_n$ the estimators involved in the test above can also be written with $f_{n,1}(x_k,\lambda_k)=\frac1{x_k}I\{\lambda_k\geq\alpha\gamma_k^\nu\}$ for $\hat\beta$ and $f_{n,2}(x_k,\lambda_k)=\frac1{\lambda_k}I\{\lambda_k\geq\alpha\gamma_k^\nu\}$ for $\hat\beta_{IV}$ and it is straight forward to generalize them at least to regularisation functions of type $f_{n,1}(x_k,\lambda_k)=g_1(x_k,\lambda_k)\tilde f_n(\lambda_k)$ respectively $f_{n,2}(x_k,\lambda_k)=g_2(x_k,\lambda_k)\tilde f_n(\lambda_k)$. Under Assumption \ref{ass:secstat}, the moment assumptions of Theorem \ref{CLTteststatistik} and certain regularity conditions we derive under the null hypthesis

\[ \frac{n}{\tilde t_{n}}\left( T_n - \mathfrak B_n -\mathfrak R_n \right)
\stackrel{\mathcal D}{\to} \mathcal N(0, \mathfrak V), \]
where 
$$\tilde t_n=\sum_{k\in\mathbb Z}(\lambda_kg_2^2(x_k,\lambda_k)-2\lambda_kg_1(x_k,\lambda_k)g_2(x_k,\lambda_k)+x_kg_1(x_k,\lambda_k))^2{\tilde f_n}^2(\lambda_k) ,$$
$\mathfrak V$ as in Theorem \ref{CLTteststatistik} and
\begin{align*}
\mathfrak R_n&=\frac{1}{n} 
\mathfrak V^{1/2}
\sum_{k \in \mathbb Z} 
\left(\lambda_kg_2^2(x_k,\lambda_k)-2\lambda_kg_1(x_k,\lambda_k)g_2(x_k,\lambda_k)+x_kg_1(x_k,\lambda_k)\right) \tilde f_n(\lambda_k).
\end{align*}

For all results presented in this section it is again straight forward to derive empirical versions and bootstrap results.

\section{Finite sample properties}
\label{se:finite}
In this section, we investigate the finite sample behavior of the tests proposed above under several degrees of endogeneity and for different slope functions. We generate our data from the model

\begin{align*}
X(t)= \left(t + \frac{1}{2}\right) Z_{1},	\quad	W(t) = \left(t + \frac{1}{2}\right) Z_{2}+ H   
\end{align*} 
and
\[ Y =
\frac{1}{p+1} \sum_{l=0}^p X(l/p+1)\cdot \beta(l/p) + \sigma\cdot \varepsilon, \]
for some $p$ large enough to approximate the integral sufficiently well. To control all correlations in the model, we generate i.i.d.~copies of
\begin{equation*}
\begin{pmatrix} Z_1 \\ Z_2 \\ U \end{pmatrix}
\sim \mathcal N_3 \left(
\begin{pmatrix} 0 \\ 0 \\ 0 \end{pmatrix}
,
\begin{pmatrix}
3 & \nu\sqrt 6 & \rho \sqrt 3 \\
 \nu\sqrt 6 & 2 & 0  \\
 \rho \sqrt 3 & 0 & 1  \\
\end{pmatrix} \right) 
\end{equation*}
with $corr(Z_{1},Z_{2}) = \nu, \; corr(Z_{1},U) = \rho $, see \cite{Won96}. The
random variable $H$ is uniformly distributed on $(-1/2,1/2)$ and independent of
$(Z_1,Z_2,\varepsilon)^\prime$. The parameter $\rho$ controls the severity of endogeneity
(if $\rho=0$ we are in the exogenous case, i.e.~under the null $H_0$) and $\nu$ the
strength of the instrument $W$. The standard deviation is assumed to be
$\sigma=7/5$.
In the following, as illustrated in Figure \ref{graphik_betas}, we will use three different slope functions $\beta_1,\beta_2$ and $\beta_3$ defined by
\begin{align}
\beta_1(t) &= \sin(4 \pi t) + \frac12 \sin\left(8 \pi t\right) + \frac{1}{7} \sin\left(20
\pi t\right) \nn, \\
\beta_2(t) &= \frac{2}{\pi} \arcsin(\cos(2 \pi t)) \nn, \\
\beta_3(t) &= \sum_{n \in \mathbb Z}
              \int_{\mathbb R}
                 r_n(s) k_{n,h}(t-s) \, ds,  
\label{betas}
\end{align}
\begin{figure}
\includegraphics[width=\linewidth]{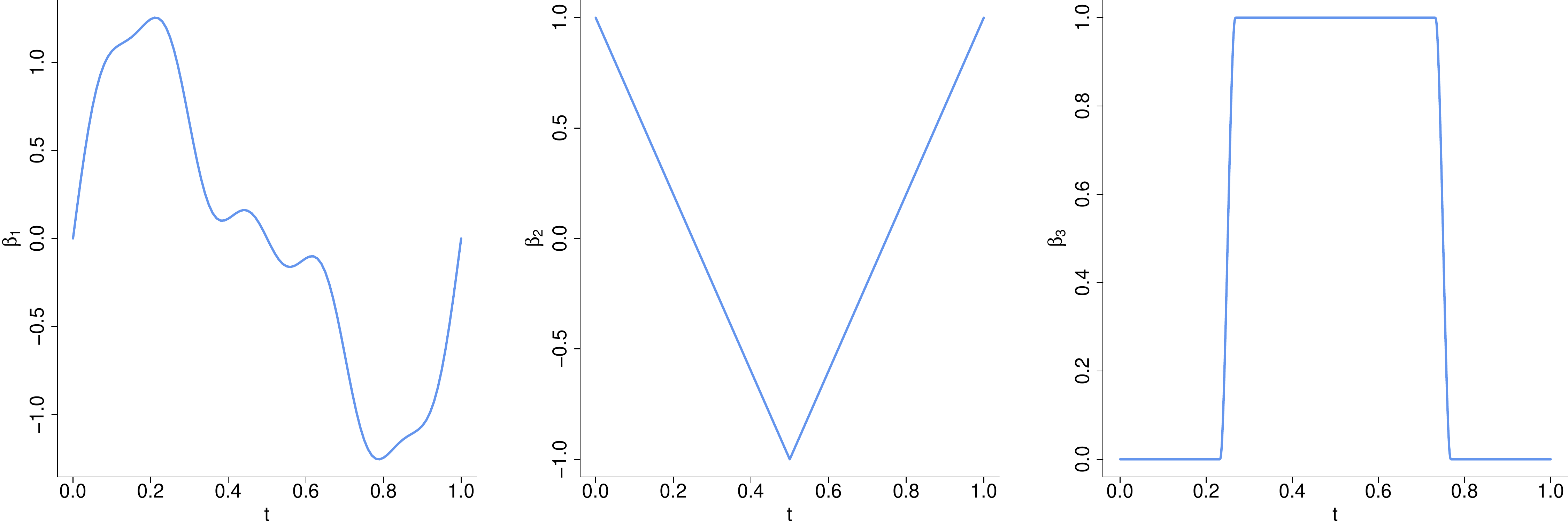}
\caption{\label{graphik_betas} Slope functions $\beta_1,\beta_2$ and $\beta_3$ as defined in \eqref{betas}.}
\end{figure}
where in $\beta_3$, $r_n(t) = I_{\big\{ n+\frac14, n + \frac34 \big\}} (t)$ and $k_{n,h} (t) = \frac{1}{h} k_n
(\frac{t}{h})$ with $$k_n (t) = \frac{1}{C} \exp \left( -\frac{1}{1- (t-2n)^2} \right)
                           I_{(-1+2n, 2n+1)} (t)$$ 
and $C = \int_{\mathbb R} k_0(s) \, ds$. For all simulations, we generate 1000  {Monte Carlo} realizations and use $B=500$ bootstrap replications.

Besides an Efron-type residual-based bootstrap, which draws the bootstrap errors $U_i^*$, $i=1,\ldots,n$ independently with replacement from the residuals $\hat U_1,\ldots,\hat U_n$, we consider also several versions of a residual-based wild bootstrap, where
\begin{align*}
U_i^*=V_i\hat U_i,	\quad i=1,\ldots,n
\end{align*}
and the $V_i$'s are i.i.d.~with $\E[V_1]=0$ and
$\E[V_1^2]=1$ and independent of $(X_i,W_i,Y_i)_{i=1,\ldots,n}$. We consider different choices  {for the distribution} of the $V_i$'s as commonly used in the literature, see e.g.~\cite{Mam93},
\begin{align}
a)&\quad P\left(V_1=\frac{-
(\sqrt{5}-1)}{2}\right)=\frac{\sqrt{5}+1}{2 \sqrt{5}},\quad P\left(V_1=\frac{\sqrt{5}+1}{2}\right)=\frac{\sqrt{5}-1}{2 \sqrt{5}},\label{goldschn}\\
b)&\quad P(V_1=1)=0.5=P(V_1=-1),\label{radem}\\
c)&\quad V_1\sim \mathcal N(0,1).\label{norm}
\end{align} 


\begin{figure}
\includegraphics[width=\textwidth]{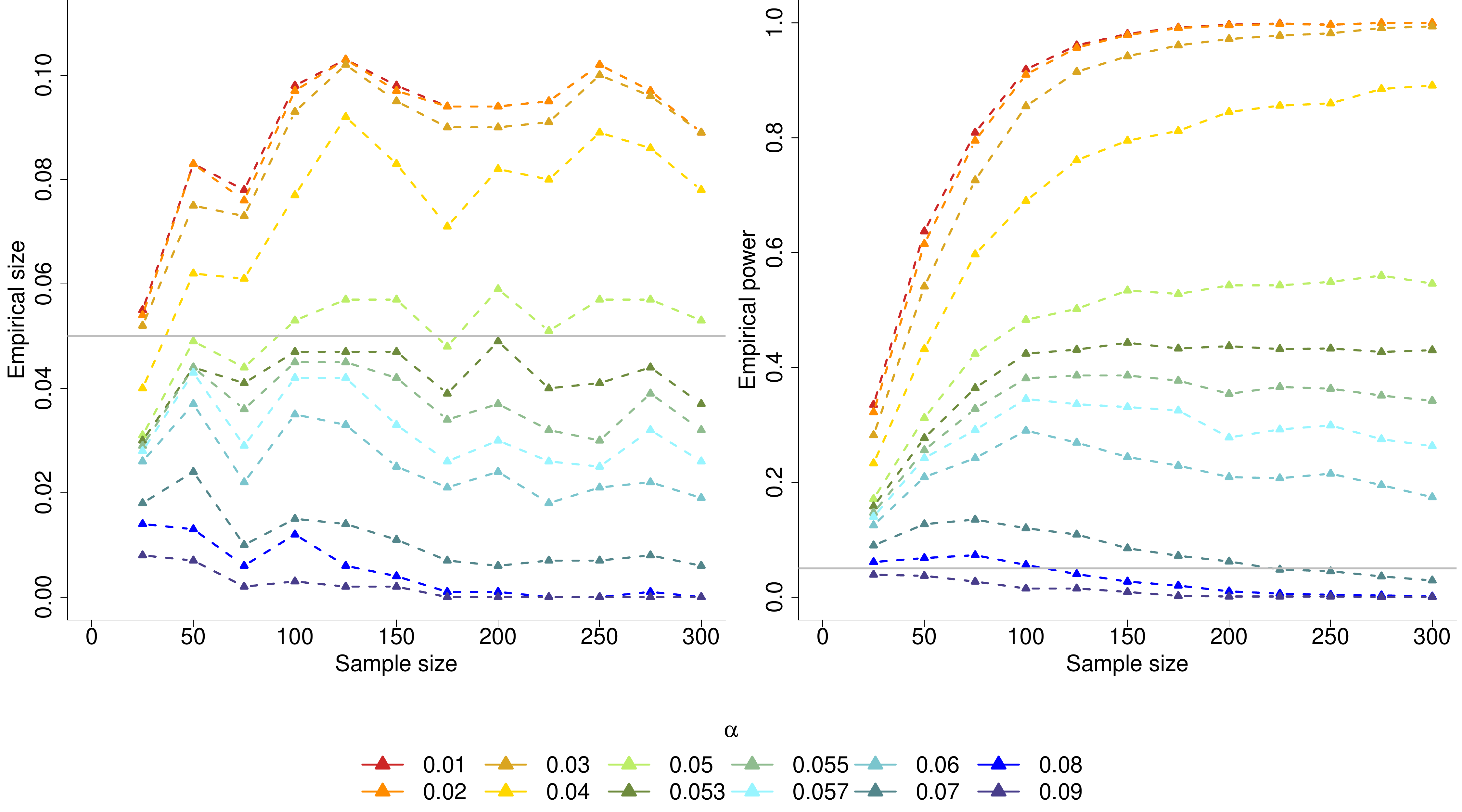}
\caption{\label{asympt_div_alpha}Empirical size and power of the asymptotic test for several choices of $\alpha$. The gray solid line shows the target level $\gamma=0.05$. The true slope parameter function is $\beta_1$.}
\end{figure}

In a first step we try to get an idea how to choose $\alpha$ and, in a next step how to choose $\mathcal
K_n$. To this end, we fix the degree of endogeneity with $\rho=0.4$ and the strength of
the instrument with $\nu=0.6$. In Figure \ref{asympt_div_alpha},
the results for the asymptotic test using $\beta_1$ as slope
parameter and different choices of $\alpha$ are shown. We see that the best results are obtained for $\alpha$ between $0.05$ and $0.055$. For smaller $\alpha$, the test does not hold the {prescribed} level, while for larger $\alpha$ the power is comparably small up to biased tests for $\alpha$ larger than $0.07$. Based on Figure \ref{asympt_div_alpha}, we can find a sequence of good choices for $\alpha$ depending on the sample size varying from $\alpha=0.04$ for $n=25$ to $\alpha=0.053$ for larger sample sizes up to $300$.
We see that the asymptotic test has only moderate power even for larger sample sizes. This is a well known effect with asymptotic tests using plug-in estimators. 

The way out is typically a bootstrap-based test. The results for the residual-based bootstraps proposed in Section \ref{se:boot} and again $\beta_1$ are shown in Figure \ref{bootstrap_div_alpha}.
\begin{figure}
\includegraphics[width=\textwidth]{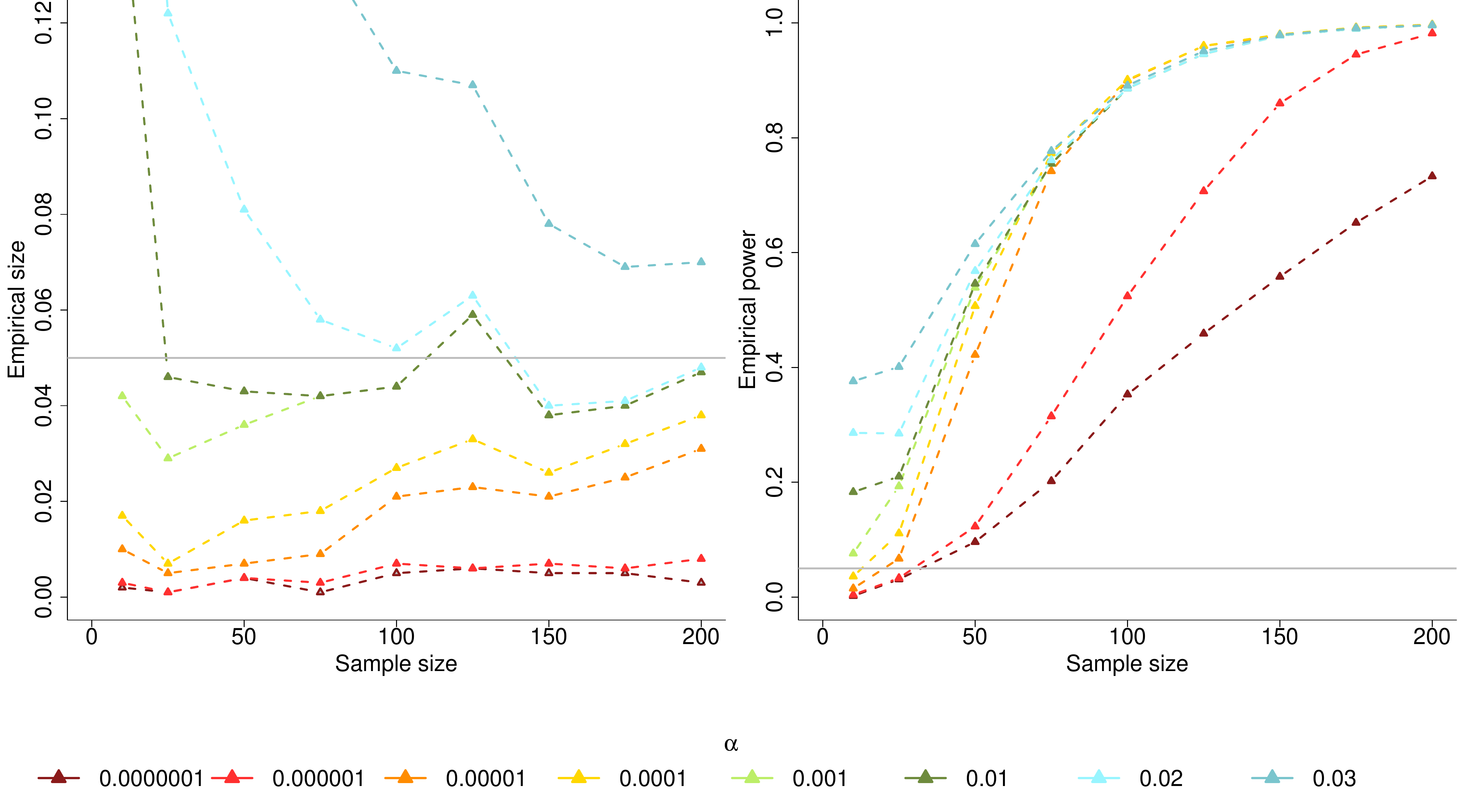}
\caption{\label{bootstrap_div_alpha}Empirical size and power of the bootstrap tests for several choices of $\alpha$. The gray solid line shows the target level $\gamma=0.05$. The true slope function is $\beta_1$.}
\end{figure}
It turns out, that the regularization parameter can be chosen  {considerably} smaller than
for the asymptotic test and the procedure is much more robust in choosing
$\alpha$.
Nearly all tests hold the size of $\gamma=0.05$ for larger sample sizes and the power increases with sample size for most choices of $\alpha$ up to a value close to 1 already for $n=300$.  Again we can get an idea of choosing a good $\alpha$ depending on the sample size which varies from $\alpha=0.01$ for $n=25,50$ to $\alpha=0.0001$ for $n=75,100,200$ and $300$.

Apparently all bootstrap procedures discussed in Section \ref{se:boot} perform comparably good which can be seen in Figure \ref{comp_boots} for a choice of $\alpha=0.0001$.

\begin{figure}
\includegraphics[width=\textwidth]{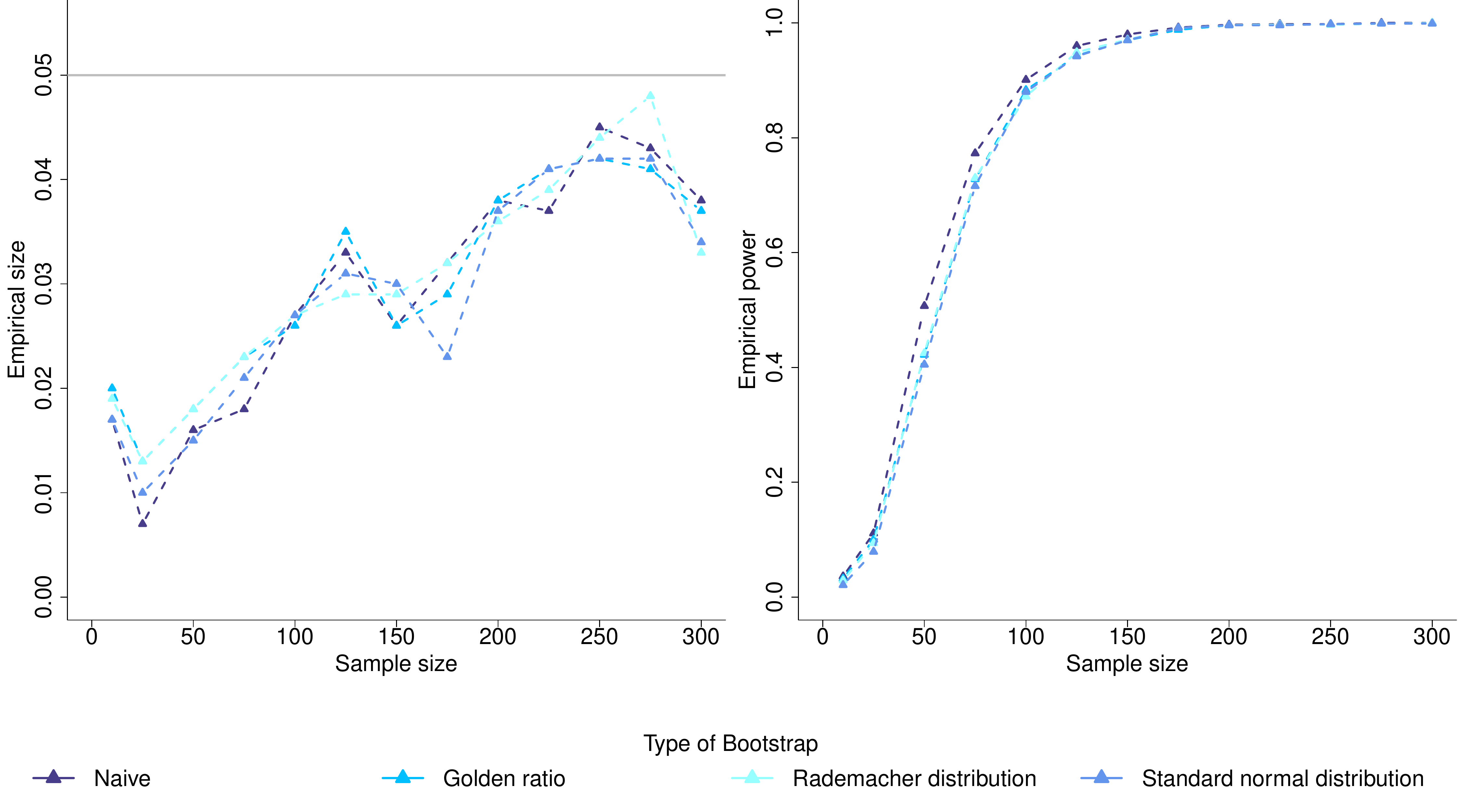}
\caption{\label{comp_boots}Empirical power and size of the bootstrap tests for several choices of $\alpha$. The gray solid line shows the predetermined level $\gamma=0.05$ of the test}
\end{figure}

Comparing the performance of the bootstrap test for different slope functions, we discover that in all models the bootstrap test holds the size $\gamma=0.05$ while we see in Table \ref{bootdivbeta} that the power is similarly good for all settings with only sligh disadvantages for the smoothed indicator function $\beta_3$.
\begin{table}[t]
\resizebox{\columnwidth}{!}{%
 \begin{tabular}{c|c|c|c|c|c|c|c|c|c|c|c|c} 
\multicolumn{13}{c}{$n$ }  \\
 & 25    & 50   & 75   & 100   & 125   & 150   & 175    & 200   & 225 & 250 &275 &300 \\
\hline
   $\beta_1$ & 0.111  & 0.507 & 0.773 & 0.901 & 0.960 & 0.980 & 0.992  & 0.997  &
    0.998 & 0.998 & 1 & 1 \\
   $\beta_2$ & 0.164  & 0.568 & 0.798 & 0.912 & 0.958 & 0.979 & 0.992  & 0.997   &
    0.999 & 0.998 & 1 & 1 \\
  $\beta_3$ & 0.255  & 0.560 & 0.733 & 0.853 & 0.904 & 0.961 & 0.978  & 0.990 & 0.993 &
    0.994 & 0.997 & 0.998\\
 \end{tabular}}
  \caption{Empirical power of the bootstrap tests for slope functions defined in \eqref{betas} using
  $\rho=0.4$, $\nu = 0.6$ and $\alpha = 0.0001$.}
  \label{bootdivbeta} 
 \end{table}
 Finally, we inspect the influence of the degree of endogeneity and the strength of the instrument on the performance of the test. In Figure \ref{bootdivrho}, we see that the power of the bootstrap test increases with increasing degree $\rho$ of endogeneity being already acceptable for $\rho=0.3$.
 \begin{figure}[h] 
\includegraphics[width=\linewidth]{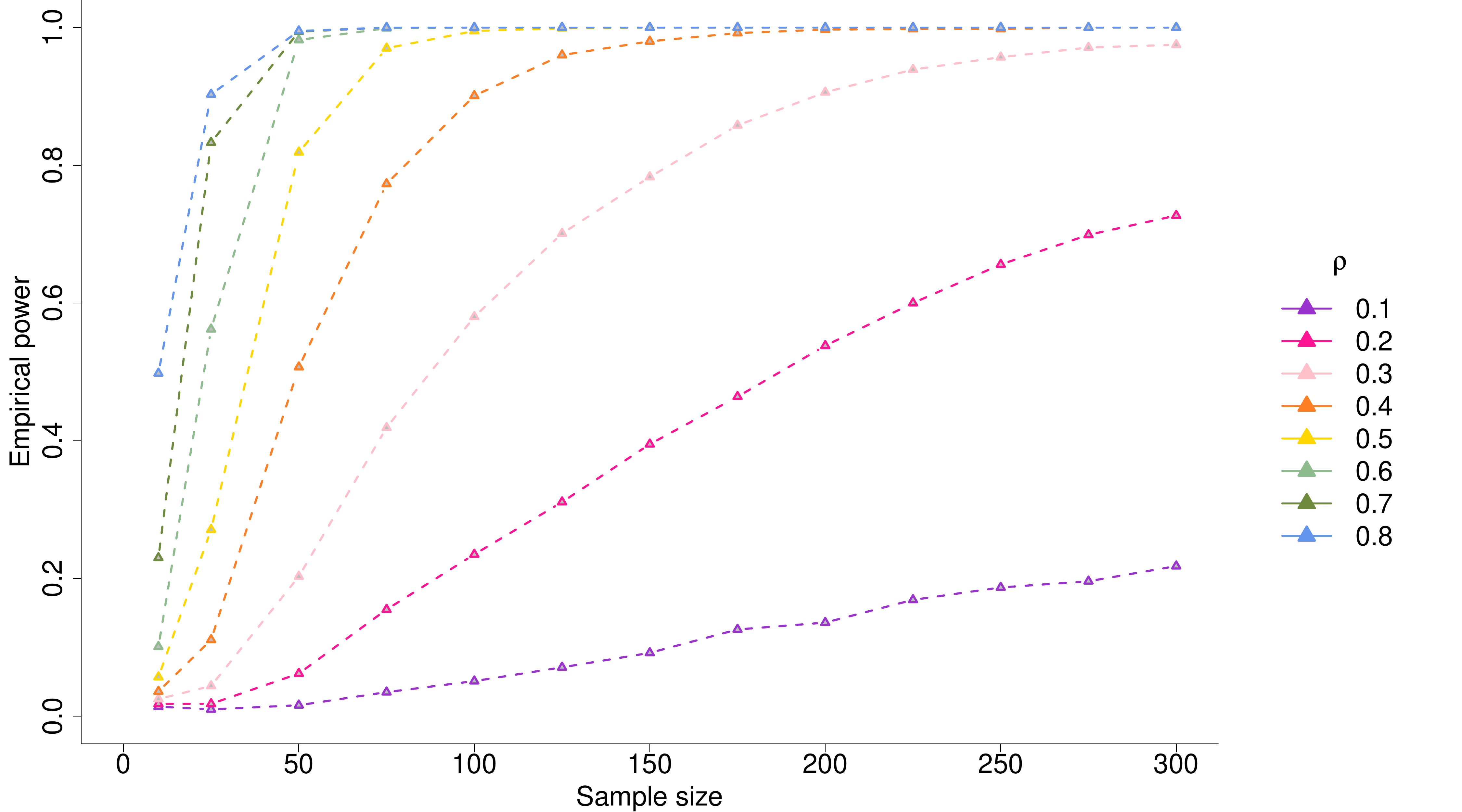}
\caption{Power of the bootstrap test for different degrees $\rho$ of endogeneity}
\label{bootdivrho}
\end{figure} 
Figure \ref{bootdivnu} shows, that the performance of the test is highly dependent on the strength of the instrument.
If the instrument is too weak, the power is
too low and the test does not hold the size. It turns out, that for the setting with slope function $\beta_1$, $\rho=0.4$ and $\alpha=0.0001$, the bootstrap test performs best for a strength of the instrument around $\nu=0.7$.
\begin{figure}[h]
\includegraphics[width=\linewidth]{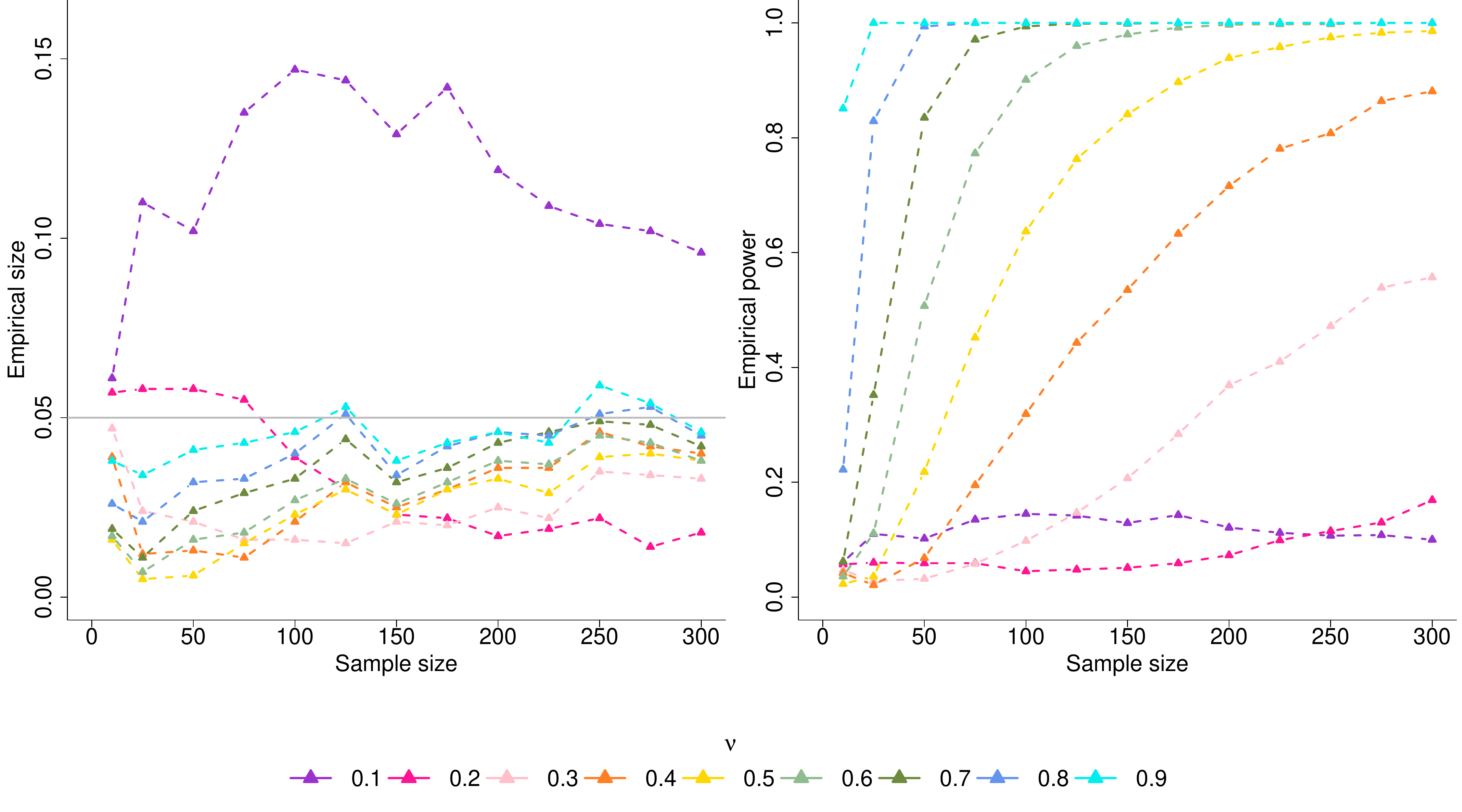}
\caption{Power and size of the bootstrap test for different strengths $\nu$ of the instrument}
\label{bootdivnu}
\end{figure}

\section{Concluding remarks}\label{se:out}
The underlying work is the first approach of testing for endogeneity in a functional regression setup by introducing a modified approach of the classical Hausman test in a multiple linear regression model. {This modification is required, because the $L_2$-distance of two slope function estimators in functional linear regression models are shown to have no proper limiting distribution. We prove asymptotic normality for the proposed modified Hausman-type test statistic, which allows for the construction of asymptotic tests for exogeneity. As the asymptotic test has several drawbacks such as many nuisance parameters, which are cumbersome to estimate, an additional bias term, which diverges when multiplied with the rate of convergence, and a high sensitivity to the choice of the regularization parameter, we propose suitable bootstrap versions of the test to approximate the null distribution. This avoids the additional estimation of nuisance parameters and turns out to be much more robust to the choice of the regularization parameter.} This behavior is demonstrated in a detailed simulation study. Topics of ongoing work are the choice of the instrument, a data driven choice of the regularization parameter and the transfer to other regression models.

\begin{appendix}\label{appA}

\section{Auxiliary Results for the Proof of Theorem \ref{CLTteststatistik}}
We assume for the sake of simplicity $E[X(t)]=E[W(t)]=0$ for all $t\in[0,1]$ and remember from Section \ref{se:modtest} the decomposition of the test statistic with
\begin{align}
R_{n,1} &=
\frac{1}{n^2} \sum_{k \in \mathbb Z} 
\left( \hat x_k - x_k \right)
 \Big| \sum_{i=1}^n D_{i,k} I\{ \lambda_k \geq \alpha \gamma_k^\nu \} 
 \Big( \sigma U_i + \sum_{\substack{ m \in \mathbb Z, \\ |m| \neq |k|}}
 S_{i,m}\Big)\Big|^2 \nn \\
R_{n,2} &=
\frac{1}{n^2} \sum_{k \in \mathbb Z} 
x_k I\{ \lambda_k \geq \alpha \gamma_k^\nu \}
 \sum_{i=1}^n |D_{i,k}|^2 
 \Big| \sigma U_i + \sum_{\substack{ m \in \mathbb Z, \\ |m| \neq |k|}} S_{i,m}\Big|^2 \nn,
 \\
R_{n,3} &=
\frac{1}{n^2} \sum_{k \in \mathbb Z} x_k I\{ \lambda_k \geq \alpha \gamma_k^\nu
\}
\sum_{\substack{i,j=1,\\ i \neq j}}^n 
D_{i,k} 
\Big( \sigma U_i + \sum_{\substack{ m \in \mathbb Z, \\ |m| \neq |k|}} S_{i,m}\Big)
\overline{D_{j,k}} 
\Big( \sigma U_j + \sum_{\substack{ m \in \mathbb Z, \\ |m| \neq |k|}} \overline{S_{j,m}}\Big)
 \nn, \\
R_{n,4} &= \frac{1}{n^3} \sum_{\substack{k,l \in \mathbb Z,\\ |k| \neq |l|}}\sum_{j=1}^n 
\langle \phi_k,X_j \rangle
\langle X_j, \phi_l \rangle 
I\{ \lambda_k \geq \alpha \gamma_k^\nu \}
I\{ \lambda_l \geq \alpha \gamma_l^\nu \}\\&\phantom{\frac{1}{n^3} \sum_{\substack{k,l \in \mathbb Z,\\ |k| \neq |l|}}\sum_{j=1}^n 
\langle \phi_k,X_j \rangle}\times
\sum_{i=1}^n 
D_{i,k} 
\Big( \sigma U_i + \sum_{\substack{ m \in \mathbb Z, \\ |m| \neq |k|}} S_{i,m}\Big)
\overline{D_{i,l}} 
\Big( \sigma U_i + \sum_{\substack{ m \in \mathbb Z, \\ |m| \neq |l|}}
\overline{S_{i,m}}\Big)\nn,
\\
R_{n,5}&={\frac{1}{n^3} \sum_{\substack{k,l \in \mathbb Z,\\ k \neq l}}\sum_{j=1}^n
\langle \phi_k,X_j \rangle } 
\sum_{\substack{i_1,i_2=1,\\ i_1 \neq i_2}}^n 
D_{i_1,k}  
\Big( \sigma U_{i_1} + \sum_{\substack{ m \in \mathbb Z, \\ |m| \neq |k|}} S_{i_1,m}\Big)
\ov{D_{i_2,l}} 
\Big( \sigma U_{i_2} + \sum_{\substack{ m \in \mathbb Z, \\ |m| \neq |l|}}
\ov{S_{i_2,m}}\Big)  \label{ZerlRn}
\end{align} 
and define
\begin{align}
D_{i,k,n} 
&= 
\frac{\langle W_i, \phi_k \rangle}{\hat c_k}
I\{ \hat w_k \geq \alpha \}
-
\frac{1}{\hat x_k} \langle X_i, \phi_k \rangle, \label{Dn}\\
D_{i,k} 
&= 
\left(\frac{\langle W_i, \phi_k \rangle}{c_k}
-
\frac{1}{x_k} \langle X_i, \phi_k \rangle \right)  \label{D},\\
S_{i,m} 
&=  
 \langle \beta, \phi_m \rangle \langle \phi_m, X_i \rangle. \label{S}
\end{align} 
The first result establishes the asymptotic distribution of the test statistic.

\begin{theo}
Under the assumptions of Theorem \ref{CLTteststatistik}, under the null hypothesis, and for $(X,W) \in \mathcal F_{\eta}^4$ and $E[X(t)]=E[W(t)]=0$ for all $t\in[0,1]$, we have
\[ \frac{n}{t_{n}}R_{n,3}
\Dkonv \mathcal N(0, \mathfrak V).\]
 
\label{CLTRn3}
\end{theo}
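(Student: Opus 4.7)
The plan is to recognize $\tfrac{n}{t_n} R_{n,3}$ as a centered degenerate $U$-statistic (equivalently, a bilinear form in the i.i.d.\ sample) and apply a martingale central limit theorem. Define the scalar random variables
\[
\xi_i^{(k)} := D_{i,k}\bigl(\sigma U_i + \sum_{|m|\ne |k|} S_{i,m}\bigr), \qquad V_{ij} := \sum_{k\in\mathcal K_n} x_k\, \xi_i^{(k)}\,\overline{\xi_j^{(k)}},
\]
so that $\tfrac{n}{t_n}R_{n,3} = \tfrac{1}{n\,t_n}\sum_{i\ne j} V_{ij} = \tfrac{2}{n\,t_n}\sum_{i<j}\Re V_{ij}$. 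Since $R_{n,3}$ is manifestly real, the symmetrization $\tilde V_{ij}:=\Re V_{ij}$ is harmless and produces an honest $U$-statistic kernel in $(X_i,W_i,U_i)$ and $(X_j,W_j,U_j)$.

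The first essential step is to verify the degeneracy: $E[\xi_i^{(k)}]=0$ for every $k\in\mathcal K_n$. This uses $E[U_i]=0$, the null $H_0:E[X_i(t)U_i]=0$, the identity $E[W_i(t)U_i]=0$, and, crucially, the diagonalization of $\Gamma_X,\Gamma_W,\Gamma_{WX}$ in the common Fourier basis $(\phi_k)$ together with the restriction $|m|\ne |k|$ in $A_{i,k}$, which kills the only surviving cross-covariances. With centering in hand, letting $\mathcal F_i=\sigma\{(X_\ell,W_\ell,U_\ell):\ell\le i\}$ and
\[
\eta_i := \frac{2}{n\,t_n}\sum_{j<i}\tilde V_{ij},
\]
the sequence $(\eta_i,\mathcal F_i)_{i=2}^{n}$ is a martingale difference sequence, because $E[\tilde V_{ij}\mid\mathcal F_{i-1}]=0$ for $j<i$ by independence of the $i$-th vector from $\mathcal F_{i-1}$ and $E[\xi_i^{(k)}]=0$.

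The second step is to apply the martingale CLT (Brown / Hall--Heyde), which requires (a) $\sum_i E[\eta_i^2\mid\mathcal F_{i-1}]\to\mathfrak V$ in probability and (b) a Lindeberg / Lyapunov condition. For (a), expanding the square produces a sum over four indices $(j,j',k,l)$; by conditional independence, only pairs with $j=j'$ survive after taking expectations over the $i$-th vector, and the Fourier diagonalization together with the Hermitian symmetry $\phi_{-k}=\overline{\phi_k}$ collapses the $(k,l)$-sum to the diagonal $|l|=|k|$. Using $E|D_{1,k}|^2=(x_k w_k/|c_k|^2-1)/x_k$ and $E[|A_{1,k}|^2]\to\sigma^2+\sum_m|\langle\beta,\phi_m\rangle|^2x_m=\mathfrak V^{1/2}$, the leading order of $\sum_i E[\eta_i^2\mid\mathcal F_{i-1}]$ is $\tfrac{1}{n^2 t_n^2}\cdot \tfrac{n(n-1)}{2}\cdot 2\sum_{k\in\mathcal K_n}x_k^2\,(E|D_{1,k}|^2)^2\,E[|A_{1,k}|^2]^2$, which equals $\mathfrak V\cdot t_n^2/t_n^2=\mathfrak V$ up to $o_P(1)$. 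Condition (b) is handled via Lyapunov's fourth-moment criterion: a direct fourth-moment bound shows $\sum_i E\eta_i^4\lesssim (n\,t_n)^{-4}\cdot n^3\sum_{k\in\mathcal K_n}(x_kw_k/|c_k|^2-1)^4$, and the assumption $t_n^{-4}\sum_{k\in\mathcal K_n}(x_kw_k/|c_k|^2-1)^4=o(1)$ together with the $128$-th moment hypothesis (giving ample room for all finite-order cumulant bounds) delivers $\sum_i E\eta_i^4=o(1)$.

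The main technical obstacle is the off-diagonal bookkeeping in step (a): when expanding $E[V_{ij}\overline{V_{ij'}}\mid\mathcal F_{i-1}]$ one encounters cross terms indexed by pairs $(k,l)$ with $|k|\ne|l|$ and by pairs $(j,j')$ with $j\ne j'$ that are not immediately eliminated by independence alone; they must be shown to be negligible by combining the summability hypotheses $\sum_k x_k^2 w_k/|c_k|^2<\infty$ and $\sum_k|\langle\beta,\phi_k\rangle|x_k^{3/2}w_k/|c_k|^2<\infty$ from Theorem~\ref{CLTteststatistik} with Cauchy--Schwarz estimates. The complex-conjugation identities $\xi_i^{(-k)}=\overline{\xi_i^{(k)}}$ (coming from $\phi_{-k}=\overline{\phi_k}$, $c_{-k}=\overline{c_k}$, $w_{-k}=w_k$, $x_{-k}=x_k$) must be tracked carefully so that the counting of ``diagonal'' contributions picks up exactly the factor that rescales $t_n^2$ to $\mathfrak V$, rather than an extraneous constant. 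Once these bookkeeping issues are under control, Slutsky's lemma and the martingale CLT yield the stated weak convergence $\tfrac{n}{t_n}R_{n,3}\Dkonv\mathcal N(0,\mathfrak V)$.
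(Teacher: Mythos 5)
Your proposal follows essentially the same route as the paper: both write $\tfrac{n}{t_n}R_{n,3}$ as a sum of martingale differences $\eta_i$ with respect to the natural filtration $\sigma\{(X_\ell,W_\ell,U_\ell):\ell\le i\}$, prove convergence of the conditional variance to $\mathfrak V$ by isolating the diagonal $j=j'$, $|k|=|l|$ contributions (which reproduce $t_n^2$ via $x_k^2\bigl(\mathrm{E}|D_{1,k}|^2\bigr)^2=\bigl(x_kw_k/|c_k|^2-1\bigr)^2$), and verify the conditional Lindeberg condition through the unconditional fourth-moment Lyapunov bound using $t_n^{-4}\sum_{k\in\mathcal K_n}(x_kw_k/|c_k|^2-1)^4=o(1)$. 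The degeneracy check, the off-diagonal Cauchy--Schwarz estimates, and the conjugation bookkeeping you flag are exactly the technical content of the paper's Propositions on the conditional variance and the Lindeberg condition, so no substantive gap remains.
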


The remaining results are to show, that the remainder terms are negligible.

\begin{prop}\label{Tn1}
Let $\left( X,W \right)\in \mathcal F_{\eta}^{128}$ and $\mathrm E|U|^{128} \leq\eta<\infty$. Under the assumptions of Theorem \ref{CLTteststatistik}, we have
\begin{equation*}
\frac{1}{n} \sum_{j=1}^n 
\left| \left\langle
T_{n,1}
,X_j\right\rangle
\right|^2 = o_P \left( \frac{1}{n}\right).
\end{equation*}
\end{prop}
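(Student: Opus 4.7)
My starting point is the Parseval identity
\[
\frac{1}{n}\sum_{j=1}^n|\langle T_{n,1},X_j\rangle|^2 = \langle T_{n,1},\Gamma_{X,n}T_{n,1}\rangle = \sum_{k\in\mathbb Z}\hat x_k\,|\langle T_{n,1},\phi_k\rangle|^2,
\]
which reduces the quadratic form to a spectral sum. By construction, $T_{n,1}$ is the difference between the fully empirical object $\tilde\Gamma_n^\dag\hat{\tilde{\mathcal U}}_n - \Gamma_{X,n}^\dag \mathcal U_{X,n}$ and its oracle analogue $\tilde\Gamma^\dag\tilde{\mathcal U}_n - \Gamma_X^\dag\mathcal U_{X,n}$ truncated to the oracle set $\mathcal K_n$ via $\hat\Pi_{\mathcal K_n}$. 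Consequently, $\langle T_{n,1},\phi_k\rangle$ vanishes to leading order on $\mathcal K_n$, leaving only fluctuation terms, and the consistency of $\hat x_k$ for $x_k$ lets me replace $\hat x_k$ by $x_k$ up to a negligible factor in the sum above.

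The next step is to expand $\langle T_{n,1},\phi_k\rangle$ explicitly. Using the telescoping identities
\[
\frac{1}{\hat\lambda_k}-\frac{1}{\lambda_k} = -\frac{\hat\lambda_k-\lambda_k}{\hat\lambda_k\lambda_k},\qquad \frac{1}{\hat x_k}-\frac{1}{x_k} = -\frac{\hat x_k-x_k}{\hat x_k x_k},
\]
together with the analogous decomposition of $\hat c_k/\hat w_k - c_k/w_k$, I would write the $k$-th coefficient as a finite sum of products of sample increments $\hat x_k - x_k$, $\hat w_k - w_k$, $\hat c_k - c_k$ and of linear functionals of $\hat{\tilde{\mathcal U}}_n - \tilde{\mathcal U}_n$ and $\mathcal U_{X,n}$, multiplied by deterministic factors involving $1/\lambda_k$, $1/x_k$, $c_k/w_k$ and the threshold indicators. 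Each such term is then bounded by Cauchy--Schwarz and Rosenthal/Marcinkiewicz--Zygmund-type moment inequalities under the heavy moment assumption $(X,W)\in\mathcal F_\eta^{128}$ and $E|U|^{128}\leq\eta$, yielding entry-wise estimates of the form $E|\hat c_k-c_k|^2=O(|c_k|/n)$ and similar. Summing over $k\in\mathcal K_n$ and using $\sum_k\frac{x_k^2 w_k}{|c_k|^2}<\infty$ together with $1/(n\alpha^2)=o(1)$ from Assumption \ref{ass:regulari} then yields the claimed $o_P(1/n)$ rate for this block.

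A separate contribution comes from the indicator mismatches $I\{\hat\lambda_k\geq\alpha\gamma_k^\nu\}-I\{\lambda_k\geq\alpha\gamma_k^\nu\}$. On the event $\{\mathrm{sign}(\hat\lambda_k-\alpha\gamma_k^\nu)\neq\mathrm{sign}(\lambda_k-\alpha\gamma_k^\nu)\}$ one has $|\hat\lambda_k-\lambda_k|\geq|\lambda_k-\alpha\gamma_k^\nu|$, so Markov's inequality with a very high exponent (using the 128th moment hypothesis) makes the probability of mismatch decay much faster than any fixed polynomial in $n\alpha^2$. After multiplication by the now uniformly bounded regularization factors and summation in $k$, these terms are absorbed into $o_P(1/n)$ as well.

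The main obstacle I anticipate is not any single term but the bookkeeping: naive per-term moment bounds only yield $O_P(1/(n\alpha^2))=o_P(1)$, which is weaker than the required $o_P(1/n)$. The extra factor of $n$ must come from exploiting that, after the $\hat\Pi_{\mathcal K_n}$ subtraction, the linear-in-increment part of $\langle T_{n,1},\phi_k\rangle$ is mean-zero and leaves only quadratic-in-increment residuals. Handling these quadratic residuals in a manner that is summable in $k$ (leveraging the bounds $\lambda_k\geq\alpha\gamma_k^\nu$ on $\mathcal K_n$) is the technically delicate part, and it is precisely where the high moment assumption $(X,W)\in\mathcal F_\eta^{128}$ is indispensable, in the same spirit as the perturbation arguments of \cite{Car06} and \cite{Joh13}.
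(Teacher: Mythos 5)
Your overall strategy (expand $\langle T_{n,1},\phi_k\rangle$ coefficient-wise into products of estimation increments, bound each factor by moment inequalities, and sum over $k\in\mathcal K_n$ using the regularization and summability assumptions) is the same in spirit as the paper's technique, which rests on Lemma \ref{Dikscharf} and is only demonstrated explicitly for $R_{n,2}$ (the full details being deferred to \cite{Dor21}). Your closing heuristic is also essentially correct: after the oracle term is cancelled, each coefficient is a product of two (nearly) independent centered averages, each $O_P(n^{-1/2})$, which is where the extra factor of $n$ comes from. However, your opening display is false, and this is a genuine gap rather than a cosmetic one. The empirical covariance operator $\Gamma_{X,n}$ is \emph{not} diagonalized by the Fourier basis; only the population operator $\Gamma_X$ is, by the second-order stationarity in Assumption \ref{ass:secstat}. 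One has
\begin{equation*}
\frac1n\sum_{j=1}^n|\langle T_{n,1},X_j\rangle|^2=\sum_{k,l\in\mathbb Z}\langle T_{n,1},\phi_k\rangle\,\overline{\langle T_{n,1},\phi_l\rangle}\;\frac1n\sum_{j=1}^n\langle\phi_l, X_j\rangle\langle X_j,\phi_k\rangle,
\end{equation*}
and the off-diagonal entries ($|k|\neq|l|$) are only $O_P(n^{-1/2})$, not zero. These cross terms are precisely why the paper's decomposition of the analogous leading block requires the separate terms $R_{n,4}$ and $R_{n,5}$; dropping them silently loses a whole family of contributions. A repair is possible (e.g.\ replace $\Gamma_{X,n}$ by $\Gamma_X$ at the cost of $O_P(n^{-1/2})\|T_{n,1}\|^2$ and then control the unweighted norm $\|T_{n,1}\|^2$, or bound the off-diagonal sum directly), but neither route is in your write-up and neither is trivial, since losing the weight $x_k$ changes which summability conditions are needed.

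Two further points. First, your entry-wise bound $\mathrm E|\hat c_k-c_k|^2=O(|c_k|/n)$ is not correct: the variance of $\hat c_k$ is of order $x_kw_k/n$, and since $|c_k|^2\leq x_kw_k$ this is in general much larger than $|c_k|/n$; the distinction matters because the ratios $x_kw_k/|c_k|^2$ are exactly the quantities the theorem's summability hypotheses are designed to control. Second, you treat the indicator mismatch $I\{\hat\lambda_k\geq\alpha\gamma_k^\nu\}-I\{\lambda_k\geq\alpha\gamma_k^\nu\}$ by a separate sign-flip/Markov argument, whereas the paper's Lemma \ref{Dikscharf} keeps the indicator inside the $p$-th moment bound of $D_{i,k,n}-D_{i,k}$; your route can work but needs uniformity in $k$ of the mismatch probabilities near the threshold, which you assert rather than prove. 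Finally, the step you yourself flag as "the technically delicate part" --- verifying that the resulting deterministic sums over $k\in\mathcal K_n$ are $o(n)$ under the stated hypotheses --- is the actual content of the proposition and is left entirely open, so as it stands the proposal is a plausible plan rather than a proof.
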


\begin{prop}\label{Tn2} 
Under the assumptions of Theorem \ref{CLTteststatistik} and if $\left( X,W \right) \in \mathcal F_{\eta}^{64}$ and $\mathrm E|U|^{64} \leq\eta< \infty$, we have
\begin{equation*}
\frac{1}{n} \sum_{j=1}^n 
\left| \left\langle T_{n,2}
, X_j
\right\rangle \right|^2 
= o_P \left( \frac{t_n}{n} \right).
\end{equation*}
\end{prop}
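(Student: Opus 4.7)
The plan is to first bound the target expression by an operator quadratic form,
\[
\frac{1}{n}\sum_{j=1}^n \bigl|\langle T_{n,2}, X_j\rangle\bigr|^2 = \langle T_{n,2}, \Gamma_{X,n} T_{n,2}\rangle \leq \|\Gamma_{X,n}\|_{op}\, \|T_{n,2}\|^2.
\]
Since $\|\Gamma_{X,n}\|_{op} = O_P(1)$ under the stated moment assumptions (the empirical covariance operator converges in operator norm to $\Gamma_X$, whose norm is $x_1<\infty$), it is enough to prove $\|T_{n,2}\|^2 = o_P(t_n/n)$, after which the claim follows since $t_n/n \to 0$ is compatible with an $o_P$ statement in this rate.

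Next, using the identity $\tilde\Gamma^\dag\tilde\Gamma\beta = \Gamma_X^\dag\Gamma_X\beta = \hat\Pi_{\mathcal K_n}\beta$ to insert a common deterministic projection, I would rewrite
\[
T_{n,2} = \bigl(\tilde\Gamma_n^\dag\tilde\Gamma_n - \tilde\Gamma^\dag\tilde\Gamma\bigr)\beta - \bigl(\Gamma_{X,n}^\dag\Gamma_{X,n} - \Gamma_X^\dag\Gamma_X\bigr)\beta - \hat\Pi_{\mathcal K_n}A_n.
\]
For each of the two operator differences I would apply a first-order Neumann expansion of the form $\Delta_n^\dag\Delta_n - \Delta^\dag\Delta = \Delta^\dag(\Delta_n - \Delta)(I - \hat\Pi_{\mathcal K_n}) + \text{quadratic remainder}$. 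Expressed in the Fourier basis $(\phi_k)$, the combined principal parts from $\tilde\Gamma$ and $\Gamma_X$ reproduce exactly $\hat\Pi_{\mathcal K_n}A_n$: the factor $D_{i,k}=\langle W_i,\phi_k\rangle/c_k - \langle X_i,\phi_k\rangle/x_k$ arises as the difference of the two inverse-eigenvalue contributions, and $S_{i,m}$ with $|m|\ne|k|$ encodes the off-diagonal action of the empirical operators on $\beta$. This is precisely why $A_n$ is built with that bilinear structure.

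After the cancellation of the principal parts, $T_{n,2}$ reduces to a sum of second-order remainder terms, each bilinear in centered estimation errors such as $\hat c_k-c_k$, $\hat x_k-x_k$ and $\hat w_k-w_k$. Computing second moments coefficient-by-coefficient and invoking the uniform fourth-moment bound supplied by $(X,W)\in\mathcal F_\eta^{64}$ (more than sufficient for the required Rosenthal-type inequalities on empirical sums), together with the summability conditions $\sum_k x_k^2 w_k/|c_k|^2 < \infty$ and $\sum_k |\langle\beta,\phi_k\rangle|\,x_k^{3/2}w_k/|c_k|^2<\infty$ from Theorem \ref{CLTteststatistik}, should yield $E\|T_{n,2}\|^2 = O(1/n)$, which is $o(t_n/n)$ since $t_n\to\infty$.

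The main obstacle will be controlling the discrepancy between the true threshold set $\mathcal K_n=\{k:\lambda_k\ge\alpha\gamma_k^\nu\}$ and its empirical counterpart $\hat{\mathcal K}_n=\{k:\hat\lambda_k\ge\alpha\gamma_k^\nu\}$ that appears inside $\tilde\Gamma_n^\dag$ and $\Gamma_{X,n}^\dag$, because the regularized inverses are most unstable near the threshold. To handle this, I would combine the high-moment condition $\mathcal F_\eta^{64}$ with the regularization rate $1/(n\alpha^2)=o(1)$ from Assumption \ref{ass:regulari} to show that $\mathcal K_n\triangle\hat{\mathcal K}_n$ contributes negligibly on a high-probability event, so that the data-dependent indicators can be replaced by $I\{\lambda_k\ge\alpha\gamma_k^\nu\}$ up to a remainder that is absorbed into the $o_P(t_n/n)$ bound. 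Once this substitution is justified, the moment computation in the previous paragraph goes through and closes the argument.
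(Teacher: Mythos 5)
Your structural analysis of $T_{n,2}$ is essentially right: writing everything in the Fourier basis, the diagonal contributions of $\tilde\Gamma_n^\dag\tilde\Gamma_n\beta$ and $\Gamma_{X,n}^\dag\Gamma_{X,n}\beta$ cancel exactly (no Neumann expansion is needed, since $\hat\lambda_k^{-1}\overline{\hat c_k}/\hat w_k=1/\hat c_k$), and what survives is
$\langle T_{n,2},\phi_k\rangle = I\{\hat\lambda_k\ge\alpha\gamma_k^\nu\}\,\frac1n\sum_{i=1}^n\bigl(D_{i,k,n}-I\{\lambda_k\ge\alpha\gamma_k^\nu\}D_{i,k}\bigr)\sum_{|m|\ne|k|}S_{i,m}$,
which is exactly what Lemma \ref{Dikscharf} is built to control; note that the empirical indicator is already inside the bounds \eqref{DikDikn} and \eqref{Dikn}, so the $\mathcal K_n$ versus $\hat{\mathcal K}_n$ discrepancy you defer to the end does not need a separate symmetric-difference argument. (Minor slip: $\tilde\Gamma^\dag\tilde\Gamma\beta=\Gamma_X^\dag\Gamma_X\beta=\Pi_{\mathcal K_n}\beta$ with the population index set, not $\hat\Pi_{\mathcal K_n}\beta$.)

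The genuine gap is your very first step. Bounding $\frac1n\sum_j|\langle T_{n,2},X_j\rangle|^2\le\|\Gamma_{X,n}\|_{op}\,\|T_{n,2}\|^2$ discards the eigenvalue weights $x_k$, which are precisely why the authors work with projections onto the $X_j$ instead of the $L_2$-norm (cf.\ Proposition \ref{noasyd}). Concretely, Lemma \ref{Dikscharf} with $p=2$ gives per-coefficient bounds of order $\frac1n\bigl(\frac{w_k^2x_k}{|c_k|^4}+\frac1{x_k}\bigr)$, and since $\frac{x_kw_k}{|c_k|^2}\ge1$ both terms are at least $\frac1{x_k}$, which is not summable over the growing set $\mathcal K_n$ except through the threshold bound $x_k\ge\lambda_k\ge\alpha\gamma_k^\nu$, at the price of extra $\alpha^{-1}$ factors; the weighted sum $\sum_kx_k(\cdots)$ instead produces $\bigl(\frac{x_kw_k}{|c_k|^2}\bigr)^2+1$, which is controlled by $t_n^2+|\mathcal K_n|$ and leads comfortably to $o(t_n/n)$. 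Your claimed $E\|T_{n,2}\|^2=O(1/n)$ is therefore not justified as stated: to get anywhere near it you must (i) extract a genuine factor $1/n$ from the average over $i$, i.e.\ show that the summands $(D_{i,k,n}-D_{i,k})\sum_{|m|\ne|k|}S_{i,m}$ behave like a centered, nearly independent array even though every $D_{i,k,n}$ contains the common random quantities $\hat c_k,\hat x_k,\hat w_k$ and the common indicator — a naive Cauchy--Schwarz over $i$ loses exactly the factor $1/n$ that separates $o(t_n/n)$ from the useless bound $o(1)$ — and (ii) balance the resulting $\alpha^{-1},\alpha^{-2}$ factors against $1/(n\alpha^2)=o(1)$. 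These two points are the actual content of the proposition and neither is carried out. The intended route, visible from the worked example for $R_{n,2}$ in the supplement, keeps the quadratic form $\langle T_{n,2},\Gamma_{X,n}T_{n,2}\rangle$, expands it in the basis, separates diagonal and off-diagonal terms in $i$, and applies Lemma \ref{Dikscharf} together with the summability hypotheses of Theorem \ref{CLTteststatistik}; I recommend recasting your argument in that form.
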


\begin{prop}\label{Tn3}
Under the assumptions of Theorem \ref{CLTteststatistik}, and if $\left( X,W \right) \in \mathcal F_{\eta}^{8}$ and $\mathrm E|U|^{8}\leq\eta< \infty$, we have
\begin{equation*}
\frac{1}{n} \sum_{j=1}^n 
\left| \left\langle T_{n,3}
, X_j
\right\rangle \right|^2 
= o_P \left( \frac{t_n}{n} \right).
\end{equation*}
\end{prop}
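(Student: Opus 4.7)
The plan is to view $T_{n,3}$ as purely a truncation-set-mismatch error. Setting $R_n := \tilde\Gamma^\dag \tilde{\mathcal U}_n - \Gamma_X^\dag \mathcal U_{X,n} + A_n$ and interpreting $\hat\Pi_{\mathcal K_n}$ as the Fourier projector using the empirical indicators $I\{\hat\lambda_k \geq \alpha\gamma_k^\nu\}$, one has
\[
T_{n,3} \;=\; -\sum_{k \in \mathbb Z} I\{\hat\lambda_k < \alpha\gamma_k^\nu\}\,\langle R_n, \phi_k\rangle\,\phi_k,
\]
so the Fourier support of $T_{n,3}$ is confined to indices where $\hat\lambda_k$ lies on the "wrong" side of the threshold. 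The first step I would take is to pass from the empirical second moment to the squared norm via
\[
\tfrac{1}{n}\sum_{j=1}^n |\langle T_{n,3},X_j\rangle|^2 \;=\; \langle T_{n,3},\Gamma_{X,n}T_{n,3}\rangle \;\leq\; \|\Gamma_{X,n}\|_{\mathrm{op}}\,\|T_{n,3}\|^2,
\]
and use $\|\Gamma_{X,n}\|_{\mathrm{op}} = O_P(1)$, reducing the claim to $\|T_{n,3}\|^2 = o_P(t_n/n)$.

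Next, I would split $\|T_{n,3}\|^2 = \sum_k I\{\hat\lambda_k < \alpha\gamma_k^\nu\}\,|\langle R_n,\phi_k\rangle|^2$ according to whether $k \in \mathcal K_n$ or not, since the "off–set" part (where $\lambda_k < \alpha\gamma_k^\nu$ but $x_k \geq \alpha\gamma_k^\nu$) involves only the $\Gamma_X^\dag\mathcal U_{X,n}$ component of $R_n$. On each part, the indicator is only active on the event $\{|\hat\lambda_k - \lambda_k| \geq |\lambda_k - \alpha\gamma_k^\nu|\}$, and Markov's inequality gives
\[
P\bigl(I\{\hat\lambda_k < \alpha\gamma_k^\nu\} \neq I\{\lambda_k < \alpha\gamma_k^\nu\}\bigr) \;\leq\; \frac{E|\hat\lambda_k-\lambda_k|^p}{|\lambda_k-\alpha\gamma_k^\nu|^p}.
\]
Under $(X,W)\in \mathcal F_\eta^8$ the estimators $\hat c_k,\hat w_k$ satisfy $E|\hat c_k - c_k|^4 + E|\hat w_k - w_k|^4 = O(n^{-2})$ uniformly in $k$, which propagates to moment bounds on $\hat\lambda_k - \lambda_k$.

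Then I would combine these switching-probability bounds with moment bounds on $\langle R_n,\phi_k\rangle$: since $R_n$ is a centered sample average of mean-zero quantities (built from $U_i$, $\langle X_i,\phi_k\rangle$ and $\langle W_i,\phi_k\rangle$), the 8-moment hypothesis on $U, X, W$ yields $E|\langle R_n,\phi_k\rangle|^4$ of order $n^{-2}$ times a factor involving $x_k, w_k, |c_k|^{-2}$. A Cauchy–Schwarz step gives an index-wise bound
\[
E\bigl[I\{\hat\lambda_k < \alpha\gamma_k^\nu\}\,|\langle R_n,\phi_k\rangle|^2\bigr] \;\leq\; \bigl(P(\hat\lambda_k\text{-switch})\bigr)^{1/2}\bigl(E|\langle R_n,\phi_k\rangle|^4\bigr)^{1/2},
\]
which carries an extra small factor coming from the switching probability.

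Summing over $k$ is where the main difficulty lies: indices far from the threshold contribute negligibly because the switching probability decays polynomially in $n$, but indices where $\lambda_k$ is close to $\alpha\gamma_k^\nu$ are the critical ones. To control these I would invoke the summability hypotheses of Theorem~\ref{CLTteststatistik}, in particular $\sum_k x_k^2 w_k/|c_k|^2 < \infty$, together with Assumption \ref{ass:regulari} giving $(n\alpha^2)^{-1} = o(1)$. The hard part is to verify that the combination of switching probabilities, $R_n$-moment bounds, and these summability conditions produces a rate strictly smaller than $t_n/n$; concretely, one must check that the "small factor" gained from each switching probability is enough to beat the $t_n$ growth, which relies on the fact that $t_n \to \infty$ only because many indices enter $\mathcal K_n$ while the mismatch set has vanishing relative density.
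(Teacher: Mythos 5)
The paper does not actually print a proof of Proposition \ref{Tn3}; it only exemplifies the common technique on $R_{n,2}$ in the supplement and refers to \cite{Dor21} for the rest, so your proposal has to be judged on its own terms. Your reading of $T_{n,3}$ as an indicator--mismatch term $-\sum_k I\{\hat\lambda_k<\alpha\gamma_k^\nu\}\langle R_n,\phi_k\rangle\phi_k$ supported on $\mathcal K_n$ (because $R_n$ itself carries the factor $I\{\lambda_k\geq\alpha\gamma_k^\nu\}$), and the plan of pairing switching probabilities with fourth-moment bounds on $\langle R_n,\phi_k\rangle$ via Cauchy--Schwarz, is the right general shape and consistent with the paper's toolkit (cf.\ Lemma \ref{Dikscharf}, where an indicator is paired with an estimator difference to extract an $n^{-p/2}$ factor). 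But the argument has a genuine hole exactly where you flag "the hard part": the Markov bound $P(I\{\hat\lambda_k<\alpha\gamma_k^\nu\}\neq I\{\lambda_k<\alpha\gamma_k^\nu\})\leq E|\hat\lambda_k-\lambda_k|^p/|\lambda_k-\alpha\gamma_k^\nu|^p$ is vacuous for indices with $\lambda_k$ at or near the threshold $\alpha\gamma_k^\nu$, and nothing in the hypotheses of Theorem \ref{CLTteststatistik} prevents a non-negligible fraction of $\mathcal K_n$ from sitting in that boundary shell (indeed, since $\lambda_k\to 0$, the indices that make $t_n\to\infty$ accumulate near the cutoff). For those $k$ your index-wise bound degenerates to $E|\langle R_n,\phi_k\rangle|^2\asymp n^{-1}\bigl(w_k/|c_k|^2-1/x_k\bigr)$ with no extra small factor, and the resulting sum need not be $o(t_n/n)$ --- it can be as large as $n^{-1}\sum_{k\in\mathcal K_n}\bigl(x_kw_k/|c_k|^2-1\bigr)/x_k$, which Cauchy--Schwarz only bounds by a constant times $t_n/n$ rather than $o(t_n/n)$. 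You need either a separate argument that the boundary shell contributes $o(t_n)$ to the relevant sums, or a device that extracts a uniform $o(1)$ factor from the joint event $\{\hat\lambda_k<\alpha\gamma_k^\nu\leq\lambda_k\}$ without dividing by the distance to the threshold; as it stands the decisive step is asserted, not proved.

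A secondary concern: the reduction $\frac1n\sum_j|\langle T_{n,3},X_j\rangle|^2\leq\|\Gamma_{X,n}\|_{\mathrm{op}}\|T_{n,3}\|^2$ throws away the per-index weights $x_k$ that the quadratic form $\langle T_{n,3},\Gamma_X T_{n,3}\rangle$ would supply. On $\mathcal K_n$ this costs a factor of order $x_k^{-1}\leq(\alpha\gamma_k^\nu)^{-1}$ per index, and the summability hypotheses of Theorem \ref{CLTteststatistik} are stated with those $x_k$ weights present (e.g.\ $\sum_k x_k^2w_k/|c_k|^2<\infty$), so the weighted and unweighted sums are not interchangeable. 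The paper's own handling of the analogous quadratic forms (the terms $R_{n,1},R_{n,4},R_{n,5}$) keeps $\Gamma_{X,n}$ explicit and controls its diagonal fluctuation $\hat x_k-x_k$ and its off-diagonal entries separately rather than passing to the operator norm; you would be safer doing the same here.
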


\begin{prop}\label{Rn1245}
Under the assumptions of Theorem \ref{CLTteststatistik}, and if
$\E|U|^{4} \leq\eta < \infty$ and $(X,W) \in \mathcal F_{\eta}^4$, we have
\begin{align*}
R_{n,1} &= o_P \left( \frac{1}{n} \right),	\quad	R_{n,2} = \mathfrak R_n +  o_P \left( \frac{t_n}{n} \right),	\quad \mathfrak R_n = o \left( \frac{1}{\sqrt{n}} \right),\\
R_{n,4} &= o_P \left( \frac{1}{n^{3/2}} \right),	\quad	R_{n,5} = o_P \left( \frac{1}{n} \right).
\end{align*}
\end{prop}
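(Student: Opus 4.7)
The plan is to handle all five estimates via a common moment-method template: for each $R_{n,i}$, compute its mean and second moment using the i.i.d.\ structure of $(X_i,W_i,U_i)$ combined with the orthogonality of the Fourier basis $(\phi_k)$ with respect to the three (cross-)covariance operators guaranteed by Assumption~\ref{ass:secstat}, and then apply Chebyshev's inequality. Two identities will be used repeatedly. First, under $H_0$ we have $\e[U_i\langle X_i,\phi_k\rangle]=\e[U_i\langle W_i,\phi_k\rangle]=0$, so that $\e[D_{1,k}U_1]=0$ and $\e[D_{1,k}\overline{S_{1,m}}]=0$ whenever $|m|\neq|k|$. Second, direct calculation gives $x_k\,\e|D_{1,k}|^2 = x_kw_k/|c_k|^2-1$ and $\e|\sigma U_1+\sum_m S_{1,m}|^2 = \sigma^2+\sum_m|\langle\beta,\phi_m\rangle|^2 x_m$ (the cross-term vanishing under $H_0$); these identities are the source of the spectral weights appearing in $\mathfrak R_n$ and $t_n^2$.

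For $R_{n,2}$, the diagonal ($i=j$) bilinear term is the only $R_{n,i}$ with a non-vanishing mean, and the identities above immediately yield $\e[R_{n,2}]=\mathfrak R_n$. I would then expand $\Var(R_{n,2})$ as a double sum over spectral indices $(k,\ell)$ and isolate the diagonal contribution $k=\ell$; the required fourth moments of $D_{1,k}$ are controlled by the $\mathcal F_{\eta}^4$ hypothesis, and the resulting bound $\Var(R_{n,2})=O(t_n^2/n^2)$ gives $R_{n,2}=\mathfrak R_n+o_P(t_n/n)$ by Chebyshev. The claim $\mathfrak R_n=o(1/\sqrt n)$ is purely deterministic: for $k\in\mathcal K_n$ one has $w_k/|c_k|^2\leq \gamma_k^{-\nu}/\alpha$, hence the spectral sum in $\mathfrak R_n$ is bounded by $(1/\alpha)\sum_k x_k\gamma_k^{-\nu}=O(1/\alpha)$ (noting that $\sum_k x_k<\infty$ is implied by $\sum_k x_k^2w_k/|c_k|^2<\infty$ via $x_kw_k/|c_k|^2\geq 1$), so $\mathfrak R_n=O(1/(n\alpha))=o(1/\sqrt n)$ by Assumption~\ref{ass:regulari}.

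For $R_{n,1}$, the extra factor $\hat x_k-x_k$ has mean zero and variance $O(x_k^2/n)$ under $\mathcal F_\eta^4$. Cauchy--Schwarz on $\e[|\hat x_k-x_k|\cdot|A_k|^2]$, where $A_k:=\sum_i D_{i,k}(\sigma U_i+\sum_m S_{i,m})$, combined with a fourth-moment bound yielding $\e A_k^4=O(n^2(w_k/|c_k|^2)^2)$ under $H_0$, produces $\e|R_{n,1}|=O\big(n^{-3/2}\sum_{k\in\mathcal K_n}x_kw_k/|c_k|^2\big)=O(1/(n^{3/2}\alpha))=o(1/n)$. The off-diagonal terms $R_{n,4}$ and $R_{n,5}$ have mean zero, since the $j$-average $\frac1n\sum_j\langle\phi_k,X_j\rangle\langle X_j,\phi_\ell\rangle$ has expectation $x_k\delta_{k\ell}$ while the spectral sum is restricted to $|k|\neq|\ell|$. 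A second-moment computation, separating $j_1=j_2$ from $j_1\neq j_2$ and likewise for the $i$-indices (three independent index sets in the case of $R_{n,5}$), then gives the stated rates $o_P(n^{-3/2})$ and $o_P(1/n)$ respectively.

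The main obstacle will be the variance bound for $R_{n,2}$: extracting the correct $(t_n/n)^2$ scaling requires pinning down which spectral index pairs contribute at leading order and avoiding crude bounds on the mixed fourth moments of $D_{1,k}$, which must be carefully reduced to the spectral weights $(x_kw_k/|c_k|^2-1)^2$ defining $t_n^2$. The moment hypotheses $(X,W)\in\mathcal F_\eta^4$ and $\e|U|^4\leq\eta$ are matched precisely to these fourth-order expansions, and Assumption~\ref{ass:regulari} keeps $|\mathcal K_n|$ from growing too fast relative to $n$, which is what allows the various summations over $\mathcal K_n$ to be converted into the explicit rates stated.
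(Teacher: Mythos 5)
Your overall strategy -- compute means, bound second moments, and apply Chebyshev, using the orthogonality relations $\e[D_{1,k}U_1]=0$ and $\e[D_{1,k}\overline{S_{1,m}}]=0$ for $|m|\neq|k|$ -- is exactly the route the paper takes (it only writes out the $R_{n,2}$ case and defers the rest to the dissertation). However, there are three concrete inaccuracies in your sketch. First, $\e[R_{n,2}]$ is \emph{not} equal to $\mathfrak R_n$: because the inner sum in $R_{n,2}$ excludes the frequencies $|m|=|k|$, the factorization $\e\big[|D_{1,k}|^2|\sigma U_1+\sum_{|m|\neq|k|}S_{1,m}|^2\big]=\big(\sigma^2+\sum_{|m|\neq|k|}|\langle\beta,\phi_m\rangle|^2x_m\big)\big(\tfrac{w_k}{|c_k|^2}-\tfrac1{x_k}\big)$ produces the additional deterministic term $-\tfrac1n\sum_k\big(\tfrac{x_kw_k}{|c_k|^2}-1\big)|\langle\beta,\phi_k\rangle|^2x_kI\{\lambda_k\geq\alpha\gamma_k^\nu\}$. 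This correction is $O(1/n)=o(t_n/n)$ under the summability assumption $\sum_k x_k^2w_k/|c_k|^2<\infty$, so the conclusion survives, but it must be identified and absorbed -- the paper does this explicitly via the identity behind its bound on $\e\big[|\mathscr U_{i,k}D_{i,k}|^2-\mathfrak V^{1/2}\e|D_{i,k}|^2\big]$. Second, your target bound $\mathrm{Var}(R_{n,2})=O(t_n^2/n^2)$ does not suffice: Chebyshev then only yields $R_{n,2}-\e R_{n,2}=O_P(t_n/n)$, not $o_P(t_n/n)$. What the paper actually proves is $\tfrac{n^2}{t_n^2}\e|R_{n,2}-\mathfrak R_n|^2=o(1)$, where the little-$o$ comes from the diagonal terms carrying a factor $\tfrac1{n\alpha^2}=o(1)$ and the bias and cross terms carrying a factor $\tfrac1{t_n^2}=o(1)$; you need to extract these explicitly rather than stopping at an $O$-bound.

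Third, your claim that $R_{n,4}$ and $R_{n,5}$ have mean zero because $\e[\langle\phi_k,X_j\rangle\langle X_j,\phi_l\rangle]=x_k\delta_{|k||l|}$ only covers the terms where the outer index $j$ differs from the inner indices $i$ (resp.\ $i_1,i_2$). In $R_{n,4}$ and $R_{n,5}$ these index sets are \emph{not} disjoint, and for $j=i$ the factor $\langle\phi_k,X_j\rangle\langle X_j,\phi_l\rangle$ is dependent on $D_{j,k}\mathscr U_{j,k}\overline{D_{j,l}\mathscr U_{j,l}}$, so the expectation does not factor and need not vanish. The coincidence terms carry an extra factor $1/n$ from the normalization $n^{-3}$, but showing that their total contribution is $o(n^{-3/2})$ (for $R_{n,4}$) still requires a summability argument over the off-diagonal pairs $(k,l)$ that your sketch does not supply. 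Your arguments for $R_{n,1}$ and for the deterministic rate $\mathfrak R_n=O(1/(n\alpha))=o(1/\sqrt n)$ are sound and match the paper's use of $x_kw_k/|c_k|^2\leq x_k/(\alpha\gamma_k^\nu)$ on $\mathcal K_n$ together with $1/(\sqrt n\,\alpha)=o(1)$.
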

\section{Auxiliary results}
The results in this section are used at several places in the proofs. They follow from Lemma A.1 in \cite{Joh16}.
\begin{lemma}
\label{Reihenkonvergenz} Let $X$ and $W$ have finite second moments and $m \in \mathbb N$.
Then we have $\sum_{k \in \mathbb Z} x_k^{2m}<\infty$ and $\sum_{k \in \mathbb Z}
x_k^{2m}w_k<\infty$.
If additionally $X \in \mathcal G_{\eta}^{2m}$ und $\beta \in L_2([0,1])$, we have
\begin{align*} 
\mathrm E\left|\sum_{k \in \mathbb Z} \langle \beta,\phi_k \rangle \langle \phi_k, X
\rangle\right|^{2m} < \infty.
\end{align*}
\end{lemma}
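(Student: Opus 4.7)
The plan is to handle the three claims in turn, each of which reduces to a standard trace-class or Minkowski argument; the key observation is that finite second moment of the Hilbert-space valued random elements immediately forces the eigenvalue sequences to be summable (and hence bounded).

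First, I would observe that $X$ having finite second moment is equivalent to $\Gamma_X$ being trace class, i.e.\ $\sum_{k \in \mathbb Z} x_k = \mathrm E \|X\|^2 = \int_0^1 \mathrm E|X(t)|^2\,dt < \infty$. In particular, the sequence $(x_k)_{k \in \mathbb Z}$ is bounded by some constant $M_X := \sup_k x_k < \infty$. For any $m \geq 1$ I can then write $x_k^{2m} = x_k \cdot x_k^{2m-1} \leq M_X^{2m-1}\, x_k$, and summing gives $\sum_{k \in \mathbb Z} x_k^{2m} \leq M_X^{2m-1} \sum_{k \in \mathbb Z} x_k < \infty$, which is the first claim. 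The analogous argument for $W$ yields $M_W := \sup_k w_k < \infty$, so that $\sum_{k \in \mathbb Z} x_k^{2m} w_k \leq M_W \sum_{k \in \mathbb Z} x_k^{2m} < \infty$, establishing the second claim.

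For the third claim, I would apply Minkowski's inequality in $L^{2m}(\mathrm P)$ to the (at most countable) sum, giving
\begin{equation*}
\Big(\mathrm E\Big|\sum_{k \in \mathbb Z} \langle \beta,\phi_k \rangle \langle \phi_k, X \rangle\Big|^{2m}\Big)^{1/(2m)} \leq \sum_{k \in \mathbb Z} |\langle \beta,\phi_k\rangle|\, \big(\mathrm E|\langle \phi_k, X\rangle|^{2m}\big)^{1/(2m)}.
\end{equation*}
The assumption $X \in \mathcal G_{\eta}^{2m}$ gives $\mathrm E|\langle \phi_k, X\rangle|^{2m} \leq \eta\, x_k^{m}$, so the right-hand side is bounded by $\eta^{1/(2m)} \sum_{k \in \mathbb Z} |\langle \beta,\phi_k\rangle|\sqrt{x_k}$. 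I would then apply Cauchy--Schwarz to split this into $\eta^{1/(2m)} (\sum_k |\langle \beta,\phi_k\rangle|^2)^{1/2} (\sum_k x_k)^{1/2} = \eta^{1/(2m)} \|\beta\|_{L^2} (\mathrm E\|X\|^2)^{1/2}$, which is finite under the stated assumptions $\beta \in L^2([0,1])$ and $\mathrm E\|X\|^2 < \infty$.

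The only real subtlety is verifying that Minkowski applies to the infinite series; this is justified because the partial sums $\sum_{|k|\le N} \langle \beta,\phi_k\rangle \langle \phi_k, X\rangle$ converge to $\langle \beta, X\rangle$ in $L^2([0,1])$-inner product sense almost surely (Parseval), and the bound derived above for partial sums is uniform in $N$, so Fatou's lemma upgrades the partial-sum inequality to the claimed estimate on the full series. Everything else is bookkeeping that follows directly from Lemma~A.1 in \cite{Joh16}.
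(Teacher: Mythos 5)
Your argument is correct. Note that the paper itself supplies no proof of this lemma: it only remarks that the results "follow from Lemma A.1 in Johannes (2016)" and defers details to the dissertation of Dorn (2021), so there is no in-paper argument to compare against line by line. Your proof fills that gap with the natural self-contained route: the first two claims follow from the trace-class property $\sum_k x_k = \mathrm E\|X\|^2<\infty$ (Parseval plus Tonelli) together with boundedness of the eigenvalue sequences, and the third from Minkowski's inequality in $L^{2m}(\mathrm P)$, the normalization $\mathrm E|\langle \phi_k,X\rangle|^{2m}\le \eta\, x_k^m$ built into $\mathcal G_\eta^{2m}$, and Cauchy--Schwarz to reduce to $\|\beta\|_{L^2}(\mathrm E\|X\|^2)^{1/2}$. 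Your handling of the passage from partial sums to the full series (uniform bound plus a.s.\ convergence of the scalar series and Fatou) is the one genuinely delicate point and you treat it correctly; one could even note that the series converges absolutely a.s.\ by Cauchy--Schwarz applied to the coefficient sequences, which makes the a.s.\ convergence immediate. In short, the proposal is a complete and correct proof, arguably more useful to the reader than the paper's bare citation.
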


\begin{lemma} 
\label{Dikscharf}
Let $p \in \mathbb N$ be fixed and suppose $\left( X,W \right)\in \mathcal F_{\eta}^{8p}$ and $\mathrm E|U|^{8p} \leq\eta<\infty$. Then, there is a positive Konstant $C=C_p$ such that for $k \in \mathbb Z$, we have
\begin{align}
\mathrm E | 
I\{ \hat \lambda_k \geq \alpha \gamma_k^\nu \}
\left(D_{i,k,n} - D_{i,k}\right)|^p 
\leq 
\frac{C}{n^{p/2}} 
\left(\frac{w_k^p x_k^{p/2}}{|c_k|^{2p}}+\frac{1}{x_k^{p/2}}\right)
\left( 1 + o(1) \right)  \label{DikDikn}
\end{align}
and 
\begin{align}
\label{Dikn}
\mathrm E \left|I\{ \hat \lambda_k \geq \alpha \gamma_k^\nu \} D_{i,k,n}  \right|^p 
\leq
C_p \Bigg\{ 
\frac{w_k^{p/2}}{|c_k|^p} 
+
\frac{1}{x_k^{p/2}}
+
\frac{C}{n^{p/2}} 
\left(\frac{w_k^p x_k^{p/2}}{|c_k|^{2p}}+\frac{1}{x_k^{p/2}}\right)
\left( 1 + o(1) \right)\Bigg\}.
\end{align}
\end{lemma}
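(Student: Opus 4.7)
The plan is to prove (\ref{DikDikn}) first by a careful decomposition and moment analysis, and then to deduce (\ref{Dikn}) by writing $D_{i,k,n} = D_{i,k} + (D_{i,k,n} - D_{i,k})$ and bounding $E|D_{i,k}|^p$ directly. Throughout, I will suppress the indicator $I\{\hat\lambda_k \geq \alpha \gamma_k^\nu\}$ from the notation except where it is needed.

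For (\ref{DikDikn}), I would decompose
\begin{equation*}
D_{i,k,n} - D_{i,k} = \langle W_i,\phi_k\rangle\, A_k - \langle X_i,\phi_k\rangle\, B_k,
\end{equation*}
where $A_k = \frac{I\{\hat w_k \geq \alpha\}}{\hat c_k} - \frac{1}{c_k}$ and $B_k = \frac{1}{\hat x_k} - \frac{1}{x_k}$. Applying Minkowski's inequality in $L^p$ and Cauchy-Schwarz to each piece,
\begin{equation*}
\bigl(E|\langle W_i,\phi_k\rangle\, I A_k|^p\bigr)^{1/p} \leq \bigl(E|\langle W_i,\phi_k\rangle|^{2p}\bigr)^{1/(2p)}\bigl(E|I A_k|^{2p}\bigr)^{1/(2p)},
\end{equation*}
and symmetrically for the $X$-term. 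The moment assumption $(X,W)\in\mathcal F_\eta^{8p}$ immediately yields $(E|\langle W_i,\phi_k\rangle|^{2p})^{1/(2p)} \lesssim w_k^{1/2}$ and $(E|\langle X_i,\phi_k\rangle|^{2p})^{1/(2p)} \lesssim x_k^{1/2}$.

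The heart of the argument is in bounding $E|I A_k|^{2p}$ and $E|I B_k|^{2p}$. Using the identity $\frac{1}{\hat c_k}-\frac{1}{c_k} = \frac{c_k - \hat c_k}{c_k \hat c_k}$, I would split the expectation according to whether $|\hat c_k - c_k|\leq |c_k|/2$ or not. On the good event one has $|\hat c_k|\geq |c_k|/2$, hence $|A_k|\leq 2|\hat c_k - c_k|/|c_k|^2$; on the bad event one uses Markov's inequality and the fact that the event has small probability relative to $|c_k|$. By Marcinkiewicz–Zygmund applied to the i.i.d.\ sums defining $\hat c_k$ and $\hat x_k$, together with $(X,W)\in\mathcal F_\eta^{8p}$, one has $E|\hat c_k - c_k|^{2p} \lesssim w_k^p x_k^p/n^p$ and $E|\hat x_k - x_k|^{2p} \lesssim x_k^{2p}/n^p$; this is essentially what Lemma A.1 of \cite{Joh16} provides. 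Combining these, $E|I A_k|^{2p} \lesssim w_k^p x_k^p/(n^p|c_k|^{4p})$ and $E|I B_k|^{2p} \lesssim 1/(n^p x_k^{2p})$. Multiplying back by the moment factors $w_k^{1/2}$ and $x_k^{1/2}$, raising to the $p$-th power, and using $(a+b)^p \lesssim a^p + b^p$ gives (\ref{DikDikn}).

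For (\ref{Dikn}), a direct application of Minkowski to $D_{i,k}=\langle W_i,\phi_k\rangle/c_k - \langle X_i,\phi_k\rangle/x_k$ combined with the moment bounds $(E|\langle W_i,\phi_k\rangle|^p)^{1/p}\lesssim w_k^{1/2}$ and $(E|\langle X_i,\phi_k\rangle|^p)^{1/p}\lesssim x_k^{1/2}$ yields $E|D_{i,k}|^p \lesssim w_k^{p/2}/|c_k|^p + 1/x_k^{p/2}$. Adding this to (\ref{DikDikn}) via Minkowski delivers (\ref{Dikn}). The main obstacle will be the careful handling of the random denominators $\hat c_k$ and $\hat x_k$: the indicator $I\{\hat\lambda_k \geq \alpha\gamma_k^\nu\}$ alone only guarantees $|\hat c_k|\geq \alpha\gamma_k^{\nu/2}$, which would introduce $\alpha^{-2p}$ factors that do not match the stated bound. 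Hence one must exploit concentration of $\hat c_k$ around $c_k$ itself (rather than around the threshold $\alpha$), and this is precisely what the $(1+o(1))$ factor in the statement absorbs, accounting for the low-probability event where the concentration fails.
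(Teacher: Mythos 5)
The paper itself does not spell out a proof of this lemma (it defers to the dissertation \cite{Dor21}), so there is no line-by-line comparison to make. Your route --- split $D_{i,k,n}-D_{i,k}$ into a $W$-part and an $X$-part, separate the score factors from the random reciprocals by Minkowski and Cauchy--Schwarz, control $\frac{1}{\hat c_k}-\frac{1}{c_k}$ and $\frac{1}{\hat x_k}-\frac{1}{x_k}$ by a good/bad event split around $|\hat c_k-c_k|\le |c_k|/2$ resp.\ $|\hat x_k-x_k|\le x_k/2$, and feed in Marcinkiewicz--Zygmund/Rosenthal bounds of order $2p$ and $4p$ --- is the natural and standard one, and the bookkeeping is right: it produces exactly the two terms $w_k^p x_k^{p/2}/(n^{p/2}|c_k|^{2p})$ and $1/(n^{p/2}x_k^{p/2})$ in \eqref{DikDikn}, the moment order $8p$ is precisely what is needed for the $4p$-th moment Markov bound on the bad events, and \eqref{Dikn} then follows from \eqref{DikDikn} plus the elementary bound $\mathrm E|D_{i,k}|^p\le C\big(w_k^{p/2}/|c_k|^p+x_k^{-p/2}\big)$ as you say.

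There is one concrete gap, in the bad-event term for $B_k=\frac{1}{\hat x_k}-\frac{1}{x_k}$. On $\{|\hat x_k-x_k|>x_k/2\}$ you have no pointwise control of $1/\hat x_k$: it is an average of squares and can be arbitrarily close to $0$, and negative moments of $\hat x_k$ are not assumed, so ``Markov times a crude bound'' does not close there. You correctly flag the analogous issue for $\hat c_k$ and note that the indicator forces $|\hat c_k|\ge\alpha\gamma_k^{\nu/2}$, but you assert that this is \emph{all} the indicator gives, and that is where the argument as written fails. In fact, by Cauchy--Schwarz $|\hat c_k|^2\le\hat x_k\hat w_k$, hence $\hat\lambda_k=|\hat c_k|^2/\hat w_k\le\hat x_k$, so on $\{\hat\lambda_k\ge\alpha\gamma_k^\nu\}$ you also get the deterministic lower bounds $\hat x_k\ge\alpha\gamma_k^\nu$ and $\hat w_k\ge\alpha$. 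With this, $|B_k|\le 1/(\alpha\gamma_k^\nu)+1/x_k$ on the relevant event, the bad-event contribution is at most this constant to the power $2p$ times $P(|\hat x_k-x_k|>x_k/2)=O(n^{-2p})$, and Assumption \ref{ass:regulari} ($n\alpha^2\to\infty$) makes it of smaller order than the main term --- which is exactly what the $(1+o(1))$ factor in the statement is there to absorb. Once this observation is inserted (and used symmetrically for the $\hat c_k$ bad event), your proof goes through.
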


\section{Proof of Theorem \ref{CLTRn3}}
The proof follows by using a central limit theorem for martingal difference sequences with respect to $\left( \mathcal F_{n,j} \right)_{n \in \mathbb N, 0
\leq j \leq n}$, where $
\mathcal F_{n,j}
= \sigma \left( X_1,W_1,Y_1,\ldots,X_j,W_j,Y_j \right)$ and $\mathcal F_{n,0} = \sigma
\left( \emptyset, \Omega \right)$, see \cite{HH80}, Theorem 3.2 and Corollary 3.1, for
\begin{align}
\frac{n}{2t_n} R_{n,3} 
&= 
\sum_{j=2}^n \frac{1}{t_{n} n} \sum_{k \in \mathbb Z}
\mathscr U_{j,k} D_{j,k}
\sum_{i=1}^{j-1}
\ov{\mathscr U_{i,k} D_{i,k}}
 x_k I\{ \lambda_k \geq \alpha \gamma_k^\nu \} \nn =\sum_{j=2}^n Y_{n,j},
\label{Rn12} 
\end{align}
where
\[ Y_{n,j} =  
\frac{1}{t_{n} n} \sum_{k \in \mathbb Z}
\mathscr U_{j,k} 
D_{j,k}
Z_{n,j,k},\] and
\[ Z_{n,j,k} 
=
\sum_{i=1}^{j-1}
\ov{\mathscr U_{i,k} D_{i,k}}
 x_k I\{ \lambda_k \geq \alpha \gamma_k^\nu \} 
.\]

In a first step, we consider the conditional variance of the maringale difference scheme.
\begin{prop}\label{condvar} Under the assumptions of Theorem \ref{CLTteststatistik}, under the null hypothesis and for $(X,W) \in \mathcal F_{\eta}^4$, we have
\[
\mathfrak V_n := \sum_{j=2}^n  
\mathrm E \left[ 
Y_{n,j}^2
\mid \mathcal F_{n,j-1} \right] \Pkonv \mathfrak V \quad \text{as} \quad n \to \infty.
\]
\label{varkonv}
\end{prop}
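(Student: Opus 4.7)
The plan is to verify the standard convergence-of-conditional-variance hypothesis of the martingale central limit theorem by a direct moment computation on $\mathfrak V_n$.

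First, since $(X_j,W_j,U_j)$ is independent of $\mathcal F_{n,j-1}$, I would pull the $\mathcal F_{n,j-1}$-measurable factors $Z_{n,j,k}$ out of the conditional expectation, giving
\[ \mathrm E[Y_{n,j}^2\mid\mathcal F_{n,j-1}] = \frac{1}{t_n^2 n^2}\sum_{k,l\in\mathbb Z} Z_{n,j,k}\,\overline{Z_{n,j,l}}\,\Lambda_{k,l}, \qquad \Lambda_{k,l}:=\mathrm E\bigl[D_{1,k}\mathscr U_{1,k}\,\overline{D_{1,l}\mathscr U_{1,l}}\bigr]. \]
Under $H_0$, $U_1$ is independent of $(X_1,W_1)$, so every term in $\Lambda_{k,l}$ containing an odd power of $U_1$ vanishes, and $\Lambda_{k,l}$ splits into a $U$-piece proportional to $\mathrm E[D_{1,k}\overline{D_{1,l}}]$ plus a pure $X$-piece involving the sums $\sum_{|m|\neq|k|}S_{1,m}$.

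Second, I would invoke the joint weak stationarity of $(X,W)$ from Assumption \ref{ass:secstat}, which implies that the Fourier coefficients at distinct frequencies $k\neq l$ are uncorrelated. This makes the $U$-piece of $\Lambda_{k,l}$ exactly diagonal, and a short computation gives $x_k\mathrm E[|D_{1,k}|^2]=x_kw_k/|c_k|^2-1$, which is precisely the summand of $t_n^2$ in \eqref{tkn}. For the $X$-piece, the diagonal $k=l$ contribution can be identified as $\bigl(\sum_m|\langle\beta,\phi_m\rangle|^2 x_m\bigr)\mathrm E[|D_{1,k}|^2]$ up to negligible corrections stemming from the constraint $|m|\neq|k|$, while the off-diagonal $(k\neq l)$ contribution is bounded via the 4th-moment condition $(X,W)\in\mathcal F_\eta^4$, Lemma \ref{Reihenkonvergenz}, and the summability $\sum_k x_k^2w_k/|c_k|^2<\infty$ imposed in Theorem \ref{CLTteststatistik}. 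Combining these gives $\Lambda_{k,k}=\bigl(\sigma^2+\sum_m|\langle\beta,\phi_m\rangle|^2 x_m\bigr)\bigl(w_k/|c_k|^2-1/x_k\bigr)+o(\cdot)$ on $\mathcal K_n$.

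Third, since $\{D_{i,k}\mathscr U_{i,k}\}_{i=1}^{j-1}$ are i.i.d.\ and mean-zero, the diagonal contribution to $\mathrm E\mathfrak V_n$ reduces, using $\sum_{j=2}^n(j-1)=n(n-1)/2$, to
\[ \frac{n(n-1)}{2 t_n^2 n^2}\sum_{k\in\mathcal K_n} x_k^2\,\Lambda_{k,k}^2\;\longrightarrow\;\mathfrak V, \]
the identification of the limit following directly from the explicit expressions for $\Lambda_{k,k}$ and $t_n^2$. The off-diagonal ($k\neq l$) part of $\mathrm E\mathfrak V_n$ is seen to be $o(1)$ by the same orthogonality of Fourier coefficients combined with the summability assumptions.

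Finally, I would upgrade convergence of $\mathrm E\mathfrak V_n$ to convergence in probability of $\mathfrak V_n$ by establishing $\mathrm{Var}(\mathfrak V_n)=o(1)$ through a four-fold moment expansion. The high moment conditions $(X,W)\in\mathcal F_\eta^{128}$ and $\mathrm E|U|^{128}\leq\eta$ together with Lemma \ref{Dikscharf} supply the required bounds, and the rate assumption $t_n^{-4}\sum_{k\in\mathcal K_n}(x_kw_k/|c_k|^2-1)^4=o(1)$ is exactly what is needed to absorb the $t_n^4$ denominator produced in this computation. The main obstacle is the careful bookkeeping of the fourth-order off-diagonal cross terms of the form $\mathrm E[S_{1,m}\overline{S_{1,m'}}D_{1,k}\overline{D_{1,l}}]$: unlike the quadratic-in-$U$ piece, these are not exactly diagonal in $(k,l)$ under second-order stationarity alone, and must be bounded by absolute-value estimates that exploit the summability conditions $\sum_k x_k^{3/2}w_k/|c_k|^2<\infty$ and $\sum_k x_k^2w_k/|c_k|^2<\infty$ assumed in Theorem \ref{CLTteststatistik}.
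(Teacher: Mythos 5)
Your overall strategy coincides with the paper's: condition out the $\mathcal F_{n,j-1}$-measurable factors $Z_{n,j,k}$, reduce the conditional variance to a weighted law of large numbers for $|\mathscr U_{i,k}D_{i,k}|^2$, identify the mean via $\mathrm E|\mathscr U_{1,k}D_{1,k}|^2=\mathrm E|\mathscr U_{1,k}|^2\,\mathrm E|D_{1,k}|^2$ and $x_k\mathrm E|D_{1,k}|^2=x_kw_k/|c_k|^2-1$ so that the normalization $t_n^2$ appears, and then upgrade to convergence in probability by an $L^2$ (fourth-moment) computation in which $t_n^{-4}\sum_{k\in\mathcal K_n}(x_kw_k/|c_k|^2-1)^4=o(1)$ absorbs the denominator. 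Two points of genuine divergence are worth recording. First, you are more careful than the paper about the off-diagonal frequencies: the paper passes from $\mathrm E[|\sum_k\mathscr U_{j,k}D_{j,k}Z_{n,j,k}|^2\mid\mathcal F_{n,j-1}]$ directly to a single sum over $k$, i.e.\ it silently sets $\Lambda_{k,l}=0$ for $|k|\neq|l|$, whereas you correctly observe that second-order stationarity only kills the $U$-part of $\Lambda_{k,l}$ exactly, while the quartic $S$-part must be bounded using the summability hypotheses; this is a real improvement in rigor. Second, your identification of the limit does not close arithmetically as written: with $\sum_{j=2}^n(j-1)=n(n-1)/2$ and $x_k^2\Lambda_{k,k}^2\approx\mathfrak V\,(x_kw_k/|c_k|^2-1)^2$ your displayed diagonal term converges to $\mathfrak V/2$, not $\mathfrak V$. (The paper avoids this by effectively replacing $\sum_{j=2}^n\sum_{i<j}$ with $n\sum_{i\le n-1}$ in its definition of $\mathfrak H_n$, which is itself a loose step, so you should reconcile the combinatorial weight $n-\max(i,p)$ explicitly rather than inherit either shortcut.) Two minor points: the proposition only needs $(X,W)\in\mathcal F_\eta^4$ here --- the $128$th moments are consumed elsewhere, and the relevant elementary bound is $\mathrm E|D_{1,k}|^4\le C\eta/\alpha^2$ rather than Lemma \ref{Dikscharf}, which concerns the estimated quantities $D_{i,k,n}$ that do not enter $Y_{n,j}$ at all.
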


\begin{proof}
Using that $\mathscr
U_{j,k} D_{j,k}\ov{\mathscr U_{j,l} D_{j,l} }$ is independent of $(\mathcal F_{n,j-1})_{j=1,\ldots,n}$, we can decompose
\begin{align*}
\mathfrak V_n 
&=
\frac{1}{t_n^2 n^2}\sum_{j=2}^n  
\e \Big[ 
\Big|\sum_{k \in  \mathbb Z} 
\mathscr U_{j,k} D_{j,k} Z_{n,j,k} \Big|^2 \mid \mathcal F_{n,j-1} \Big] \\
&=
\frac{1}{t_n^2 n} 
\sum_{k \in \mathbb Z}
x_k \left( \frac{ x_k w_k}{|c_k|^2} - 1 \right) I\{ \lambda_k \geq \alpha \gamma_k^\nu \}
\e |\mathscr U_{1,k}|^2 \\
&\phantom{=}
\phantom{\frac{1}{t_n^2 n^2} \sum_{j=2}^n \sum_{k \in \mathbb Z}}
\Bigg(
\sum_{i=1}^{n-1}
|\mathscr U_{i,k} D_{i,k}|^2 
+
\sum_{\substack{i,p=1,\\ i \neq p}}^{n-1}
\mathscr U_{i,k} D_{i,k}
\ov{\mathscr U_{p,k} D_{p,k}} \Bigg) \\
&= \mathfrak V_{n,1} + \mathfrak V_{n,2}.
\end{align*}
We define
\begin{align*}
\mathfrak H_n
:= 
\frac{\mathfrak V}{t_n^2 n} 
\sum_{k \in \mathbb Z} 
x_k \left( \frac{ x_k w_k}{|c_k|^2} - 1 \right)
 I\{ \lambda_k \geq \alpha \gamma_k^\nu \}
\sum_{i=1}^{n-1}  
\e|D_{i,k}|^2
\end{align*} and show
\begin{align*}
\mathfrak V_{n,1} = \mathfrak H_n + o(1)
\end{align*}
by proving the corresponding $L_2$-convergence. Afterwards we show that $\mathfrak H_n$ converges in probability to $\mathfrak V$. 
Writing for $i \in \{1,\ldots,n\}$ and $k \in \mathbb Z$ 
\begin{align*}
&|\mathscr U_{i,k} D_{i,k}|^2
\e |\mathscr U_{1,k}|^2 
-
\mathfrak V
\e|D_{i,k}|^2 \\
&=
\mathfrak V^{1/2}
\Big[  
|\mathscr U_{i,k} D_{i,k}|^2 
-
\mathfrak V^{1/2}
\e|D_{i,k}|^2
\Big] 
-
|\mathscr U_{i,k} D_{i,k}|^2 |\langle \beta, \phi_k \rangle|^2 x_k.
\end{align*}
and, observing that $\sigma^2 + \sum_{m \in \mathbb Z} |\langle \beta, \phi_m \rangle|^2 x_m  \leq C_1$ for some constant $C_1>0$, we get
\begin{align*}
&\e \left( \mathfrak V_{n,1} - \mathfrak H_n \right)^2 \leq \mathbb V_{n,1} + \mathbb V_{n,2} + \mathbb V_{n,3}
\end{align*}
with
\begin{align*}
\mathbb V_{n,1}&=
\frac{C}{t_n^4 n^2}
\sum_{k \in \mathbb Z} 
x_k^2 \left( \frac{ x_k w_k}{|c_k|^2} - 1 \right)^2
 I\{ \lambda_k \geq \alpha \gamma_k^\nu \} \\
&\phantom{=}
\phantom{\frac{C}{t_n^4 n^2} \sum_{k \in \mathbb Z}}
\Bigg\{
\sum_{i=1}^{n-1}
\e \Big(  
|\mathscr U_{i,k} D_{i,k}|^2 
 -  
\mathfrak V^{1/2}
\e|D_{i,k}|^2 \Big)^2\\
&\phantom{=}
\phantom{\frac{C}{t_n^4 n^2} \sum_{k \in \mathbb Z}\Big\{}
+
\sum_{\substack{i,p=1, \\ i \neq p}}^{n-1}
\e \Big[  
|\mathscr U_{i,k} D_{i,k}|^2 -  
\mathfrak V^{1/2}
\e|D_{1,k}|^2 \Big]
\e \Big[ (|\mathscr U_{p,k} D_{p,k}|^2 -  
\mathfrak V^{1/2}
\e|D_{1,k}|^2 \Big] 
\Bigg\}\\
\mathbb V_{n,2}&{=}
\frac{C}{t_n^4 n^2}
\sum_{\substack{k,l \in \mathbb Z, \\ |k| \neq |l|}} 
x_k \left( \frac{ x_k w_k}{|c_k|^2} - 1 \right)
 I\{ \lambda_k \geq \alpha \gamma_k^\nu \} 
x_l \left( \frac{ x_l w_l}{|c_l|^2} - 1 \right)
 I\{ \lambda_l \geq \alpha \gamma_l^\nu \}  \\
&\phantom{=}
\phantom{+\frac{1}{t_n^4 n^2}} 
\phantom{\sum_{\substack{k,l \in \mathbb Z, \\ k \neq l}} }
\Bigg\{
\sum_{i=1}^{n-1}
\e \Big[ 
\Big( |\mathscr U_{i,k} D_{i,k}|^2  
-  
\mathfrak V^{1/2}
\e|D_{i,k}|^2\Big)
\Big( |\mathscr U_{i,l} D_{i,l}|^2 
-  
\mathfrak V^{1/2}
\e|D_{i,l}|^2\Big)
\Big] \\
&\phantom{=}
\phantom{+\frac{1}{t_n^4 n^2}} 
\phantom{\sum_{\substack{k,l \in \mathbb Z, \\ k \neq l}} }
+\sum_{\substack{i,p=1, \\ i \neq p}}^{n-1}
\e \Big[ 
|\mathscr U_{i,k} D_{i,k}|^2  
-  
\mathfrak V^{1/2}
\e|D_{i,k}|^2\Big] 
\e \Big[ |\mathscr U_{i,l} D_{i,l}|^2 
-  
\mathfrak V^{1/2}
\e|D_{i,l}|^2\Big] 
\Bigg\} \\
\mathbb V_{n,3}&{=}
\frac{2}{t_n^4 n^2}
\e \Bigg(  
\sum_{k \in \mathbb Z} 
x_k^2 \left( \frac{ x_k w_k}{|c_k|^2} - 1 \right)
|\langle \beta, \phi_k \rangle|^2
I\{ \lambda_k \geq \alpha \gamma_k^\nu \}
\sum_{i=1}^{n-1}
|\mathscr U_{i,k} D_{i,k}|^2  
\Bigg)^2. \\
\end{align*}
We have
\begin{align}
\e |\mathscr U_{j,k} D_{j,k} |^2 
=
\Bigg( \sigma^2 + \sum_{\substack{m \in \mathbb Z,\\ |m| \neq |k|}} 
|\langle \beta, \phi_m \rangle|^2 x_m \Bigg) 
\left( \frac{w_k}{|c_k|^2} - \frac{1}{x_k} \right), \label{UDquadkorrekt}
\end{align}
because $|\mathscr U_{j,k}|^2$ and $|D_{j,k}|^2$ are uncorrelated for all $k \in \mathbb Z$ and $j \in \{1,\ldots,n\}$. With Lemma
\ref{Reihenkonvergenz} and \eqref{Dik4}, for all $i \in \{1,\ldots,n\}$ and $k \in
\mathcal K_n$, we have
\begin{align}
\e \left( |\mathscr U_{i,k} D_{i,k}|^2 
 - 
\mathfrak V^{1/2}
\e|D_{i,k}|^2 \right)^2 
&\leq
C \left( \e |D_{1,k}|^4 - \left( \e |D_{1,k}|^2 \right)^2 \right) \nn \leq
C \e |D_{1,k}|^4 \\
&\leq
\frac{C}{\alpha^2}. \label{UDquaddiff}
\end{align}
as well as
\begin{align}
\e \left[ |\mathscr U_{i,k} D_{i,k}|^2
 -
\mathfrak V^{1/2}
\e|D_{i,k}|^2 \right] 
&=
- \e|D_{1,k}|^2 |\langle \beta, \phi_k \rangle|^2 x_k \nn \\
=
-\left( \frac{x_k w_k}{|c_k|^2} - 1 \right)|\langle \beta, \phi_k \rangle|^2.
\label{UDdiffvereinfacht}
\end{align}
For the mixed terms with $k,l \in \mathbb Z, |k| \neq |l|$ and $i \in \{1,\ldots,n\}$ and $\frac{w_k}{|c_k|^2} - \frac{1}{x_k} \geq 0$ for
all $k \in \mathbb Z$, we get
\begin{align}
&\e \Big[ 
\Big( |\mathscr U_{i,k} D_{i,k}|^2  
-  
\mathfrak V^{1/2}
\e|D_{i,k}|^2\Big) 
\Big( |\mathscr U_{i,l} D_{i,l}|^2 
-  
\mathfrak V^{1/2}
\e|D_{i,l}|^2\Big)
\Big] \nn \\
&\leq
\e \Big[  
| \mathscr U_{1,k} D_{1,k} \mathscr U_{1,l} D_{1,l}|^2
\Big]
+
\Big( \frac{w_k}{|c_k|^2} - \frac{1}{x_k} \Big)
\Big( \frac{w_l}{|c_l|^2} - \frac{1}{x_l} \Big) \nn \\
&\leq
C\Bigg\{\frac{1}{\alpha^2}
| \langle \beta, \phi_k \rangle |^2 x_k | \langle \beta, \phi_l \rangle |^2 x_l
+
\frac{x_l}{\alpha}
| \langle \beta, \phi_l \rangle |^2 \Big( \frac{w_k}{|c_k|^2} - \frac{1}{x_k} \Big) \nn \\
&\phantom{=}
\phantom{C\Bigg\{}
+
\frac{x_k}{\alpha}
| \langle \beta, \phi_k \rangle |^2 \Big( \frac{w_l}{|c_l|^2} - \frac{1}{x_l} \Big)
+
\Big( \frac{w_k}{|c_k|^2} - \frac{1}{x_k} \Big)
\Big( \frac{w_l}{|c_l|^2} - \frac{1}{x_l} \Big) \Bigg\}.  \label{UDgemdiff}
\end{align}
Using this, we have 
\begin{align*}
\mathbb V_{n,1}
&\leq
\frac{C}{t_n^4 n^2}
\sum_{k \in \mathbb Z} 
x_k^2 \left( \frac{ x_k w_k}{|c_k|^2} - 1 \right)^2
 I\{ \lambda_k \geq \alpha \gamma_k^\nu \} 
\Bigg\{
\frac{n}{\alpha^2}
+
n^2 \left( \frac{x_k w_k}{|c_k|^2} - 1 \right)^2
|\langle \beta, \phi_k \rangle|^4
\Bigg\} \\
&\leq
\frac{C}{t_n^4 n \alpha^2}
\sum_{k \in \mathbb Z} 
x_k^2 \left( \frac{ x_k w_k}{|c_k|^2} - 1 \right)^2
I\{ \lambda_k \geq \alpha \gamma_k^\nu \} 
\\
&\phantom{=}
+
\frac{C}{t_n^4}
\sum_{k \in \mathbb Z} 
x_k^2 \left( \frac{ x_k w_k}{|c_k|^2} - 1 \right)^4
|\langle \beta, \phi_k \rangle|^4
 I\{ \lambda_k \geq \alpha \gamma_k^\nu \}	\\
&= o \left( 1 + \frac{1}{t_n^2} \right),
\end{align*}
with some constant $C>0$. With similar arguments we obtain
\begin{align*}
\mathbb V_{n,2}
&=
o\left( 
1
+
\frac{1}{t_n^2}
+
\frac{1}{\sqrt{n} t_n}
\right)
+ \mathcal O \left( \frac{1}{n} \right).
\end{align*}
and
\begin{align*}
\mathbb V_{n,3} 
&=
o \left( 1 + \frac{1}{t_n^2} + \frac1n + \frac{1}{\sqrt{n} t_n} \right).
\end{align*}
which altogether results in
\[ \mathfrak V_{n,1} = \mathfrak H_n + o_{P} \left( 1 \right). \]
The stochastic convergence of $\mathfrak H_n$ follows by
\begin{align*}
\mathfrak H_n 
&=
\mathfrak V
\frac{n-1}{t_n^2 n} 
\sum_{k \in \mathbb Z} 
\left( \frac{ x_k w_k}{|c_k|^2} - 1 \right)^2
I\{ \lambda_k \geq \alpha \gamma_k^\nu \} 
\stackrel{P}{\to}\mathfrak V
\end{align*}
for $n\to\infty$.
%
%
%
%
%
%
%
%
%
%
%
%
%
%
%
%
%
%
%
%
For proving, that $\mathfrak V_{n,2}$ converges stochastically to 0 we show again the corresponding $L_2$-convergence. To this end, we bound for all $i \in \{1,\ldots,n\}$ und
$k \in \mathbb Z$ the term $\e |\mathscr
U_{1,k}|^2$ by a constant $C<\infty$ using the centeredness of $U$ and Lemma
\ref{Reihenkonvergenz}, to obtain

\[ \mathfrak V_{n,2} = o_P \left( 1 \right). \]
The detailed arguments can be found in the supplementary material.
\end{proof}


The second step is to show the conditional Lindeberg condition by verifying an unconditional Ljapunov condition.

\begin{prop}\label{lindeberg} Under the assumptions of Theorem \ref{CLTteststatistik}, under the null hypothesis, and with $(X,W) \in \mathcal F_{\eta}^4$, we have
\begin{equation}
\forall \, \varepsilon >0: \; \sum_{j=2}^n  
\mathrm E \left[ 
Y_{n,j}^2 I\{ |Y_{n,j}| > \varepsilon \}
\mid \mathcal F_{n,j-1} \right] \Pkonv 0 \quad \text{as} \quad n \to \infty.
\end{equation}
\label{Lindeberg}
\end{prop}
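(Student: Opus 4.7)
The plan is to verify the stronger unconditional Lyapunov condition
\[
\sum_{j=2}^n \mathrm E\!\left[Y_{n,j}^{4}\right] \;\longrightarrow\; 0 \qquad (n\to\infty),
\]
which implies the conditional Lindeberg condition via the deterministic inequality $Y^{2}\mathbf 1\{|Y|>\varepsilon\} \le \varepsilon^{-2}Y^{4}$ together with the tower property. This reduces the problem to a fourth--moment estimate. I would use the Lyapunov route (rather than the truncation route) because the moment assumption $\E|U|^{128}\le\eta$ and $(X,W)\in\mathcal F_\eta^{4}$ inherited from Theorem \ref{CLTteststatistik} is more than strong enough, while avoiding delicate truncation splits conditional on $\mathcal F_{n,j-1}$.

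First I would condition on $\mathcal F_{n,j-1}$. Since the $j$th observation $(X_j,W_j,U_j)$ is independent of $\mathcal F_{n,j-1}$ and $Z_{n,j,k}$ is $\mathcal F_{n,j-1}$-measurable,
\[
\mathrm E\!\left[Y_{n,j}^{4}\mid \mathcal F_{n,j-1}\right]
\;=\; \frac{1}{t_n^{4}n^{4}}\;\mathrm E_j\!\Bigg|\sum_{k\in\mathbb Z} \mathscr U_{j,k}D_{j,k}Z_{n,j,k}\Bigg|^{4},
\]
where $\mathrm E_j$ is expectation with respect to $(X_j,W_j,U_j)$ only. Recalling the decomposition $\mathscr U_{j,k}=\sigma U_j + \sum_{|m|\ne|k|} S_{j,m}$, I would apply a Cauchy--Schwarz bound in the form
\[
\Big|\sum_k \mathscr U_{j,k}D_{j,k}Z_{n,j,k}\Big|^{2}
\;\le\; \Big(\sum_k |\mathscr U_{j,k}D_{j,k}|^{2}\Big)\Big(\sum_k |Z_{n,j,k}|^{2}\Big),
\]
square it, and then control the fourth moment by a product of $L^{4}$-norms using $(X,W)\in\mathcal F_\eta^{4}$, $\E|U|^{4}\le\eta$, Lemma~\ref{Reihenkonvergenz} for $\E|\mathscr U_{j,k}|^{4}$ uniformly in $k$, and Lemma~\ref{Dikscharf} for $\E|D_{j,k}|^{4}$.

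Then I would integrate out the $Z_{n,j,k}$ factors. Using the independence structure of the sum defining $Z_{n,j,k}$, the identity \eqref{UDquadkorrekt}, and noting that $|Z_{n,j,k}|^{2}$ at $j=n$ has order $(j-1)\,x_k^{2}(\tfrac{x_kw_k}{|c_k|^{2}}-1)\,\mathbf 1\{\lambda_k\ge\alpha\gamma_k^\nu\}$, the sum over $j=2,\dots,n$ produces a factor of order $n^{2}$. Combined with the normalisation $n^{-4}$ this leaves $n^{-2}$, and a further factor
\[
\frac{1}{t_n^{4}}\sum_{k\in\mathcal K_n}\Big(\tfrac{x_kw_k}{|c_k|^{2}}-1\Big)^{2} \;\asymp\;\frac{1}{t_n^{2}}
\]
coming from the diagonal in one pair of $k$-indices, so that after multiplying by the second diagonal pair one obtains a contribution of order $\tfrac{1}{t_n^{4}}\sum_{k\in\mathcal K_n}(\tfrac{x_kw_k}{|c_k|^{2}}-1)^{4}=o(1)$, which is exactly the assumption of Theorem~\ref{CLTteststatistik} and vanishes. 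Taking total expectation via the tower property finishes the argument.

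The principal obstacle is the bookkeeping when expanding the quadruple sum over $k_1,k_2,k_3,k_4$: one must verify that the off-diagonal contributions are negligible (they carry a further factor vanishing in $n$ via $\mathrm E[\mathscr U_{j,k}\overline{\mathscr U_{j,\ell}}]=0$ for $|k|\ne|\ell|$ under $H_0$), and that the diagonal contribution is precisely the one matched to $t_n^{4}$ as above. The same style of estimates already used in Proposition~\ref{varkonv} for $\mathfrak V_{n,1}$ and $\mathfrak V_{n,2}$ applies mutatis mutandis, only with fourth powers in place of squares.
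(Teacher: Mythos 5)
Your overall skeleton is the same as the paper's: reduce the conditional Lindeberg condition to the unconditional Lyapunov condition $\sum_{j=2}^n\mathrm E|Y_{n,j}|^4=o(1)$ (the elementary bound $Y^2I\{|Y|>\varepsilon\}\le\varepsilon^{-2}Y^4$ plus Markov is a fine substitute for the citation of \cite{AAM14} and \cite{GSS78}), and then exploit that $Z_{n,j,k}$ is $\mathcal F_{n,j-1}$-measurable while $\mathscr U_{j,k}D_{j,k}$ is independent of it. However, two steps in your execution would fail. First, the Cauchy--Schwarz bound $|\sum_k\mathscr U_{j,k}D_{j,k}Z_{n,j,k}|^2\le(\sum_k|\mathscr U_{j,k}D_{j,k}|^2)(\sum_k|Z_{n,j,k}|^2)$ is too lossy: it erases exactly the off-diagonal cancellation you later need. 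After this step the best you can say about the ``two diagonal pairs'' contribution is of order $t_n^{-4}\big(\sum_{k\in\mathcal K_n}(\tfrac{x_kw_k}{|c_k|^2}-1)^2\big)^2=t_n^{-4}(t_n^2)^2=O(1)$, not $o(1)$; moreover $\mathrm E\sum_{k\in\mathcal K_n}|\mathscr U_{j,k}D_{j,k}|^2=C\sum_{k\in\mathcal K_n}(\tfrac{w_k}{|c_k|^2}-\tfrac1{x_k})$ is only controlled by $|\mathcal K_n|/\alpha$, which the assumptions do not relate to $t_n$. The paper instead expands the quadruple sum over $(k,l,p,q)$ into $L_{n,1},\dots,L_{n,4}$ and kills the dangerous pair-diagonal term $L_{n,2}$ using the decay of $|\langle\beta,\phi_k\rangle|^2 x_k$ in \eqref{UDgemdiff} together with the extra factors $1/n$ and $1/(n\alpha^2)$ — this is genuine work that Cauchy--Schwarz cannot reproduce. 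Your accounting also conflates the full diagonal (which is where $t_n^{-4}\sum_{k\in\mathcal K_n}(\tfrac{x_kw_k}{|c_k|^2}-1)^4=o(1)$ enters) with the pair diagonal.

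Second, your stated mechanism for the vanishing of the fully off-diagonal term is incorrect: $\mathrm E[\mathscr U_{j,k}\overline{\mathscr U_{j,\ell}}]$ is \emph{not} zero for $|k|\neq|\ell|$; under $H_0$ it equals $\sigma^2\mathrm E[U^2]+\sum_{|m|\neq|k|,|\ell|}|\langle\beta,\phi_m\rangle|^2x_m>0$. What actually makes $L_{n,4}$ vanish (exactly, not merely up to a factor vanishing in $n$) is the structure of the $D$-factors: $\mathrm E[S_{j,k}D_{j,k}]=0$ as in \eqref{SD}, $D_{j,k}$ is uncorrelated with $S_{j,m}$ for all $|m|\neq|k|$, and $D_{j,k}$ is independent of $U_j$, so that when all four indices differ in absolute value every factor $\mathscr U_{j,\cdot}D_{j,\cdot}$ decouples and has mean zero. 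You would need to replace your cancellation argument by this one, and replace the Cauchy--Schwarz step by the explicit diagonal/off-diagonal decomposition, to obtain a complete proof.
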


%
\bigskip
\begin{proof}
It is shown in \cite{AAM14} and \cite{GSS78} that the conditional Lindeberg condition
follows from the unconditional Ljapunov condition. We will show in the following, that \[ \sum_{j=2}^n \e|Y_{n,j}|^4 = o(1) \]
and decompose
\[\sum_{j=2}^n \e|Y_{n,j}|^4=L_{n,1}+L_{n,2}+L_{n,3}+L_{n,4},\]
where
\begin{align*}
L_{n,1}
&=
\frac{1}{t_n^4 n^4} \sum_{j=2}^n 
\sum_{k \in \mathbb Z} 
\e \left|\mathscr U_{j,k} D_{j,k} Z_{n,j,k}\right|^4,\\
L_{n,2}&=
\frac{1}{t_n^4 n^4} \sum_{j=2}^n\sum_{\substack{k,l \in \mathbb Z,\\ |k| \neq |l|}} 
\e \left|\mathscr U_{j,k} D_{j,k} Z_{n,j,k} \ov{\mathscr U_{j,l} D_{j,l}
Z_{n,j,l}}\right|^2, \\
L_{n,3}&{=}
{\frac{1}{t_n^4 n^4} \sum_{j=2}^n }
\sum_{\substack{k,l,q \in \mathbb Z,\\ |k|,|l| \neq |q|, |k| \neq |l|}} 
\e \Big[|\mathscr U_{j,k} D_{j,k} Z_{n,j,k}|^2 
\mathscr U_{j,l} D_{j,l} Z_{n,j,l}
\ov{\mathscr U_{j,q} D_{j,q} Z_{n,j,q}}\Big], \\
L_{n,4}&{=}
{\frac{1}{t_n^4 n^4} \sum_{j=2}^n }
\sum_{\substack{k,l,p,q \in \mathbb Z,\\ |k|,|l|,|p| \neq |q|, \\ |k|,|l| \neq |p|, |k|
\neq |l|}} \e \Big[\mathscr U_{j,k} D_{j,k} Z_{n,j,k} 
\ov{\mathscr U_{j,l} D_{j,l} Z_{n,j,l}}
\mathscr U_{j,p} D_{j,p} Z_{n,j,p}
\ov{\mathscr U_{j,q} D_{j,q} Z_{n,j,q}}\Big].
\end{align*}
%
%
%
%
%
%
%
%
%
%
%
%
%
For $L_{n,1}$ we use, that for all $k \in \mathbb Z, n \in
\mathbb N, j \in \{1,\ldots,n\}$, $Z_{n,j,k}$ are stochastically independent of $\mathscr
U_{j,k} D_{j,k}$ and $\mathscr U_{j,k}$ are uncorrelated with $D_{j,k}$. Furthermore, the
fourth absolute moment of $\mathscr U_{j,k}$ is due to the centredness of $U$ and Lemma
\ref{Reihenkonvergenz} uniformly bounded. The fourth absolute moment of $D_{j,k}$ can be
estimated using Assumption \ref{ass:regulari} and $(X,W) \in \mathcal F_{\eta}^4$ as
\begin{align}
\e |D_{j,k}|^4 
\leq 
C \left( \frac{\e |\langle W, \phi_k \rangle |^4}{|c_k|^4} 
+
\frac{\e |\langle X, \phi_k \rangle |^4}{x_k^4} \right)
\leq
C \eta \left( \frac{w_k^2}{|c_k|^4} 
+
\frac{1}{x_k^2} \right)
\leq
\frac{C \eta}{\alpha^2}. \label{Dik4}
\end{align} 
Again using similar arguments, we obtain
\begin{align}
\e \left| 
\mathscr U_{i_1,k} D_{i_1,k}\right|^2 
=
\e | \mathscr U_{i_1,k} |^2 \e |D_{i_1,k}|^2 
&\leq
C \left( \frac{w_k}{|c_k|^2} - \frac{1}{x_k} \right). \label{UDquad}
\end{align}
This results in
\begin{align}
&\e \Big|
\sum_{i=1}^{j-1}
\mathscr U_{i,k}
D_{i,k}
x_k I\{ \lambda_k \geq \alpha \gamma_k^\nu \} 
\Big|^4 \nn \\
&=
x_k^4 I\{ \lambda_k \geq \alpha \gamma_k^\nu \} 
\Bigg\{
\sum_{i=1}^{j-1}
\e |\mathscr U_{i,k}|^4
\e |D_{i,k}|^4
+
2 \sum_{1 \leq i_1 < i_2 \leq j-1}
\e |\mathscr U_{i_1,k} D_{i_1,k}|^2
\e |\mathscr U_{i_2,k} D_{i_2,k}|^2
\Bigg\} \nn \\
&\leq
\frac{Cn}{\alpha^2} x_k^4 I\{ \lambda_k \geq \alpha \gamma_k^\nu \} 
+
C n^2 x_k^2 
\left( \frac{x_k w_k}{|c_k|^2} - 1 \right)^2
I\{ \lambda_k \geq \alpha \gamma_k^\nu \}. \label{Znjk4}
\end{align}
Putting these results together, we get
\begin{align*}
L_{n,1}
&=
\frac{1}{t_n^4 n^4} \sum_{j=2}^n 
\sum_{k \in \mathbb Z} 
\e |\mathscr U_{j,k}|^4 
\e |D_{j,k}|^4 
\e |Z_{n,j,k}|^4 \\
&\leq
\frac{C}{t_n^4 n^4 \alpha^2} \sum_{j=2}^n 
\sum_{k \in \mathbb Z} 
\e \Big|
\sum_{i=1}^{j-1}
\mathscr U_{i,k}
D_{i,k}
x_k I\{ \lambda_k \geq \alpha \gamma_k^\nu \} 
\Big|^4 \\
&\leq
\frac{C}{t_n^4 n \alpha^2}
\sum_{k \in \mathbb Z} 
x_k^2
I\{ \lambda_k \geq \alpha \gamma_k^\nu \} 
\left(
\frac{1}{n \alpha^2} x_k^2  
+
\left( \frac{x_k w_k}{|c_k|^2} - 1 \right)^2  
\right) \\
&=
o(1) \frac{1}{t_n^4} 
\left( \sum_{k \in \mathbb Z} 
 x_k^4
I\{ \lambda_k \geq \alpha \gamma_k^\nu \} 
+
\sum_{k \in \mathbb Z} 
 x_k^2
\left( \frac{x_k w_k}{|c_k|^2} - 1 \right)^2
I\{ \lambda_k \geq \alpha \gamma_k^\nu \}\right),
\end{align*}
where the first series converges due to Lemma \ref{Reihenkonvergenz} and the second series either also converges or, if not, can be bounded by $C t_n^2$. \\

%
%
%
%
%
%
%
%
%
%
%
%
%
Considering $L_{n,4}$, we use the stochastic independence of $Z_{n,j,k}$ and $\mathscr U_{j,l}
D_{j,l}$ for all $k,l \in \mathbb Z$, which results in
\begin{align*}
&\e \big[\mathscr U_{j,k} D_{j,k} Z_{n,j,k} 
\ov{\mathscr U_{j,l}} \ov{D_{j,l}} \ov{Z_{n,j,l}}
\mathscr U_{j,p} D_{j,p} Z_{n,j,p}
\ov{\mathscr U_{j,q}} \ov{D_{j,q}} \ov{Z_{n,j,q}}\big] \\
&=
\e \big[\mathscr U_{j,k} D_{j,k}  
\ov{\mathscr U_{j,l}} \ov{D_{j,l}} 
\mathscr U_{j,p} D_{j,p} 
\ov{\mathscr U_{j,q}} \ov{D_{j,q}} \big]
\e \big[ Z_{n,j,k} \ov{Z_{n,j,l}} Z_{n,j,p} \ov{Z_{n,j,q}}\big].
\end{align*}
The rest of the argumentation is just calculating the expectations using that
for all $j \in \{1,\ldots,n\}$, 
$D_{j,k}, D_{j,l}, D_{j,p}$ and $D_{j,q}$ are uncorellated with $S_{j,m}$ for all $m \in
\mathbb Z \backslash \{m \in \mathbb Z: |m| = |k|,|l|,|p|,|q|\}$ and stochastically independent of $U_j$. Finally,
\begin{align}
\e [S_{j,k} D_{j,k}]
&=
\langle \beta, \phi_k \rangle
\e \left[
\langle \phi_k, X_j \rangle 
\left( 
\frac{\langle W_j, \phi_k \rangle}{c_k} - \frac{\langle X_j, \phi_k \rangle}{x_k}
\right)\right] =
\langle \beta, \phi_k \rangle
\left( \frac{c_k}{c_k} - \frac{x_k}{x_k} \right)=0 \label{SD}
\end{align}
and, in the same way, $\e [\ov{S_{j,k}}D_{j,k}]=\e [S_{j,k} \ov{D_{j,k}}]=0$, which gives $L_{n,4}=0$.
%
%
%
%
%
%
%
%
%
%
%
%
%
\bigskip

With similar arguments as above, which can be found in the supplementary material we get

\begin{align*}
L_{n,2} 
&=
o\left(\frac{1}{t_n^4} + \frac{1}{t_n^2 n} 
+ \frac{1}{t_n^2}
+  \frac{1}{t_n \sqrt{n}}
\right)
+
\mathcal O \left(\frac1n +\frac{1}{n^2} \right) =
o(1),
\end{align*}
and
%
%
%
%
%
%
%
%
%
%
%
%
%
%
\bigskip

\begin{align*}
L_{n,3}
= o \left( \frac{1}{t_n^2 n} \right).
\end{align*} 
\end{proof}

All remainder terms can be estimated with similar techniques. We exemplarily show the idea for Proposition \ref{Rn1245}, that is for $R_{n,2}$, in the supplementary material.


\section{Proof of Theorem \ref{CLTboot}}

Let $\Phi_{\mathfrak V}(\cdot)$ denote the distribution function of the normal distribution with mean zero and variance $\mathfrak V$, $F_n$ the distribution function of $\frac{n}{t_n}
\left(T_n -\mathfrak B_n- \mathfrak R_n \right)$ and $F_{\mathcal S_n,n}^*$ the distribution function of the conditional distribution of $\frac{n}{t_n} \left(T_n^* -\mathfrak B_n^*- \mathfrak R_n^* \right)$ given
$\mathcal S_n$.
By bounding
\begin{align*}
\sup_{t \in \mathbb R}
\left| F_{\mathcal S_n,n}^*(t) - F_n(t) \right|
&\leq
\sup_{t \in \mathbb R}
\left| F_{\mathcal S_n,n}^*(t) - \Phi_{\mathfrak V}(t) \right|
+
\sup_{t \in \mathbb R}
\left| F_n(t) - \Phi_{\mathfrak V}(t)\right| =: M_{1,n} + M_{2,n},
\end{align*}
similar to the example in Section 29 of \cite{Das08}, it is enough to show the convergence
of $M_{1,n}$ and $M_{2,n}$. Due to the continuity of $\phi_{\mathfrak V}$, the convergence of $M_{2,n}$ follows directly from Theorem \ref{CLTteststatistik} and Polya's Theorem, as stated in Section 1.5.3 of \cite{Ser80}. 
Again, using Polya's Theorem, it is enough to show for $M_{1,n}$, that
for all $\varepsilon >0$
\begin{align}
\lim_{n \to \infty} \mathbb P \left( 
\left|F_{\mathcal S_n,n}^*(t) - \phi_{\mathfrak V}(t)
\right| > \varepsilon \right)
= 0.
\label{polya}
\end{align}
For this we just immitate the proof of Theorem \ref{CLTteststatistik}. Analogously to (\ref{ZerlTeststat}), we decompose 
\begin{align*}
\frac{n}{t_n} T_n^* 
&=
\frac{1}{t_n} \sum_{j=1}^n 
\left| \left\langle 
T_{n,1}^* + T_{n,2}^* + T_{n,3}^*, X_j
\right\rangle \right|^2
+
\frac{1}{t_n} \sum_{j=1}^n 
\langle 
T_{n,1}^* + T_{n,2}^*+ T_{n,3}^*, X_j \rangle
\langle X_j, R_n^* \rangle \\
&\phantom{=}
+
\frac{1}{t_n} \sum_{j=1}^n 
\langle 
X_j, T_{n,1}^* + T_{n,2}^* + T_{n,3}^* \rangle
\langle R_n^*, X_j \rangle
+
\frac{1}{t_n} \sum_{j=1}^n 
\left| \left\langle 
R_n^*, X_j
\right\rangle \right|^2,
\end{align*}
where, similar to the proof of Theorem \ref{CLTteststatistik}, we get
\[ \frac{1}{t_n} \sum_{j=1}^n 
\left| \left\langle 
R_n^*, X_j
\right\rangle \right|^2 -\frac{n}{t_n}\mathfrak R_n 
=
\frac{n}{t_n} R_{n,3}^* + \frac{n}{t_n} \left( R_{n,2}^* - \mathfrak R_n \right)
+ \frac{n}{t_n} \left( R_{n,1}^* + R_{n,4}^* + R_{n,5}^* \right).  \] 
Then, $\frac{n}{t_n} (R_{n,3}^*-\mathfrak B_n^*-\mathfrak R_n^*)$ converges weakly in probability
to $\mathcal N(0, \mathfrak V)$ along the lines of Theorem \ref{CLTRn3}.
The remainder terms can be discussed to be negligible with the same arguments as for the remainder terms in Theorem \ref{CLTteststatistik}.
\hfill $\Box$

\section{Supplementary Material}
\subsection{Proof of Proposition A.5}
We give only the proof for $R_{n,2}$. We have
\begin{align*}
&\frac{n^2}{t_n^2}\e | R_{n,2} - \mathfrak R_n|^2 \\
&\leq
\frac{1}{t_n^2 n^2} \sum_{k \in \mathbb Z} 
x_k^2 I\{ \lambda_k \geq \alpha \gamma_k^\nu \} 
\e \Bigg|
\sum_{i=1}^n
\left(|D_{i,k} \mathscr U_{i,k}|^2  
-
\mathfrak V^{1/2}
\left(\frac{w_k}{|c_k|^2} -\frac{1}{x_k} \right)\right)
\Bigg|^2 \\
&\phantom{=}
+
\frac{1}{t_n^2 n^2} \sum_{\substack{k,l \in \mathbb Z, \\ |k| \neq |l|}} 
x_k I\{ \lambda_k \geq \alpha \gamma_k^\nu \} 
x_l I\{ \lambda_l \geq \alpha \gamma_l^\nu \} \\
&\phantom{=}
\phantom{\frac{1}{t_n^2 n^2} \sum_{\substack{k,l \in \mathbb Z, \\ k \neq l}} }
\sum_{i=1}^n
\e \Big[
\Big(|D_{i,k} \mathscr U_{i,k}|^2  
-
\mathfrak V^{1/2}
\Big(\frac{w_k}{|c_k|^2} -\frac{1}{x_k} \Big)\Big) 
\Big(|D_{i,l} \mathscr U_{i,l}|^2  
-
\mathfrak V^{1/2}
\Big(\frac{w_l}{|c_l|^2} -\frac{1}{x_l} \Big)\Big)
\Big] \\
&\phantom{=}
+
\frac{1}{t_n^2 n^2} \sum_{\substack{k,l \in \mathbb Z, \\ |k| \neq |l|}} 
x_k I\{ \lambda_k \geq \alpha \gamma_k^\nu \} 
x_l I\{ \lambda_l \geq \alpha \gamma_l^\nu \} \\
&\phantom{=}
\phantom{\frac{1}{t_n^2 n^2} \sum_{\substack{k,l \in \mathbb Z, \\ k \neq l}} }
\sum_{\substack{i,p=1,\\ i \neq p}}^n
\e \Big[
|D_{i,k} \mathscr U_{i,k}|^2  
-
\mathfrak V^{1/2}
\Big(\frac{w_k}{|c_k|^2} -\frac{1}{x_k} \Big)\Big]
\e \Big[|D_{p,l} \mathscr U_{p,l}|^2  
-
\mathfrak V^{1/2}
\Big(\frac{w_l}{|c_l|^2} -\frac{1}{x_l} \Big)\Big].
\end{align*}
The terms quadratic in $k \in \mathbb Z$ can be estimated by
Lemma B.1 und \eqref{Dik4}, while the other terms except the one coming from $|\langle \beta, \phi_k \rangle |^2 x_k$ vanish
\begin{align*}
\e \Bigg|
\sum_{i=1}^n
\left(|D_{i,k} \mathscr U_{i,k}|^2  
-
\mathfrak V^{1/2} 
\left(\frac{w_k}{|c_k|^2} -\frac{1}{x_k} \right)\right)
\Bigg|^2\leq
\frac{Cn}{\alpha^2}
+
Cn^2 \left(\frac{x_k w_k}{|c_k|^2} -1 \right)^2
|\langle \beta, \phi_k \rangle |^4 \left(1 + \frac1n\right).
\end{align*}
Using the Cauchy-Schwarz inequality \eqref{UDquaddiff}, leads to
\begin{align*}
&\e \Bigg[
\left(|D_{i,k} \mathscr U_{i,k}|^2  
-
\mathfrak V^{1/2}
\left(\frac{w_k}{|c_k|^2} -\frac{1}{x_k} \right)\right) 
\left(|D_{i,l} \mathscr U_{i,l}|^2  
-
\mathfrak V^{1/2}
\left(\frac{w_l}{|c_l|^2} -\frac{1}{x_l} \right)\right)
\Bigg] \leq 
\frac{C}{\alpha^2}.
\end{align*}
The expectations with $k,l \in \mathbb Z$, $|k| \neq |l|$ und $i,p \in \{1,\ldots,n\}$,
$i \neq p$ can be estimated by \eqref{UDdiffvereinfacht}. This
finally yields
\begin{align*}
&\frac{n^2}{t_n^2}\e | R_{n,2} - \mathfrak R_n|^2 \\
&\leq
\frac{1}{t_n^2 n^2} \sum_{k \in \mathbb Z} 
x_k^2 I\{ \lambda_k \geq \alpha \gamma_k^\nu \} 
\Bigg\{
\frac{Cn}{\alpha^2}
+
Cn^2 \left(\frac{x_k w_k}{|c_k|^2} -1 \right)^2
|\langle \beta, \phi_k \rangle |^4 \left(1 + \frac1n\right)  \Bigg\} \\
&\phantom{=}
+
\frac{C}{t_n^2 n^2} \sum_{\substack{k,l \in \mathbb Z, \\ |k| \neq |l|}} 
x_k I\{ \lambda_k \geq \alpha \gamma_k^\nu \} 
x_l I\{ \lambda_l \geq \alpha \gamma_l^\nu \} \\
&\phantom{=}
\phantom{\frac{1}{t_n^2 n^2} \sum_{\substack{k,l \in \mathbb Z, \\ k \neq l}} }
\Bigg\{ 
\frac{n }{\alpha^2} 
+
n (n -1) \left( \frac{x_k w_k}{|c_k|^2} - 1 \right)|\langle \beta, \phi_k \rangle |^2 
\left( \frac{x_l w_l}{|c_l|^2} - 1 \right)|\langle \beta, \phi_l \rangle |^2
\Bigg\} \\
&=
o \left( 1 + \frac{1}{t_n^2} \right).
\end{align*}
The second part can be shown by using
\begin{align}
\frac{x_k w_k}{|c_k|^2} - 1  \leq \frac{1}{\alpha} (x_k - \lambda_k),
\label{difftn}
\end{align} for all $k \in \mathscr K_n$ together with Lemma B.1 
\hfill $\Box$

All the other parts of Proposition A.5 as well as Lemmas A.2-A.4 follow by very similar techniques. For details we refer to [5] in the main article.

\subsection{Details for the proof of Proposition C.1}

Using that $\mathscr
U_{j,k} D_{j,k}\ov{\mathscr U_{j,l} D_{j,l} }$ is independent of $(\mathcal F_{n,j-1})_{j=1,\ldots,n}$, we can decompose
\begin{align*}
\mathfrak V_n 
&=
\frac{1}{t_n^2 n^2}\sum_{j=2}^n  
\e \Big[ 
\Big|\sum_{k \in  \mathbb Z} 
\mathscr U_{j,k} D_{j,k} Z_{n,j,k} \Big|^2 \mid \mathcal F_{n,j-1} \Big] \\
&=
\frac{1}{t_n^2 n} 
\sum_{k \in \mathbb Z}
x_k \left( \frac{ x_k w_k}{|c_k|^2} - 1 \right) I\{ \lambda_k \geq \alpha \gamma_k^\nu \}
\e |\mathscr U_{1,k}|^2 \\
&\phantom{=}
\phantom{\frac{1}{t_n^2 n^2} \sum_{j=2}^n \sum_{k \in \mathbb Z}}
\Bigg(
\sum_{i=1}^{n-1}
|\mathscr U_{i,k} D_{i,k}|^2 
+
\sum_{\substack{i,p=1,\\ i \neq p}}^{n-1}
\mathscr U_{i,k} D_{i,k}
\ov{\mathscr U_{p,k} D_{p,k}} \Bigg) \\
&= \mathfrak V_{n,1} + \mathfrak V_{n,2}.
\end{align*}
We define
\begin{align*}
\mathfrak H_n
:= 
\frac{\mathfrak V}{t_n^2 n} 
\sum_{k \in \mathbb Z} 
x_k \left( \frac{ x_k w_k}{|c_k|^2} - 1 \right)
 I\{ \lambda_k \geq \alpha \gamma_k^\nu \}
\sum_{i=1}^{n-1}  
\e|D_{i,k}|^2
\end{align*} and show
\begin{align*}
\mathfrak V_{n,1} = \mathfrak H_n + o(1)
\end{align*}
by proving the corresponding $L_2$-convergence. Afterwards we show that $\mathfrak H_n$ converges in probability to $\mathfrak V$. 
Writing for $i \in \{1,\ldots,n\}$ and $k \in \mathbb Z$ 
\begin{align*}
&|\mathscr U_{i,k} D_{i,k}|^2
\e |\mathscr U_{1,k}|^2 
-
\mathfrak V
\e|D_{i,k}|^2 \\
&=
\mathfrak V^{1/2}
\Big[  
|\mathscr U_{i,k} D_{i,k}|^2 
-
\mathfrak V^{1/2}
\e|D_{i,k}|^2
\Big] 
-
|\mathscr U_{i,k} D_{i,k}|^2 |\langle \beta, \phi_k \rangle|^2 x_k.
\end{align*}
and, observing that $\sigma^2 + \sum_{m \in \mathbb Z} |\langle \beta, \phi_m \rangle|^2 x_m  \leq C_1$ for some constant $C_1>0$, we get
\begin{align*}
&\e \left( \mathfrak V_{n,1} - \mathfrak H_n \right)^2 \leq \mathbb V_{n,1} + \mathbb V_{n,2} + \mathbb V_{n,3}
\end{align*}
with
\begin{align*}
\mathbb V_{n,1}&=
\frac{C}{t_n^4 n^2}
\sum_{k \in \mathbb Z} 
x_k^2 \left( \frac{ x_k w_k}{|c_k|^2} - 1 \right)^2
 I\{ \lambda_k \geq \alpha \gamma_k^\nu \} \\
&\phantom{=}
\phantom{\frac{C}{t_n^4 n^2} \sum_{k \in \mathbb Z}}
\Bigg\{
\sum_{i=1}^{n-1}
\e \Big(  
|\mathscr U_{i,k} D_{i,k}|^2 
 -  
\mathfrak V^{1/2}
\e|D_{i,k}|^2 \Big)^2\\
&\phantom{=}
\phantom{\frac{C}{t_n^4 n^2} \sum_{k \in \mathbb Z}\Big\{}
+
\sum_{\substack{i,p=1, \\ i \neq p}}^{n-1}
\e \Big[  
|\mathscr U_{i,k} D_{i,k}|^2 -  
\mathfrak V^{1/2}
\e|D_{1,k}|^2 \Big]
\e \Big[ (|\mathscr U_{p,k} D_{p,k}|^2 -  
\mathfrak V^{1/2}
\e|D_{1,k}|^2 \Big] 
\Bigg\},\\
\mathbb V_{n,2}&{=}
\frac{C}{t_n^4 n^2}
\sum_{\substack{k,l \in \mathbb Z, \\ |k| \neq |l|}} 
x_k \left( \frac{ x_k w_k}{|c_k|^2} - 1 \right)
 I\{ \lambda_k \geq \alpha \gamma_k^\nu \} 
x_l \left( \frac{ x_l w_l}{|c_l|^2} - 1 \right)
 I\{ \lambda_l \geq \alpha \gamma_l^\nu \}  \\
&\phantom{=}
\phantom{+\frac{1}{t_n^4 n^2}} 
\phantom{\sum_{\substack{k,l \in \mathbb Z, \\ k \neq l}} }
\Bigg\{
\sum_{i=1}^{n-1}
\e \Big[ 
\Big( |\mathscr U_{i,k} D_{i,k}|^2  
-  
\mathfrak V^{1/2}
\e|D_{i,k}|^2\Big)
\Big( |\mathscr U_{i,l} D_{i,l}|^2 
-  
\mathfrak V^{1/2}
\e|D_{i,l}|^2\Big)
\Big] \\
&\phantom{=}
\phantom{+\frac{1}{t_n^4 n^2}} 
\phantom{\sum_{\substack{k,l \in \mathbb Z, \\ k \neq l}} }
+\sum_{\substack{i,p=1, \\ i \neq p}}^{n-1}
\e \Big[ 
|\mathscr U_{i,k} D_{i,k}|^2  
-  
\mathfrak V^{1/2}
\e|D_{i,k}|^2\Big] 
\e \Big[ |\mathscr U_{i,l} D_{i,l}|^2 
-  
\mathfrak V^{1/2}
\e|D_{i,l}|^2\Big] 
\Bigg\}, \\
\mathbb V_{n,3}&{=}
\frac{2}{t_n^4 n^2}
\e \Bigg(  
\sum_{k \in \mathbb Z} 
x_k^2 \left( \frac{ x_k w_k}{|c_k|^2} - 1 \right)
|\langle \beta, \phi_k \rangle|^2
I\{ \lambda_k \geq \alpha \gamma_k^\nu \}
\sum_{i=1}^{n-1}
|\mathscr U_{i,k} D_{i,k}|^2  
\Bigg)^2. \\
\end{align*}
We have
\begin{align}
\e |\mathscr U_{j,k} D_{j,k} |^2 
=
\Bigg( \sigma^2 + \sum_{\substack{m \in \mathbb Z,\\ |m| \neq |k|}} 
|\langle \beta, \phi_m \rangle|^2 x_m \Bigg) 
\left( \frac{w_k}{|c_k|^2} - \frac{1}{x_k} \right), \label{UDquadkorrekt}
\end{align}
because $|\mathscr U_{j,k}|^2$ and $|D_{j,k}|^2$ are uncorrelated for all $k \in \mathbb Z$ and $j \in \{1,\ldots,n\}$. With Lemma
B.1 and \eqref{Dik4}, for all $i \in \{1,\ldots,n\}$ and $k \in
\mathcal K_n$, we have
\begin{align}
\e \left( |\mathscr U_{i,k} D_{i,k}|^2 
 - 
\mathfrak V^{1/2}
\e|D_{i,k}|^2 \right)^2 
&\leq
C \left( \e |D_{1,k}|^4 - \left( \e |D_{1,k}|^2 \right)^2 \right) \nn \leq
C \e |D_{1,k}|^4 \\
&\leq
\frac{C}{\alpha^2} \label{UDquaddiff}
\end{align}
as well as
\begin{align}
\e \left[ |\mathscr U_{i,k} D_{i,k}|^2
 -
\mathfrak V^{1/2}
\e|D_{i,k}|^2 \right] 
&=
- \e|D_{1,k}|^2 |\langle \beta, \phi_k \rangle|^2 x_k \nn \\
=
-\left( \frac{x_k w_k}{|c_k|^2} - 1 \right)|\langle \beta, \phi_k \rangle|^2.
\label{UDdiffvereinfacht}
\end{align}
For the mixed terms with $k,l \in \mathbb Z, |k| \neq |l|$ and $i \in \{1,\ldots,n\}$ and $\frac{w_k}{|c_k|^2} - \frac{1}{x_k} \geq 0$ for
all $k \in \mathbb Z$, we get
\begin{align}
&\e \Big[ 
\Big( |\mathscr U_{i,k} D_{i,k}|^2  
-  
\mathfrak V^{1/2}
\e|D_{i,k}|^2\Big) 
\Big( |\mathscr U_{i,l} D_{i,l}|^2 
-  
\mathfrak V^{1/2}
\e|D_{i,l}|^2\Big)
\Big] \nn \\
&\leq
\e \Big[  
| \mathscr U_{1,k} D_{1,k} \mathscr U_{1,l} D_{1,l}|^2
\Big]
+
\Big( \frac{w_k}{|c_k|^2} - \frac{1}{x_k} \Big)
\Big( \frac{w_l}{|c_l|^2} - \frac{1}{x_l} \Big) \nn \\
&\leq
C\Bigg\{\frac{1}{\alpha^2}
| \langle \beta, \phi_k \rangle |^2 x_k | \langle \beta, \phi_l \rangle |^2 x_l
+
\frac{x_l}{\alpha}
| \langle \beta, \phi_l \rangle |^2 \Big( \frac{w_k}{|c_k|^2} - \frac{1}{x_k} \Big) \nn \\
&\phantom{=}
\phantom{C\Bigg\{}
+
\frac{x_k}{\alpha}
| \langle \beta, \phi_k \rangle |^2 \Big( \frac{w_l}{|c_l|^2} - \frac{1}{x_l} \Big)
+
\Big( \frac{w_k}{|c_k|^2} - \frac{1}{x_k} \Big)
\Big( \frac{w_l}{|c_l|^2} - \frac{1}{x_l} \Big) \Bigg\}.  \label{UDgemdiff}
\end{align}
Using this, we have 
\begin{align*}
\mathbb V_{n,1}
&\leq
\frac{C}{t_n^4 n^2}
\sum_{k \in \mathbb Z} 
x_k^2 \left( \frac{ x_k w_k}{|c_k|^2} - 1 \right)^2
 I\{ \lambda_k \geq \alpha \gamma_k^\nu \} 
\Bigg\{
\frac{n}{\alpha^2}
+
n^2 \left( \frac{x_k w_k}{|c_k|^2} - 1 \right)^2
|\langle \beta, \phi_k \rangle|^4
\Bigg\} \\
&\leq
\frac{C}{t_n^4 n \alpha^2}
\sum_{k \in \mathbb Z} 
x_k^2 \left( \frac{ x_k w_k}{|c_k|^2} - 1 \right)^2
I\{ \lambda_k \geq \alpha \gamma_k^\nu \} 
\\
&\phantom{=}
+
\frac{C}{t_n^4}
\sum_{k \in \mathbb Z} 
x_k^2 \left( \frac{ x_k w_k}{|c_k|^2} - 1 \right)^4
|\langle \beta, \phi_k \rangle|^4
 I\{ \lambda_k \geq \alpha \gamma_k^\nu \}	\\
&= o \left( 1 + \frac{1}{t_n^2} \right),
\end{align*}
with some constant $C>0$. With similar arguments, we obtain
\begin{align*}
\mathbb V_{n,2} 
&\leq
\frac{C}{t_n^4} \Bigg\{
\frac{1}{n \alpha^2}
\left(\sum_{k \in \mathbb Z} x_k^2 \left( \frac{x_k w_k}{|c_k|^2} - 1 \right)
|\langle \beta, \phi_k \rangle|^2
 I\{ \lambda_k \geq \alpha \gamma_k^\nu \} 
\right)^2 
\\
&\phantom{=}
\phantom{+\frac{C}{t_n^4} \Bigg\{}
+
\frac{t_n^2}{n \alpha}
\left(\sum_{l \in \mathbb Z} x_l^2 \left( \frac{x_l w_l}{|c_l|^2} - 1 \right)
|\langle \beta, \phi_l \rangle|^2
 I\{ \lambda_l \geq \alpha \gamma_l^\nu \} 
\right)
+
\frac{(t_n^2)^2}{n}  \Bigg\}\\
&\phantom{=}
+
\frac{C}{t_n^4}
\left(
\sum_{k \in \mathbb Z} 
x_k \left( \frac{ x_k w_k}{|c_k|^2} - 1 \right)^2
|\langle \beta, \phi_k \rangle|^2
 I\{ \lambda_k \geq \alpha \gamma_k^\nu \} 
\right)^2,
\end{align*}
which can be further bounded using the Cauchy-Schwarz inequality to get
\begin{align*}
\mathbb V_{n,2}
&=
o\left( 
1
+
\frac{1}{t_n^2}
+
\frac{1}{\sqrt{n} t_n}
\right)
+ \mathcal O \left( \frac{1}{n} \right).
\end{align*}
Using similar arguments as for the first two terms, $V_{n,3}$ can also be bounded to get
\begin{align*}
\mathbb V_{n,3} 
&\leq
\frac{C}{t_n^4 n \alpha^2}
\sum_{k \in \mathbb Z} 
x_k^4 \left( \frac{ x_k w_k}{|c_k|^2} - 1 \right)^2
|\langle \beta, \phi_k \rangle|^4
I\{ \lambda_k \geq \alpha \gamma_k^\nu \} \\
&\phantom{=}
+
\frac{C}{t_n^4}
\sum_{k \in \mathbb Z} 
x_k^2 \left( \frac{ x_k w_k}{|c_k|^2} - 1 \right)^4
|\langle \beta, \phi_k \rangle|^4
I\{ \lambda_k \geq \alpha \gamma_k^\nu \} \\
&\phantom{=}
+\frac{C}{t_n^4 n \alpha^2}  
\left(\sum_{k \in \mathbb Z} 
x_k^3 \left( \frac{ x_k w_k}{|c_k|^2} - 1 \right)
|\langle \beta, \phi_k \rangle|^4
I\{ \lambda_k \geq \alpha \gamma_k^\nu \}
\right)^2 \\
&\phantom{=}
+\frac{C}{t_n^4 n}  
\left(\sum_{k \in \mathbb Z} 
x_k \left( \frac{ x_k w_k}{|c_k|^2} - 1 \right)^2
|\langle \beta, \phi_k \rangle|^2
I\{ \lambda_k \geq \alpha \gamma_k^\nu \}
\right)^2 \\
&\phantom{=}
+
\frac{C}{t_n^4 n \alpha}
\sum_{k \in \mathbb Z} 
x_k \left( \frac{ x_k w_k}{|c_k|^2} - 1 \right)^2
|\langle \beta, \phi_k \rangle|^2
I\{ \lambda_k \geq \alpha \gamma_k^\nu \} \\
&\phantom{=}
\phantom{\frac{C}{t_n^4 n \alpha}\sum_{k \in \mathbb Z} }
\sum_{k \in \mathbb Z}
x_l^3 \left( \frac{ x_l w_l}{|c_l|^2} - 1 \right)
|\langle \beta, \phi_l \rangle|^4
I\{ \lambda_l \geq \alpha \gamma_l^\nu \}  \\
&\phantom{=}
+
\frac{C}{t_n^4 }
\left(
\sum_{k \in \mathbb Z} 
x_k \left( \frac{ x_k w_k}{|c_k|^2} - 1 \right)^2
|\langle \beta, \phi_k \rangle|^2
I\{ \lambda_k \geq \alpha \gamma_k^\nu \}
\right)^2 \\
&=
o \left( 1 + \frac{1}{t_n^2} + \frac1n + \frac{1}{\sqrt{n} t_n} \right).
\end{align*}
Altogether, we have
\[ \mathfrak V_{n,1} = \mathfrak H_n + o_{P} \left( 1 \right). \]
The stochastic convergence of $\mathfrak H_n$ follows by
\begin{align*}
\mathfrak H_n 
&=
\mathfrak V
\frac{n-1}{t_n^2 n} 
\sum_{k \in \mathbb Z} 
\left( \frac{ x_k w_k}{|c_k|^2} - 1 \right)^2
I\{ \lambda_k \geq \alpha \gamma_k^\nu \} 
\stackrel{P}{\to}\mathfrak V
\end{align*}
for $n\to\infty$.
%
%
%
%
%
%
%
%
%
%
%
%
%
%
%
%
%
%
%
%
For proving, that $\mathfrak V_{n,2}$ converges stochastically to 0 we show again the corresponding $L_2$-convergence. To this end we bound for all $i \in \{1,\ldots,n\}$ und
$k \in \mathbb Z$ the term $\e |\mathscr
U_{1,k}|^2$ by a constant $C<\infty$ using the centredness of $U$ and Lemma
B.1, to obtain
\begin{align*}
\e |\mathfrak V_{n,2}|^2 
&\leq
\frac{C}{t_n^4 n^2} \Bigg\{
\sum_{k \in \mathbb Z}
x_k^2 \left( \frac{ x_k w_k}{|c_k|^2} - 1 \right)^2 I\{ \lambda_k \geq \alpha \gamma_k^\nu
\} 
\e \Bigg|\sum_{\substack{i,p=1,\\ i \neq p}}^{n-1}
\mathscr U_{i,k} D_{i,k}
\ov{\mathscr U_{p,k} D_{p,k}} \Bigg|^2\\
&\phantom{=}
\phantom{\frac{C}{t_n^4 n^2} \Bigg\{}
+
\sum_{\substack{k,l \in \mathbb Z, \\ |k| \neq |l|}}
x_k \left( \frac{ x_k w_k}{|c_k|^2} - 1 \right) 
I\{ \lambda_k \geq \alpha \gamma_k^\nu \} 
x_l \left( \frac{ x_l w_l}{|c_l|^2} - 1 \right) 
I\{ \lambda_l \geq \alpha \gamma_l^\nu \}  \\
&\phantom{=}
\phantom{\frac{C}{t_n^4 n^2} \Bigg\{ + \sum_{\substack{k,l \in \mathbb Z, \\ k \neq l}}}
\e \Bigg[
\Bigg(\sum_{\substack{i,p=1,\\ i \neq p}}^{n-1}
\mathscr U_{i,k} D_{i,k}
\ov{\mathscr U_{p,k} D_{p,k}}\Bigg) 
\Bigg(\sum_{\substack{i,p=1,\\ i \neq p}}^{n-1}
\ov{\mathscr U_{i,l} D_{i,l}}
\mathscr U_{p,l} D_{p,l} \Bigg)
\Bigg] \Bigg\}.
\end{align*}
Since $\mathscr U_{i,k} D_{i,k}$ and $\mathscr U_{p,k} D_{p,k}$ are stochastically independent for $p \neq i$, only the quadratic terms for $k \in \mathbb Z$ are relevant 
\begin{align*}
\sum_{\substack{i,p=1,\\ i \neq p}}^{n-1}
\e \Big|
\mathscr U_{i,k} D_{i,k}
\ov{\mathscr U_{p,k} D_{p,k}} \Big|^2
&=
\sum_{\substack{i,p=1,\\ i \neq p}}^{n-1}
\e |\mathscr U_{i,k} D_{i,k}|^2
\e |\mathscr U_{p,k} D_{p,k} |^2 \\
&=
(n-1)(n-2) \left(\e |\mathscr U_{1,k}|^2 \e |D_{1,k}|^2 \right)^2 \\
&\leq
C n^2 \left( \frac{w_k}{|c_k|^2} - \frac{1}{x_k} \right)^2.
\end{align*}
Under the assumptions of Theorem 2.1, this leads to
\begin{align*}
\e |\mathfrak V_{n,2}|^2
&\leq
\frac{C}{t_n^4}
\sum_{k \in \mathbb Z}
\left( \frac{ x_k w_k}{|c_k|^2} - 1 \right)^4 
I\{ \lambda_k \geq \alpha \gamma_k^\nu \} = o(1),
\end{align*}
and therefore
\[ \mathfrak V_{n,2} = o_P \left( 1 \right). \]

\subsection{Details for the proof of Proposition C.2}

It is shown in [1] and [10] that the conditional Lindeberg condition
follows from the unconditional Ljapunov condition. We will show in the following, that \[ \sum_{j=2}^n \e|Y_{n,j}|^4 = o(1) \]
and decompose
\[\sum_{j=2}^n \e|Y_{n,j}|^4=L_{n,1}+L_{n,2}+L_{n,3}+L_{n,4},\]
where
\begin{align*}
L_{n,1}
&=
\frac{1}{t_n^4 n^4} \sum_{j=2}^n 
\sum_{k \in \mathbb Z} 
\e \left|\mathscr U_{j,k} D_{j,k} Z_{n,j,k}\right|^4,\\
L_{n,2}&=
\frac{1}{t_n^4 n^4} \sum_{j=2}^n\sum_{\substack{k,l \in \mathbb Z,\\ |k| \neq |l|}} 
\e \left|\mathscr U_{j,k} D_{j,k} Z_{n,j,k} \ov{\mathscr U_{j,l} D_{j,l}
Z_{n,j,l}}\right|^2, \\
L_{n,3}&{=}
{\frac{1}{t_n^4 n^4} \sum_{j=2}^n }
\sum_{\substack{k,l,q \in \mathbb Z,\\ |k|,|l| \neq |q|, |k| \neq |l|}} 
\e \Big[|\mathscr U_{j,k} D_{j,k} Z_{n,j,k}|^2 
\mathscr U_{j,l} D_{j,l} Z_{n,j,l}
\ov{\mathscr U_{j,q} D_{j,q} Z_{n,j,q}}\Big], \\
L_{n,4}&{=}
{\frac{1}{t_n^4 n^4} \sum_{j=2}^n }
\sum_{\substack{k,l,p,q \in \mathbb Z,\\ |k|,|l|,|p| \neq |q|, \\ |k|,|l| \neq |p|, |k|
\neq |l|}} \e \Big[\mathscr U_{j,k} D_{j,k} Z_{n,j,k} 
\ov{\mathscr U_{j,l} D_{j,l} Z_{n,j,l}}
\mathscr U_{j,p} D_{j,p} Z_{n,j,p}
\ov{\mathscr U_{j,q} D_{j,q} Z_{n,j,q}}\Big].
\end{align*}
%
%
%
%
%
%
%
%
%
%
%
%
%
For $L_{n,1}$ we use, that for all $k \in \mathbb Z, n \in
\mathbb N, j \in \{1,\ldots,n\}$, $Z_{n,j,k}$ are stochastically independent of $\mathscr
U_{j,k} D_{j,k}$ and $\mathscr U_{j,k}$ are uncorrelated with $D_{j,k}$. Furthermore, the
fourth absolute moment of $\mathscr U_{j,k}$ is due to the centredness of $U$ and Lemma
B.1 uniformly bounded. The fourth absolute moment of $D_{j,k}$ can be
estimated using Assumption 3 and $(X,W) \in \mathcal F_{\eta}^4$ as
\begin{align}
\e |D_{j,k}|^4 
\leq 
C \left( \frac{\e |\langle W, \phi_k \rangle |^4}{|c_k|^4} 
+
\frac{\e |\langle X, \phi_k \rangle |^4}{x_k^4} \right)
\leq
C \eta \left( \frac{w_k^2}{|c_k|^4} 
+
\frac{1}{x_k^2} \right)
\leq
\frac{C \eta}{\alpha^2}. \label{Dik4}
\end{align} 
Again using similar arguments, we obtain
\begin{align}
\e \left| 
\mathscr U_{i_1,k} D_{i_1,k}\right|^2 
=
\e | \mathscr U_{i_1,k} |^2 \e |D_{i_1,k}|^2 
&\leq
C \left( \frac{w_k}{|c_k|^2} - \frac{1}{x_k} \right). \label{UDquad}
\end{align}
This results in
\begin{align}
&\e \Big|
\sum_{i=1}^{j-1}
\mathscr U_{i,k}
D_{i,k}
x_k I\{ \lambda_k \geq \alpha \gamma_k^\nu \} 
\Big|^4 \nn \\
&=
x_k^4 I\{ \lambda_k \geq \alpha \gamma_k^\nu \} 
\Bigg\{
\sum_{i=1}^{j-1}
\e |\mathscr U_{i,k}|^4
\e |D_{i,k}|^4
+
2 \sum_{1 \leq i_1 < i_2 \leq j-1}
\e |\mathscr U_{i_1,k} D_{i_1,k}|^2
\e |\mathscr U_{i_2,k} D_{i_2,k}|^2
\Bigg\} \nn \\
&\leq
\frac{Cn}{\alpha^2} x_k^4 I\{ \lambda_k \geq \alpha \gamma_k^\nu \} 
+
C n^2 x_k^2 
\left( \frac{x_k w_k}{|c_k|^2} - 1 \right)^2
I\{ \lambda_k \geq \alpha \gamma_k^\nu \}. \label{Znjk4}
\end{align}
Putting these results together, for $L_{n,1}$, we get
\begin{align*}
L_{n,1}
&=
\frac{1}{t_n^4 n^4} \sum_{j=2}^n 
\sum_{k \in \mathbb Z} 
\e |\mathscr U_{j,k}|^4 
\e |D_{j,k}|^4 
\e |Z_{n,j,k}|^4 \\
&\leq
\frac{C}{t_n^4 n^4 \alpha^2} \sum_{j=2}^n 
\sum_{k \in \mathbb Z} 
\e \Big|
\sum_{i=1}^{j-1}
\mathscr U_{i,k}
D_{i,k}
x_k I\{ \lambda_k \geq \alpha \gamma_k^\nu \} 
\Big|^4 \\
&\leq
\frac{C}{t_n^4 n \alpha^2}
\sum_{k \in \mathbb Z} 
x_k^2
I\{ \lambda_k \geq \alpha \gamma_k^\nu \} 
\left(
\frac{1}{n \alpha^2} x_k^2  
+
\left( \frac{x_k w_k}{|c_k|^2} - 1 \right)^2  
\right) \\
&=
o(1) \frac{1}{t_n^4} 
\left( \sum_{k \in \mathbb Z} 
 x_k^4
I\{ \lambda_k \geq \alpha \gamma_k^\nu \} 
+
\sum_{k \in \mathbb Z} 
 x_k^2
\left( \frac{x_k w_k}{|c_k|^2} - 1 \right)^2
I\{ \lambda_k \geq \alpha \gamma_k^\nu \}\right),
\end{align*}
where the first series converges due to Lemma B.1 and the second series either also converges or, if not, can be bounded by $C t_n^2$. 
%
%
%
%
%
%
%
%
%
%
%
%
%
\bigskip

Considering $L_{n,4}$, we use the stochastic independence of $Z_{n,j,k}$ and $\mathscr U_{j,l}
D_{j,l}$ for all $k,l \in \mathbb Z$, which results in
\begin{align*}
&\e \big[\mathscr U_{j,k} D_{j,k} Z_{n,j,k} 
\ov{\mathscr U_{j,l}} \ov{D_{j,l}} \ov{Z_{n,j,l}}
\mathscr U_{j,p} D_{j,p} Z_{n,j,p}
\ov{\mathscr U_{j,q}} \ov{D_{j,q}} \ov{Z_{n,j,q}}\big] \\
&=
\e \big[\mathscr U_{j,k} D_{j,k}  
\ov{\mathscr U_{j,l}} \ov{D_{j,l}} 
\mathscr U_{j,p} D_{j,p} 
\ov{\mathscr U_{j,q}} \ov{D_{j,q}} \big]
\e \big[ Z_{n,j,k} \ov{Z_{n,j,l}} Z_{n,j,p} \ov{Z_{n,j,q}}\big].
\end{align*}
The rest of the argumentation is just calculating the expectations using that
for all $j \in \{1,\ldots,n\}$, 
$D_{j,k}, D_{j,l}, D_{j,p}$ and $D_{j,q}$ are uncorrelated with $S_{j,m}$ for all $m \in
\mathbb Z \backslash \{m \in \mathbb Z: |m| = |k|,|l|,|p|,|q|\}$ and stochastically independent of $U_j$. Finally,
\begin{align}
\e [S_{j,k} D_{j,k}]
&=
\langle \beta, \phi_k \rangle
\e \left[
\langle \phi_k, X_j \rangle 
\left( 
\frac{\langle W_j, \phi_k \rangle}{c_k} - \frac{\langle X_j, \phi_k \rangle}{x_k}
\right)\right] =
\langle \beta, \phi_k \rangle
\left( \frac{c_k}{c_k} - \frac{x_k}{x_k} \right)=0 \label{SD}
\end{align}
and, in the same way, $\e [\ov{S_{j,k}}D_{j,k}]=\e [S_{j,k} \ov{D_{j,k}}]=0$, which gives $L_{n,4}=0$.
%
%
%
%
%
%
%
%
%
%
%
%
%
\bigskip

With similar arguments as above, we get
\begin{align*}
L_{n,2}
&=
\frac{1}{t_n^4 n^4} \sum_{j=2}^n \sum_{\substack{k,l \in \mathbb Z, \\ k \neq l}}
\e |\mathscr U_{j,k} D_{j,k} \ov{\mathscr U_{j,l}} \ov{D_{j,l}}|^2
\e | Z_{n,j,k} \ov{Z_{n,j,l}} |^2,
\end{align*}
which can be further bounded by using
\begin{align*}
\e \big|\ov{S_{j,k}} D_{j,k}\big|^2
&\leq
| \langle \beta, \phi_k \rangle |^2
\sqrt{\e \big|\langle X, \phi_k \rangle|^4 \e \big|D_{j,k}\big|^4 } \nn \\
&\leq
\sqrt{\eta}| \langle \beta, \phi_k \rangle |^2 x_k 
\left( \frac{\e |\langle W, \phi_k \rangle |^4}{|c_k|^4} +
\frac{\e |\langle X, \phi_k \rangle |^4}{x_k^4} \right)^{1/2} \nn \\
&\leq
C | \langle \beta, \phi_k \rangle |^2 x_k \left( \frac{w_k^2}{|c_k|^4} 
+
\frac{1}{x_k^2} \right)^{1/2}
\leq
\frac{C | \langle \beta, \phi_k \rangle |^2 x_k}{\alpha}
\end{align*}
and
\begin{align*}
&\e | Z_{n,j,k} \ov{Z_{n,j,l}} |^2 \\
&\leq
C x_k^2 x_l^2 
I\{ \lambda_k \geq \alpha \gamma_k^\nu \}
I\{ \lambda_l \geq \alpha \gamma_l^\nu \} (n-1)\\
&\phantom{=}
\Bigg\{
\Bigg[ 
\frac{C}{\alpha^2} | \langle \beta, \phi_k \rangle |^2 x_k |\langle \beta, \phi_l \rangle|^2 x_l 
+
\frac{C | \langle \beta, \phi_l \rangle |^2 x_l}{\alpha} 
\left( \frac{w_k}{|c_k|^2} - \frac{1}{x_k} \right)  \nn \\
&\phantom{=}
\phantom{C \big\{}
+
\frac{C | \langle \beta, \phi_k \rangle |^2 x_k}{\alpha} 
\left( \frac{w_l}{|c_l|^2} - \frac{1}{x_l} \right)
+
\left( \frac{w_k}{|c_k|^2} - \frac{1}{x_k} \right)
\left( \frac{w_l}{|c_l|^2} - \frac{1}{x_l} \right)
\Bigg]\\
&\phantom{=}
\phantom{\Big\{}
+
(n-2)
\left( \frac{w_k}{|c_k|^2} - \frac{1}{x_k} \right)
\left( \frac{w_l}{|c_l|^2} - \frac{1}{x_l} \right)
\Bigg\}.
\end{align*}
This results in
\begin{align*}
L_{n,2} 
&\leq
\frac{C}{t_n^4 (n \alpha^2)^2} \left(\sum_{k \in \mathbb Z} | \langle \beta, \phi_k
\rangle |^4 x_k^4\right)^2 
+
\frac{C}{t_n^2 n^2 \alpha^2} 
\sum_{l \in \mathbb Z} |\langle \beta, \phi_l \rangle |^4 x_l^4
+
\frac{C}{n^2} \\
&\phantom{=}
+
\frac{C}{t_n^4 n \alpha^2} \left(\sum_{k \in \mathbb Z}
|\langle \beta, \phi_k \rangle |^2 x_k^2 
\left( \frac{w_k x_k}{|c_k|^2} - 1 \right)
I\{ \lambda_k \geq \alpha \gamma_k^\nu \}
\right)^2 \\
&\phantom{=}
+
\frac{C}{t_n^2 n \alpha} \sum_{l \in \mathbb Z}
|\langle \beta, \phi_l \rangle |^2 x_l^2 
\left( \frac{w_l x_l}{|c_l|^2} - 1 \right)
I\{ \lambda_l \geq \alpha \gamma_l^\nu \}
+
\frac{C}{n} \\
&\leq
o\left(\frac{1}{t_n^4} + \frac{1}{t_n^2 n} 
\right)
+
\mathcal O \left(\frac1n +\frac{1}{n^2} \right) 
+
\frac{C}{t_n^2 n \alpha^2} 
\sum_{k \in \mathbb Z}
|\langle \beta, \phi_k \rangle |^4 x_k^4 
+
\frac{C}{t_n n \alpha} 
\sqrt{\sum_{k \in \mathbb Z}
|\langle \beta, \phi_k \rangle |^4 x_k^4 }
\\
&=
o\left(\frac{1}{t_n^4} + \frac{1}{t_n^2 n} 
+ \frac{1}{t_n^2}
+  \frac{1}{t_n \sqrt{n}}
\right)
+
\mathcal O \left(\frac1n +\frac{1}{n^2} \right) \\
&=
o(1),
\end{align*}
using the H\"older inequality and Lemma B.1.
%
%
%
%
%
%
%
%
%
%
%
%
%
%
\bigskip

For the summands in $L_{n,3}$, we get
\begin{align*}
&\e \big[|\mathscr U_{j,k} D_{j,k} Z_{n,j,k}|^2 
\mathscr U_{j,l} D_{j,l} Z_{n,j,l}
\ov{\mathscr U_{j,q} D_{j,q} Z_{n,j,q}}\big] \\
&=
\e \big[|\mathscr U_{j,k} D_{j,k} |^2 
\mathscr U_{j,l} D_{j,l} 
\ov{\mathscr U_{j,q} D_{j,q}} \big]
\e \big[ |Z_{n,j,k}|^2 Z_{n,j,l} \ov{Z_{n,j,q}}\big].
\end{align*}
The first expectation is
\begin{align*}
&\e \big[|\mathscr U_{j,k} D_{j,k} |^2 
\mathscr U_{j,l} D_{j,l} 
\ov{\mathscr U_{j,q} D_{j,q}} \big]  \\
&=
\left( \frac{w_k}{|c_k|^2} - \frac{1}{x_k} \right)
|\langle \beta, \phi_l \rangle |^2
|\langle \beta, \phi_q \rangle |^2  \\
&\phantom{=}\phantom{\Bigg(}
\e \left[ |\langle X_j, \phi_l \rangle |^2 
\left( \frac{\langle W_j, \phi_l \rangle}{c_l} - \frac{\langle X_j, \phi_l \rangle}{x_l}
\right) \right]
\e \left[ |\langle X_j, \phi_q \rangle |^2 
\left( \frac{\langle \phi_q, W_j \rangle}{\ov{c_q}} - \frac{\langle \phi_q, X_j
\rangle}{x_q} \right) \right], 
\end{align*}
while
\begin{align*}
&\e \big[ |Z_{n,j,k}|^2 Z_{n,j,l} \ov{Z_{n,j,q}}\big] \\
&=
x_k^2 x_l x_q
I\{ \lambda_k \geq \alpha \gamma_k^\nu \}
I\{ \lambda_l \geq \alpha \gamma_l^\nu \}
I\{ \lambda_q \geq \alpha \gamma_q^\nu \}
\sum_{i=1}^{j-1}
\e \big[|\mathscr U_{i,k} D_{i,k} |^2 
\mathscr U_{i,l} D_{i,l} 
\ov{\mathscr U_{i,q}} \ov{D_{i,q}} \big].
\end{align*}
Altogether, we have
\begin{align*}
&L_{n,3} \leq
\frac{1}{t_n^4 n^2}
\sum_{\substack{ k,l,q \in \mathbb Z, \\ |k|,|l| \neq |q|, |k| \neq |l|}} 
x_k^2 x_l x_q
I\{ \lambda_k \geq \alpha \gamma_k^\nu \}
I\{ \lambda_l \geq \alpha \gamma_l^\nu \}
I\{ \lambda_q \geq \alpha \gamma_q^\nu \} \\
&\phantom{=}
\phantom{\frac{1}{t_n^4 n^4}}
\left( \frac{w_k}{|c_k|^2} - \frac{1}{x_k} \right)^2
|\langle \beta, \phi_l \rangle |^4
|\langle \beta, \phi_q \rangle |^4  \\
&\phantom{=}
\phantom{\frac{1}{t_n^4 n^4}}
\left(\e \left[ |\langle X, \phi_l \rangle |^2 
\left( \frac{\langle W, \phi_l \rangle}{c_l} - \frac{\langle X, \phi_l \rangle}{x_l}
\right) \right]
\e \left[ |\langle X, \phi_q \rangle |^2 
\left( \frac{\langle \phi_q, W \rangle}{\ov{c_q}} - \frac{\langle \phi_q, X \rangle}{x_q}
\right) \right]\right)^2.
\end{align*}
The series can be bounded by $t_n^2$. Using the H\"older inequality for $l \in \mathcal K_n$, we have
\begin{align*}
\left(\e \Big[ |\langle X, \phi_l \rangle |^2 
\Big( \frac{\langle \phi_l, W \rangle}{\ov{c_l}} - \frac{\langle \phi_l, X \rangle}{x_l}
\Big) \Big]\right)^2
\leq
\e|\langle X, \phi_l \rangle |^4
\e \Big| D_{1,l}|^2\leq
\eta x_l^2
\left( \frac{w_l}{|c_l|^2} - \frac{1}{x_l} \right) 
\leq
\frac{C}{\alpha^2} x_l^2.
\end{align*} 
Finally, relying again on Assumption 3 and Lemma B.1, also
$L_{n,3}$ converges to 0 due to
\begin{align*}
L_{n,3}
\leq
\frac{C}{t_n^2 n^2}\left( \sum_{k \in \mathbb Z} 
|\langle \beta, \phi_k \rangle |^4 x_k \frac{x_k - \lambda_k}{\lambda_k}
\right)^{1/2} \leq
\frac{C}{t_n^2 n^2 \alpha^2}
\sum_{k \in \mathbb Z} 
|\langle \beta, \phi_k \rangle |^4 x_k (x_k - \lambda_k)
= o \left( \frac{1}{t_n^2 n} \right).
\end{align*}

\end{appendix}
%
%

\paragraph{Acknowledgments}
The authors would like to thank Jan Johannes for helpfull discussions on the different estimation techniques in the functional linear regression model with and without endogeneity.

\end{document}